\documentclass{amse-new}
\usepackage{color}
\usepackage{soul}
\usepackage{amsthm}
\usepackage{amsmath}
\theoremstyle{plain}
\usepackage{hyperref}
\usepackage{txfonts}
\hypersetup{hypertex=true,
colorlinks=true,
linkcolor=blue,
anchorcolor=blue,
citecolor=blue}
\numberwithin{equation}{section} 

\begin{document}

 \PageNum{1}
 \Volume{201x}{Sep.}{x}{x}
 \OnlineTime{August 15, 201x}
 \DOI{0000000000000000}

\abovedisplayskip 6pt plus 2pt minus 2pt \belowdisplayskip 6pt
plus 2pt minus 2pt
\def\vsp{\vspace{1mm}}
\def\th#1{\vspace{1mm}\noindent{\bf #1}\quad}
\def\proof{\vspace{1mm}\noindent{\it Proof}\quad}
\def\no{\nonumber}
\newenvironment{prof}[1][Proof]{\noindent\textit{#1}\quad }
{\hfill $\Box$\vspace{0.7mm}}
\def\q{\quad} \def\qq{\qquad}
\allowdisplaybreaks[4]


\AuthorMark{Wang Y. }                             

\TitleMark{Tangent measures of self-similar sets}  

\title{Tangent measures of self-similar sets satisfying the strong separation condition        
\footnote{Supported by the Fundamental Research Funds for the Central Universities (Grant No. 60201525020).}
}                 

\author{Yongtao Wang}     
    {School of Science, Northeast Forestry University, Harbin, 150040, Heilongjiang Province, China\\
    E-mail\,$:wangyongtao@nefu.edu.cn$  }

\maketitle%

\Abstract{This paper investigates tangent measures in the sense of Preiss for self-similar sets on ${{\mathbb{R}}^d}$ that satisfy the strong separation condition. Through the dynamics of ``zooming in'' on any typical point, we derive an explicit and uniform formula for the tangent measures associated with this category of self-similar sets on ${{\mathbb{R}}^d}$. Furthermore, for any self-similar set $C\subset{{\mathbb{R}}^d}$ under the open set condition instead of the strong separation condition, we find that the support of any tangent measure at each point $x\in C$ is one of the limit models at that point. Conversely, any limit model at each point $x\in C$
 is the support of one of the tangent measures at that point.}    

\Keywords{Self-similar sets, $s$-regular, Tangent measures, Limit models, $\omega$-limit set, The strong separation condition}        

\MRSubClass{28A33, 28A78, 28A80}      

\section{Introduction}\label{sec1}
Tangent measures were introduced by David Preiss in \cite{ref11} for the study of sets and measures, and they have turned out to be a powerful tool for studying some geometric properties of measures, e.g., rectifiability. Tangent measures contain information about the local structure of a given Radon measure in a similar but often more complicated way as the derivative of a function tells us about the local behaviour. Since then, scholars have begun to focus on the local structure of more general sets and measures, such as sets and measures with fractional dimensions. 

Over the past thirty years, the research on the local structure of fractals has received 
extensive attention. Many scholars have conducted research by introducing new concepts such as 
tangent measure distributions \cite{ref01,ref02,ref4,Py,Fur,Gav,Hoc1,Hoc2,Fer,Kem,Bar,ref03,ref04,ref05,ref06} and tangent sets \cite{ref3,Buc1,Buc2,Che,Fra1,Fra2,Käe} and have achieved remarkable progress.  
Among them, tangent measure distributions, which describe local behavior from a statistical and probabilistic perspective, have become the mainstream method in current research. They are particularly useful for tasks such as characterizing the uniformly scaling property of fractals \cite{Bar,Fer,Fur,Gav,Hoc1,Hoc2,Kem,Py}. 
In addition, 
tangent sets have intuitive geometric meanings when studying the local structure of fractals and are more easily accepted and applied by researchers. However, tangent measure distributions 
and tangent sets are difficult to effectively describe the local characteristic of each point, especially in non-typical points. 

Currently, there are relatively few works on using tangent measures to study fractals directly, see \cite{reflocal2023} for the latest research work. The main reason is that the construction of tangent measures depends on specific points and particular scaling paths, which makes analysis difficult and lacks a unified explicit expression. Moreover, even if the selected points are the same but the scaling paths are different, it may also lead to the non-uniqueness of tangent measures, thus increasing the complexity and difficulty of analysis.

 This paper investigates tangent measures for self-similar sets on ${{\mathbb{R}}^d}$  satisfying the strong separation condition and derives an explicit and uniform formula. In collaboration with other scholars, a similar problem has been discussed about two types of non-rotational self-similar sets under different separation conditions, namely, homogeneous Cantor sets and non-homogeneous Cantor sets \cite{ref16,ref016,Wan}. The distinction between them lies in whether the Lipschitz constants of the similitudes in the iterated function system that generates them are all equal (see Definition \ref{de:2.17}). 
 In this paper, we allow similitudes to include rotations, which undoubtedly increases the difficulty of the discussion. 
 
 To solve the problem of the explicit expression of tangent measures, we propose a method based on the ``limit models'' (see Definition \ref{de:3.1}), which directly connects the construction of tangent measures with the limit models. Through this method, for some fractals, not only can the explicit expression of the tangent measures at a single point be obtained mathematically, but also the local characteristic of typical and non-typical points can be uniformly processed. Our main result is as follows:
 \begin{theorem}\label{th:1.1}
Let $C\subset{{\mathbb{R}^d}}$ be a self-similar set satisfying the strong separation condition (see Definition \ref{de:2.2} and Definition \ref{de:2.4}).
Let $\mu= {{\cal H}^s}\lfloor_C$ with $s={\dim_H}(C)$. Then for any $x \in C$, we have
   $${\mathop{\rm Tan}\nolimits} (\mu ,x) = \{ {c{\cal H}^s}\lfloor_ F| F \in \omega({C^x}),c >0\},$$
where $\omega({C^x})$ denotes the class of ``tangent set'' of $C$ at $x$ (see Definition \ref{de:3.1}).
\end{theorem}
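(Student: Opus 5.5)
We need to characterize $\mathrm{Tan}(\mu,x)$. Recall that $\nu\in \mathrm{Tan}(\mu,x)$ iff $\nu\neq 0$ and there are $r_k\downarrow 0$, $c_k>0$ with $c_k T_{x,r_k\#}\mu \to \nu$ weakly, where $T_{x,r}(y)=(y-x)/r$. We expect the limit models $\omega(C^x)$ to be the Hausdorff-metric (locally, on compact sets) limits of the blow-ups $C^x_{r}=T_{x,r}(C)$ along sequences $r_k\to 0$ (Definition~\ref{de:3.1}). The plan is to combine three facts. First, $\mu$ is $s$-regular (indeed $0<\mathcal H^s(C)<\infty$ under the strong separation condition, and by self-similarity $\mu(B(y,r))\asymp r^s$ for $y\in C$, $0<r\le \operatorname{diam} C$). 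Second, push-forwards scale as $T_{x,r\#}\mu = r^{s}\,\mathcal H^s\lfloor_{T_{x,r}(C)}$. Third, by the strong separation condition, near $x$ the set $C$ looks locally like a similar copy of itself. The natural normalization is therefore $c_k = c\, r_k^{-s}$, which turns the problem into comparing $\mathcal H^s\lfloor_{C^x_{r_k}}$ with $\mathcal H^s\lfloor_F$.

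\textbf{Step 1 (structure of blow-ups).} Let $\delta>0$ be the minimal gap between the first-level pieces $S_i(C)$. For $x=\pi(i_1i_2\dots)$ and $r$ small, choose $n$ with $r_{i_1\cdots i_n}\delta \approx r R$. Then $C\cap B(x,rR)=S_{i_1\cdots i_n}(C)\cap B(x,rR)$, so
\[
C^x_r\cap B(0,R) = \bigl(\lambda\, O\,(C - y)\bigr)\cap B(0,R),
\]
with $\lambda = r_{i_1\cdots i_n}/r$ and $O$ orthogonal. By the choice of $n$, $\lambda$ ranges in a compact interval bounded away from $0$ for fixed $R$. Here $y=S_{i_1\cdots i_n}^{-1}(x)=\pi(\sigma^n i)$. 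By compactness of $O(d)$, of the scale interval, and of $C$, every sequence $r_k\to0$ has a subsequence along which $C^x_{r_k}\to F$ locally in the Hausdorff metric, and each such $F$ is locally a finite union of similar copies of $C$ (exactly $\lambda O(C-y)$ on each ball). This matches the description of $\omega(C^x)$ presumably given in Definition~\ref{de:3.1}.

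\textbf{Step 2 ($\supseteq$).} Take $F\in\omega(C^x)$ with $C^x_{r_k}\to F$. We show $r_k^{-s}T_{x,r_k\#}\mu\to\mathcal H^s\lfloor_F$. On $B(0,R)$ both sides are $\lambda_k^s\,(O_k)_\#\tau_\#\mathcal H^s\lfloor_{C}$, restricted to $B(0,R)$. Passing to a subsequence, $\lambda_k\to\lambda$, $O_k\to O$ and $y_k\to y$, so these measures converge weakly to the corresponding measure for $\lambda O(C-y)$, which is $\mathcal H^s\lfloor_F$ on $B(0,R)$. Boundary effects on $\partial B(0,R)$ are harmless because $\mu$ gives measure zero to spheres for a.e.\ $R$; alternatively we test with compactly supported $\varphi$ and choose $R$ large. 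Because the limit is identified independently of the subsequence, the whole sequence converges, and the factor $c$ is free.

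\textbf{Step 3 ($\subseteq$).} Let $c_kT_{x,r_k\#}\mu\to\nu\ne0$. By $s$-regularity, $r_k^{-s}T_{x,r_k\#}\mu(B(0,1))\asymp1$. Since $\nu\neq0$, comparing masses on a large ball forces $c_kr_k^s$ to be bounded and bounded away from $0$, so along a subsequence $c_kr_k^s\to c>0$. Applying Step 1 along a further subsequence gives $C^x_{r_k}\to F\in\omega(C^x)$, and Step 2 then yields $\nu=c\,\mathcal H^s\lfloor_F$.

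\textbf{Main obstacle.} The delicate point is Step 2's identification of the limit measure with $\mathcal H^s\lfloor_F$. Hausdorff convergence of sets does not by itself imply weak convergence of their $\mathcal H^s$ measures. This is why we use the exact local representation from the strong separation condition: the measures are pushforwards of one fixed measure under similarity maps with convergent parameters, so they converge. We must also check that the limit is consistent across radii $R$, i.e.\ that the representations chosen at different $R$ agree on overlaps. This follows because each is $\mathcal H^s$ restricted to the same set $F$, and $\mathcal H^s$ is intrinsic.
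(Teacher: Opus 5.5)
Your proof is correct and shares the paper's skeleton. The strong separation condition makes the blow-up on a ball an exact similar copy $\lambda O(C-y)$ with parameters in a compact set; this is the paper's Lemma \ref{le:3.61}. One then extracts subsequences along which scale, rotation and base point converge, and both inclusions follow by passing to subsequences and using uniqueness of weak limits.

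The genuine difference is how the limit measure is identified.

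The paper tests against indicator functions of half-open boxes. It proves $\mathcal{H}^s(F\cap D)=\lim_k\mathcal{H}^s(e^{t_{s_k}}C^x\cap D)$ (Proposition \ref{pr:5.2}, Corollaries \ref{co:3.9}, \ref{co:3.11}) and then approximates $f\in C_c(\mathbb{R}^d)$ by step functions (Theorem \ref{th:4.1}). Controlling mass near box faces is what forces it to use:
$\mathcal{H}^s(C\cap P)=0$ for hyperplanes $P$ (Lemma \ref{le:3.7}, from Mattila), the reduction of Remark \ref{re:1.3} to $C$ not lying in a hyperplane, the cube-continuity lemmas, and a diagonal argument.

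You instead write the rescaled measure on the ball as $\lambda_k^s(\Phi_k)_\#\mu$. Here $\mu$ is a single finite measure and the similitudes $\Phi_k\to\Phi$ converge uniformly on the compact set $C$. Convergence against continuous test functions is then immediate from uniform continuity. You need no boundary estimates, no hyperplane lemma and no dimension reduction.

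Your route also gives a bit more. You get convergence of the whole sequence $r_k^{-s}\mu_{x,r_k}$ rather than of a subsequence. You derive the normalization $c_kr_k^s\to c\in(0,\infty)$ directly from $s$-regularity and $\nu\neq 0$, instead of citing Lemma \ref{le:2.12}. In exchange, the paper's route yields setwise convergence on boxes, which the theorem does not need.

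One step you should write out is $F\cap\{|z|<R\}=\lambda O(C-y)\cap\{|z|<R\}$. It follows from the Kuratowski characterization of $\rho$-convergence (Lemma \ref{le:3.4}) together with uniform convergence $\Phi_k\to\Phi$ on $C$; this is the content of Lemma \ref{le:3.61}(3). It is only guaranteed on the open ball. So your second option for boundary effects is the one to use: choose $R$ with $\operatorname{spt}\varphi\subset\{|z|<R\}$.
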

  \begin{remark}\label{re:1.3}
  Suppose $C\subset{{\mathbb{R}}^d}$ is a self-similar set. If there exists a hyperplane $P$ such that $C \subset P$, we can restrict ourselves to the lower-dimensional space $P$, and repeat until it is no longer contained in any hyperplane. Therefore, without loss of generality, regarding Theorem \ref{th:1.1} we can assume that $C$ is not included in any hyperplane in ${{\mathbb{R}}^d}$.
 \end{remark}
 
It remains unclear whether Theorem \ref{th:1.1} holds under the weaker assumption of the open set condition instead of the strong separation condition, or for more general $s$-regular sets.  However, in the last part of this paper, for any $s$-regular Radon measure $\mu$ on ${{\mathbb{R}}^d}$ with  $C= \operatorname{spt}\mu$ (including any measure $\mu= {{\cal H}^s}\lfloor_C$, where $C\subset{{\mathbb{R}}^d}$ is a $s$-regular set), we show that the support of any tangent measure at each point $x\in C$ must be one of the limit models at that point. Conversely, any limit model at each point $x\in C$
 is the support of one of the tangent measures at that point (see Theorem \ref{th:1.2}).

\begin{theorem}\label{th:1.2}
Given any $s$-regular Radon measure $\mu$ on ${{\mathbb{R}}^d}$ with $C=\operatorname{spt}\mu$ (see Definition \ref{de:5.1}). Then for any $x \in C$, we have
\begin{align}\label{E：1}
\omega({C^x}) =\{\operatorname{spt} \nu|\nu  \in {\mathop{\rm Tan}\nolimits} (\mu,x)\},
\end{align} 
where $\operatorname{spt} {\nu}$ is the smallest closed set A such that
$\nu({{\mathbb{R}^d}}\backslash A) =0$. 
\end{theorem}
 \begin{remark}\label{re:1.4}
  Suppose $C\subset{{\mathbb{R}}^d}$ is a self-similar set satisfying the strong separation condition. Equation \eqref{E：1} can be easily derived from Theorem \ref{th:1.1}.
 \end{remark}

The rest of the paper is organized as follows:
\begin{itemize}
  \item In Section 2, we give the main definitions and fundamental properties, such as self-similar sets and tangent measures.
  \item In Section 3, we provide a detailed discussion of limit models.
  \item In Section 4, we prove Theorem \ref{th:1.1}.
  \item In Section 5, we prove Theorem \ref{th:1.2}.
\end{itemize}
\section{Preliminaries}
The main definitions and basic properties of this section are quoted from Mattila's geometric measure theory. For more details see Mattila \cite{ref10}.
\begin{definition}[\cite{ref10}; 4.13.Self-similar sets]\label{de:2.1}
A mapping $\varphi:{{\mathbb{R}}^d} \to {{\mathbb{R}}^d}$ is called a similitude on ${{\mathbb{R}}^d}$ if there is $r$, $0<r <1$, such that
  \begin{center}
    $\left| {\varphi(x) - \varphi(y)} \right| = r\left| {x - y} \right|$ 
  for $x,y \in {{\mathbb{R}}^d}$.
\end{center}
Similitudes are exactly those maps $\varphi$ which can be written as
\begin{center}
  $\varphi(x) = rg(x) + z$, $x \in {{\mathbb{R}}^d}$
\end{center}
for some $g \in O(d)$, $z \in {{\mathbb{R}}^d}$ and $0 < r < 1$. Here $O(d)$ denotes the set of all orthogonal transformations of ${{\mathbb{R}}^d}$.
\end{definition}
  \begin{definition}[see \cite{ref7,ref1}]\label{de:2.2}
  Let $\{ {\varphi _i}:{{\mathbb{R}}^d} \to {{\mathbb{R}}^d}\} _{i = 1}^m$ be a family of similitudes on ${{\mathbb{R}}^d}$, we say that $\{ {\varphi _i}\} _{i = 1}^m$ is an iterated function system (IFS) on ${{\mathbb{R}}^d}$. The unique non-empty compact set $C \subset {{\mathbb{R}}^d}$ which satisfies the set equation $$C = {\varphi _1}(C) \cup  \cdots  \cup {\varphi _m}(C)$$
 is called the self-similar set generated by the iterated function system (IFS) $\{ {\varphi _i}\} _{i = 1}^m$. (for the existence and uniqueness see \cite{ref7})
 \end{definition}
 \begin{definition}\label{de:2.17}
Let $C\subset{{\mathbb{R}}^d}$ be a self-similar set generated by an IFS $\{ {\varphi _i}= {\beta_i}  x + {c_i}\} _{i = 1}^{m}$, where $0 < {\beta_i}  < 1$ and ${c_i} \in {{\mathbb{R}}^d}$ for $i = 1,2, \cdots ,m$.
 We call $C$ a homogeneous Cantor set if ${\beta _1} = {\beta _2} =  \cdots  = {\beta _{m}}$. Otherwise, we call $C$ a non-homogeneous Cantor set. 
 \end{definition}
 Note that the class of non-homogeneous Cantor sets is a special class of self-similar sets: similitudes in its general IFS admit different radios, and rotations are not allowed.
  \begin{definition}\label{de:2.4}
 Let $C\subset{{\mathbb{R}}^d}$ be a self-similar set generated by an IFS $\{ {\varphi _i}\} _{i = 1}^{m}$. We say that $\{ {\varphi _i}\} _{i = 1}^m$ (or $C$) satisfies the open set condition (OSC) if there is a non-empty open set $O\subset {{\mathbb{R}}^d}$ such that $\mathop  \cup \limits_{i = 1}^m {\varphi _i}(O) \subset O$ and ${\varphi _i}(O) \cap {\varphi _j}(O) = \emptyset $ for $i \ne j =1, \cdots,m$. And we say that $\{ {\varphi _i}\} _{i = 1}^m$ (or $C$) satisfies the strong separation condition (SSC) if the different parts ${\varphi _i}(C)$ are disjoint.
\end{definition}
\begin{definition}\label{de:2.19}
 Let $C\subset{{\mathbb{R}}^d}$ be a self-similar set generated by an IFS $\{ {\varphi _i}= {\beta_i}g_i (x) + {c_i}\} _{i = 0}^{m-1}$, where $0 < {\beta_i}  < 1$, $g_i\in O(d)$ and ${c_i}\in {{\mathbb{R}^d}}$ for $i = 0,1, \cdots ,m - 1$. Let ${\Sigma^ + } = \{ {x_0}{x_1}...:{x_i} = 0,1, \cdots ,m - 1\}$. Given a finite sequence ${x_0} \ldots {x_{p - 1}}$ in $\{ 0,1, \cdots ,m - 1\} $,
  set
  $${\varphi _{{x_0}\ldots {x_{p- 1}}}} ={\varphi _{{x_0}}} \circ  \cdots  \circ {\varphi _{{x_{p - 1}}}}$$
 and
 \[
\begin{aligned}
{C_{{x_0} \ldots {x_{p - 1}}}} = {\varphi _{{x_0} \ldots {x_{p - 1}}}}(C) =& {\beta _{{x_0}}}{\beta _{{x_1}}} \cdots {\beta _{{x_{p - 1}}}}{g_{{x_0}}} \circ{g_{{x_1}}}\circ  \cdots \circ {g _{{x_{p - 1}}}}(C) + \\
& \sum\limits_{i = 0}^{p - 1} {{\beta _{{x_0}}}{\beta _{{x_1}}} \cdots {\beta _{{x_{i - 1}}}}}{g _{{x_0}}}\circ{g _{{x_1}}}\circ \cdots \circ{g _{{x_{i - 1}}}}({{c_{{x_i}}}}).
\end{aligned}
\]
  For a fixed infinite sequence $\mathop x\limits_ -   = {x_0}{x_1}...\in {\Sigma^ + }$, we have $diam$ $ ({C_{{x_0}\ldots{x_{p - 1}}}} ) \to 0$ as $p \to \infty $.
  Hence $\mathop  \cap \limits_{p \ge 1} {C_{{x_0} \ldots {x_{p - 1}}}} $ only contains a single point $x=\sum\limits_{i = 0}^\infty  {{\beta _{{x_0}}}{\beta _{{x_1}}} \cdots {\beta _{{x_{i - 1}}}}}{g _{{x_0}}}\circ{g _{{x_1}}} \circ\cdots\circ {g _{{x_{i - 1}}}}({{c_{{x_i}}}})$ of $C$.\\
  For any $\mathop x\limits_ -   = {x_0}{x_1}...\in {\Sigma^ + }$ 
  and $p\in{{\mathbb{N}}_+}$,
  let ${\lambda ^p}(\mathop x\limits_ -  ) = {\beta _{{x_0}}}{\beta _{{x_1}}} \cdots {\beta _{{x_{p - 1}}}}$, ${h^p}(\mathop x\limits_ -  ) = {g_{{x_0}}}\circ{g _{{x_1}}} \circ\cdots \circ{g _{{x_{p - 1}}}}$, ${\lambda _p}(\mathop x\limits_ -  ) = {({\lambda ^p}(\mathop x\limits_ -))^{ - 1}}$ and ${h _p}(\mathop x\limits_ -  ) = {({h ^p}(\mathop x\limits_ -))^{ - 1}}$. 
  Let ${\lambda ^p}(\mathop x\limits_ -  ) ={\lambda _p}(\mathop x\limits_ -  ) =1$ and  ${h ^p}(\mathop x\limits_ -  )={h _p}(\mathop x\limits_ -  ) =I_d$ when $ p=0$, where $I_d$ is an identity transformation. 
 If a mapping $\pi:\Sigma^+\to C$ satisfies that for each $\mathop x\limits_ -   = {x_0}{x_1}...\in {\Sigma^ + }$, we have
 \begin{align}\label{E：2.2}
x=\pi (\mathop x\limits_ -  ) =\mathop {\lim }\limits_{p \to \infty } {\varphi _{{x_0} \ldots {x_{p - 1}}}}(C)= \sum\limits_{i = 0}^\infty  {{\lambda ^i}(\mathop x\limits_ -){h^i}(\mathop x\limits_ - )({c_{{x_i}}} }),
\end{align}
then $\pi:\Sigma^+\to C$ is called the coding mapping of $C$ with respect to $\{\varphi_i\}_{i = 0}^{m - 1}$.\\
 Since $\mathop  \cup \limits_{i = 0}^{m - 1} {\varphi _i}(C) = C$, we have $C = \{ \pi (\mathop x\limits_ -  ) | {\mathop x\limits_ -   = {x_0}{x_1}... \in {\Sigma^ + }}\} $.
At this time, $\pi$ is a surjective mapping. That is, for each point $x$ in $C$, one can always find an element $\mathop x\limits_ - $ in $\Sigma^+$ corresponding to it. If $\pi(\mathop x\limits_ - ) = x$, then $\mathop x\limits_ - $ is called a coding of $x$. Hence each point of $C$ gets coded in this way.\\
  We define a metric on the space $\Sigma ^ + $ by
  $$
  {d^ * }(\mathop x\limits_ - , \mathop y\limits_ - )=2^{-n} \ \text { where } n=\min \left\{k: x_k \neq y_k\right\}.
  $$
 Suppose $\{ {\varphi _i}\} _{i = 0}^{m-1}$ satisfies the strong separation condition, then $\pi $ is a homeomorphic mapping which implies that every point of  $C$ has a unique coding. 
\end{definition}
\begin{remark}\label{re:2.12}
Suppose $\{ {\varphi _i}\} _{i = 0}^{m-1}$ satisfies the strong separation condition.
   For any $\mathop x\limits_ -   = {x_0}{x_1}... \in {\Sigma^ + }$, we define $\sigma :{\Sigma ^ + } \to {\Sigma ^ + }$ by
  $$\sigma ({x_0}{x_1}{x_2} \cdots ) = {x_1}{x_2} \cdots$$
and $\pi  \circ {\sigma ^n}:{\Sigma ^ + } \to C$ by
$$x_n^* = \pi  \circ {\sigma ^n}(\underline x ),$$
 where
   $ {{\sigma}^n}(\underline x)=\underbrace {\sigma \circ \sigma \circ  \cdots  \circ \sigma}_{n-times}(\underline x )$ for any $n \in {{\mathbb{N}}_+ }$ and $ {{\sigma}^n}(\underline x)=\underline x$ for $n=0$.\\
  Notice that $${\varphi _{{x_0}}}(\pi ({x_1}{x_2}....)) = \pi ({x_0}{x_1}{x_2}....).$$
We define
 $$S:{\varphi _0}(C) \cup {\varphi _1}(C) \cup \cdots \cup {\varphi _{m-1}}(C) \to C $$
 by
\begin{equation}	
 S(x)=\varphi _i^{ - 1}(x)\; if \;x \in {\varphi _i}(C)
 \end{equation}
for $i = 0,1,\cdots,m - 1$. \\
Let $ {S^n}(x)=\underbrace {S \circ S \circ  \cdots  \circ S}_{n-times}(x) $
for any $n \in {{\mathbb{N}}_ + }$ and  $ {S^n}(x)=x$ for $n=0$.\\
 The action of $S$ on  $x$ is equivalent to eliminating one bit of the coding of $x$ from the left. That is,  $${S^n}(x)=\pi  \circ {\sigma ^n} \circ {\pi}^{-1} (x)$$
 for any $n \in {{\mathbb{N}} }$.\\
Let $\lambda _r^{^p}(\mathop x\limits_ -) = {{{\lambda ^p}(\mathop x\limits_ -)} \over {{\lambda ^r}(\mathop x\limits_ -)}}$
and
 $h _r^{^p}(\mathop x\limits_ -) = {({{h ^r}(\mathop x\limits_ -)})^{-1}}\circ {{h ^p}(\mathop x\limits_ -)} $
for any $r, p \in {{\mathbb{N}} }$. Then for any $\mathop x\limits_ -  \in {\Sigma^ + }$ with $\pi (\mathop x\limits_ -) = x$,
we have
\begin{align}\label{E：2.4}
x_n^* ={S^n}(x) = \sum\limits_{i = 0}^\infty  {\lambda _n^{^{i + n}}}(\mathop x\limits_ -){h _n^{^{i + n}}}(\mathop x\limits_ -)({c_{{x_{i + n}}}}).
\end{align}
\end{remark}
  \begin{theorem}[\cite{ref10}; 4.14.Theorem]\label{th:2.5}
  Let $C\subset{{\mathbb{R}}^d}$ be a self-similar set generated by an IFS $\{ {\varphi _i}\} _{i = 1}^{m}$, each with ratio ${r_i}=Lip({\varphi _i})$. If $\{ {\varphi _i}\} _{i = 1}^m$ satisfies the open set condition, then $0 < {{\cal H}^s}(C) < \infty $, whence $s={\dim _H}(C)$, where $s$ is the unique number for which
  \begin{equation}
  \sum\limits_{i = 1}^m {r_i^s}  = 1.
  \end{equation}
  Moreover, there are positive and finite numbers $a$ and $b$ such that $a{(2r)^s} \le {{\cal H}^s}(C \cap B(x,r)) \le b{(2r)^s}$ for $x \in C, 0 < r \le 1$.
 \end{theorem}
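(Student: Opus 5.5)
The plan follows the classical Hutchinson–Moran argument: build the natural self-similar measure, get two-sided mass bounds on balls, and transfer them to ${\cal H}^s\lfloor_C$. First, since $t\mapsto\sum r_i^t$ is continuous and strictly decreasing from $m$ to $0$, there is a unique $s$ with $\sum_{i=1}^m r_i^s=1$. For a finite word $w=w_1\dots w_k$ write $r_w=r_{w_1}\cdots r_{w_k}$, $\varphi_w=\varphi_{w_1}\circ\cdots\circ\varphi_{w_k}$ and $C_w=\varphi_w(C)$.

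\emph{Upper bound for ${\cal H}^s(C)$.} The level-$k$ cylinders $C_w$, $|w|=k$, cover $C$. Their diameters are at most $(\max_i r_i)^k\operatorname{diam}C\to0$, and
\[
\sum_{|w|=k}(\operatorname{diam}C_w)^s=(\operatorname{diam}C)^s\Big(\sum_i r_i^s\Big)^k=(\operatorname{diam}C)^s .
\]
Hence ${\cal H}^s(C)\le(\operatorname{diam}C)^s<\infty$.

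\emph{The self-similar measure.} Let $\mu$ be the unique Borel probability measure with $\mu=\sum_i r_i^s\,\varphi_{i\#}\mu$. One gets it as the push-forward under the coding map $\pi$ of the Bernoulli measure with weights $r_i^s$, or by a contraction argument in the Wasserstein metric. It satisfies $\mu(C_w)\le r_w^s$, and $\operatorname{spt}\mu=C$.

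\emph{Key lemma (this is where OSC enters, and the main obstacle).} Fix $0<r\le 1$ and let $Q_r$ be the stopping set of words $w$ with $r_w\le r<r_{w^-}$, where $w^-$ is $w$ with its last letter deleted. Then $r_{\min}r<r_w\le r$ for $w\in Q_r$, the sets $\varphi_w(O)$, $w\in Q_r$, are pairwise disjoint, and $C\subset\overline O$ (because $C$ is the attractor and $\overline O$ is invariant). Choose a ball $B(a,\rho_1)\subset O$ and $\rho_2$ with $O\subset B(a,\rho_2)$; one may assume $O$ bounded by intersecting with a large ball. Then each $\varphi_w(O)$ contains a ball of radius $r_{\min}\rho_1 r$ and has diameter at most $2\rho_2 r$. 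If $\overline{\varphi_w(O)}$ meets $B(x,r)$, it lies in $B(x,(1+2\rho_2)r)$. Comparing Lebesgue volumes, the number of such $w$ is at most
\[
N=\big((1+2\rho_2)/(r_{\min}\rho_1)\big)^d,
\]
a bound independent of $x$ and $r$. The delicate points are getting disjointness from $\varphi_i(O)\cap\varphi_j(O)=\emptyset$ together with $\varphi_i(O)\subset O$ (two incomparable words first differ at some position), and checking that $C\cap B(x,r)\subset\bigcup\{C_w: w\in Q_r,\ C_w\cap B(x,r)\ne\emptyset\}$.

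\emph{Consequences.} The lemma gives $\mu(B(x,r))\le N r^s$. By the mass distribution principle, ${\cal H}^s(C)\ge \mu(C)/(2^{s}N)>0$ up to normalisation, so $0<{\cal H}^s(C)<\infty$ and $s=\dim_H C$. To pass from $\mu$ to ${\cal H}^s\lfloor_C$, note that
\[
{\cal H}^s(C)\le\sum_i{\cal H}^s(\varphi_i(C))=\sum_i r_i^s{\cal H}^s(C)={\cal H}^s(C).
\]
Since ${\cal H}^s(C)$ is positive and finite, equality forces the overlaps ${\cal H}^s(\varphi_i(C)\cap\varphi_j(C))$ to vanish. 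Therefore $\nu={\cal H}^s\lfloor_C/{\cal H}^s(C)$ satisfies the same self-similarity equation, and by uniqueness $\nu=\mu$.

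The upper ball estimate ${\cal H}^s(C\cap B(x,r))\le b(2r)^s$ then follows from the lemma. For the lower estimate, take $x\in C$ and follow a coding of $x$ to the first $k$ with $r_w\operatorname{diam}C\le r$, where $w$ is the length-$k$ prefix. Then $x\in C_w\subset B(x,r)$ and $r_w\ge r_{\min}r/\max(\operatorname{diam}C,1)$, so
\[
\mu(B(x,r))\ge \mu(C_w)=r_w^s\ge a'r^s .
\]
For the last equality, $\mu(C_w)\ge r_w^s$ holds because distinct same-length cylinders overlap in $\mu$-null sets by the previous step. This gives constants $a,b$ valid for all $x\in C$ and $0<r\le1$.
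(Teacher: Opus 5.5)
The paper gives no proof of this statement. It is quoted from Mattila (Theorem 4.14), which rests on Hutchinson's argument, and your outline follows that standard route: natural measure, volume count for the disjoint sets $\varphi_w(O)$, $w\in Q_r$, mass distribution principle, and identification of ${\cal H}^s\lfloor_C$ with the natural measure. The covering bound, the counting lemma, the null-overlap argument and the lower ball estimate are fine. However, one inequality is used in the wrong direction, and it sits exactly at the crux.

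Right after constructing $\mu$ you assert $\mu(C_w)\le r_w^s$. You then get $\mu(B(x,r))\le Nr^s$ by summing $\mu(C_w)$ over the at most $N$ words $w\in Q_r$ with $C_w\cap B(x,r)\ne\emptyset$. The construction only gives the opposite inequality for free: with $\mu=\pi_\#P$, $P$ the Bernoulli measure with weights $r_i^s$, the cylinder $[w]$ lies in $\pi^{-1}(C_w)$, so $\mu(C_w)\ge P([w])=r_w^s$. The bound $\mu(C_w)\le r_w^s$ is not automatic. For the overlapping maps $x/2$, $x/2+1/4$, $x/2+1/2$ on $\mathbb{R}$ one finds $\mu([0,1/2])=1/2>1/3=r_1^s$.

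Holding for all $w$, that bound amounts to distinct cylinders of equal length meeting in $\mu$-null sets. You only derive this at the end, from $0<{\cal H}^s(C)<\infty$ and $\nu=\mu$. But ${\cal H}^s(C)>0$ itself comes from the mass distribution principle applied to $\mu(B(x,r))\le Nr^s$, so the argument as written is circular. Your closing remark has the same reversal: $\mu(C_w)\ge r_w^s$ needs no overlap information.

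The fix is to estimate on the code space. Every infinite word has exactly one prefix in $Q_r$, and $\pi([w])=C_w$. Hence $\pi^{-1}(B(x,r))\subset\bigcup\{[w]: w\in Q_r,\ C_w\cap B(x,r)\ne\emptyset\}$, which gives $\mu(B(x,r))\le\sum r_w^s\le Nr^s$ by your counting lemma. Equivalently, iterate the invariance equation to $\mu=\sum_{w\in Q_r}r_w^s\,\varphi_{w\#}\mu$ and note that $\mu(\varphi_w^{-1}(B(x,r)))=0$ unless $C_w\cap B(x,r)\ne\emptyset$.

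With that change the rest of your argument goes through. The upper estimate for ${\cal H}^s(C\cap B(x,r))$ can even be read off directly from subadditivity and ${\cal H}^s(C_w)=r_w^s{\cal H}^s(C)$, without identifying $\nu$ with $\mu$.
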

 
  \begin{definition}[\cite{ref10}\label{de:2.7}; 6.8.Definition]
  Let $0 \le s < \infty $, $A \subset {{\mathbb{R}}^d}$ and $a \in {{\mathbb{R}}^d}$. The upper and lower s-densities of $A$ at $a$ are defined by \[{\theta ^{ * s}}(A,a) = \mathop {\lim \sup }\limits_{r \to 0}  {(2r)^{ - s}}{{\cal H}^s}(A \cap B(a,r)),\] \[\theta _ * ^s(A,a) = \mathop {\lim \inf }\limits_{r \to 0} {(2r)^{ - s}}{{\cal H}^s}(A \cap B(a,r)).\]
  If they agree, their common value is called the s-dimensional density of $A$ at $a$ and denoted by \[{\theta ^s}(A, a) = {\theta ^{ * s}}(A, a) = \theta _ * ^s(A, a).\]
  \end{definition}
\begin{remark}[\cite{ref10}; 6.4.Remark(5)]\label{re:2.8}
 As noted in Theorem \ref{th:2.5} more can be said about the densities of a self-similar set C with the open set condition. In particular, there are positive and finite numbers $a$ and $b$ such that $0 < a \le \theta _ * ^s(C,x) \le {\theta ^{ * s}}(C,x) \le b < \infty $ for $x \in C$.
\end{remark}
\begin{definition}[\cite{ref10}; 1.21.Definition]\label{de:2.9}
Let $\mu ,{\mu _1},{\mu _2}, \ldots $ be Radon measures on ${{\mathbb{R}}^d}$. We say that a sequence $\{ {\mu _n}\} $ converges weakly to $\mu $, denoted by
  $${\mu _n}\mathop  \to \limits^w \mu ,$$ if $$\mathop {\lim }\limits_{n \to \infty } \int {\varphi d} {\mu _n} = \int {\varphi d} \mu \; for \; all \; \varphi  \in {C_c}({{\mathbb{R}}^d}),$$
  where ${C_c}({{\mathbb{R}}^d})$ is the space of compactly supported continuous real-valued functions on ${{\mathbb{R}}^d}$.
\end{definition}
\begin{definition}[\cite{ref10}; 14.1.Definition]\label{de:2.10}
Let $\mu $ be a Radon measure on ${{\mathbb{R}}^d}$. We say that $\nu$ is a tangent measure of $\mu $ at a point $a \in {{\mathbb{R}}^d}$ if there exist sequences $\{ {r_i}\} $ and $\{ {c_i}\} $ of positive numbers such that ${r_i} \to 0$ and $${c_i}{{{\mu _{a,{r_i}}}}} \mathop  \to \limits^w \nu \; as \; i \to \infty ,$$
  where ${\mu _{a,{r_i}}}(A) = \mu (a + {r_i}A)$ for any $\mu $-measurable set  $A \subset {{\mathbb{R}}^d}$.
\end{definition}
The set of all such tangent measures is denoted by $\operatorname{Tan} (\mu, a)$.

\begin{lemma}[\cite{ref016}; Lemma 2.12]\label{le:2.12}
For a self-similar set $C\subset{{\mathbb{R}^d}}$ with the open set condition and each $x \in C$, let $\mu  = {{\cal H}^s} \lfloor _C$ with $s = \dim C$, then
\[
\begin{aligned}
\operatorname{Tan} (\mu ,x) &= \{ \nu: \exists \; {r_i} \to {\rm{0}}, c>0\; such \; that\; c\mathop {{{\mu _{a,{r_i}}}} \over {{r_i}^s}}\mathop  \to \limits^w \nu\}\\
&=\{ \nu: \exists \; {r_i} \to {\rm{0}}, c>0\; such \; that\; c\mathop {{\cal H}^s}\lfloor _{{{{C^x}} \over {{r_i}}}}\mathop  \to \limits^w \nu\}.
\end{aligned}
\]
\end{lemma}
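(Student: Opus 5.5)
We have to prove Lemma \ref{le:2.12}. The statement contains two equalities, and each one reduces to a normalisation argument.

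\textbf{First equality.} The inclusion ``$\supseteq$'' is immediate: if $c\,r_i^{-s}\mu_{x,r_i}\to\nu$ weakly, then Definition \ref{de:2.10} applies with $c_i=c\,r_i^{-s}$, so $\nu\in\operatorname{Tan}(\mu,x)$.

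For ``$\subseteq$'', let $\nu\in\operatorname{Tan}(\mu,x)$ with $c_i\mu_{x,r_i}\to\nu$. We adopt the convention that tangent measures are nonzero (Mattila's definition requires this), so $\nu\neq0$. Choose $R>0$ with $\nu(B(0,R))>0$.
\begin{itemize}
\item Weak convergence gives $\limsup_i c_i\mu(B(x,(R+1)r_i))\ge\nu(U(0,R+1))>0$.
\item It also gives $\limsup_i c_i\mu(B(x,(R+1)r_i))\le\nu(B(0,R+1))<\infty$.
\item Theorem \ref{th:2.5} gives $a\,(2(R+1)r_i)^s\le\mu(B(x,(R+1)r_i))\le b\,(2(R+1)r_i)^s$ for large $i$.
\end{itemize}
Combining these, the quantities $c_ir_i^s$ stay bounded between two positive constants, at least along a subsequence. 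Passing to a further subsequence, $c_ir_i^s\to c\in(0,\infty)$. Then
$$c\,r_i^{-s}\mu_{x,r_i}=\frac{c}{c_ir_i^s}\,c_i\mu_{x,r_i}\to\nu,$$
because the scalar factor tends to $1$ and the measures converge weakly. This proves the first equality.

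\textbf{Second equality.} This is a change of variables, with $C^x:=C-x$. For any Borel set $A$,
$$\mu_{x,r}(A)=\mathcal H^s(C\cap(x+rA))=\mathcal H^s\big((C-x)\cap rA\big)=r^s\,\mathcal H^s\Big(\tfrac{C^x}{r}\cap A\Big).$$
The last step uses the scaling law $\mathcal H^s(rE)=r^s\mathcal H^s(E)$. Hence $r_i^{-s}\mu_{x,r_i}=\mathcal H^s\lfloor_{C^x/r_i}$ identically, and the two descriptions coincide.

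The only step needing care is the two-sided bound on $c_ir_i^s$. It relies on the density bounds of Theorem \ref{th:2.5} holding at every point $x\in C$ for all small radii, together with the nonzero and finite mass of $\nu$ on a suitable ball, handled with open/closed balls as above.
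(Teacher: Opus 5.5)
The paper gives no proof of this lemma; it is quoted from \cite{ref016}, so there is no internal route to compare with. Your argument is a correct, self-contained proof, and it is the standard normalisation argument:
\begin{itemize}
\item Weak convergence, together with the two-sided bound of Theorem~\ref{th:2.5} at the point $x$, traps $c_ir_i^s$ between two positive constants. A subsequence then makes it converge to some $c\in(0,\infty)$.
\item The second equality is the exact identity $r^{-s}\mu_{x,r}=\mathcal H^s\lfloor_{C^x/r}$. It follows from translation invariance and $s$-homogeneity of $\mathcal H^s$.
\end{itemize}

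\textbf{Nonzero convention.} Your requirement $\nu\neq0$ is not merely a convention; the lemma needs it. Definition~\ref{de:2.10}, as written in the paper, omits it. Under that literal definition, $0\in\operatorname{Tan}(\mu,x)$: take $c_i=1/i$, and use $\mathcal H^s(C)<\infty$. But $0$ does not belong to the right-hand side, so the lemma would be false without the nonzero requirement.

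\textbf{Gap in the inclusion $\supseteq$.} Once you adopt the nonzero convention, this inclusion is not quite ``immediate''. You must also check that any weak limit $\nu$ of $c\,r_i^{-s}\mu_{x,r_i}$ is nonzero. This is one line. By upper semicontinuity on the compact ball and the lower bound in Theorem~\ref{th:2.5},
$$\nu(B(0,1))\ge\limsup_i c\,r_i^{-s}\mu(B(x,r_i))\ge c\,a\,2^s>0.$$
Add this line to the proof.

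\textbf{Minor sharpening.} Weak convergence gives a $\liminf$ inequality on the open ball $U(0,R+1)$, not only a $\limsup$ one. So your lower bound on $c_ir_i^s$ holds for all large $i$, and no subsequence is needed at that step.
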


\section{Properties of limit models in ${{\mathbb{R}}^d}$}

 Inspired by the limit models of Tim Bedford and Albert M. Fisher, we apply limit models (see Definition \ref{de:3.1}) to any closed set in ${{\mathbb{R}}^d}$.
  In this section, we prove some properties of limit models. For example, Corollary \ref{co:3.4} will give an equivalent definition of limit models of self-similar sets in ${{\mathbb{R}}^d}$. Some properties for Hausdorff measures on limit models of self-similar sets in ${{\mathbb{R}}^d}$ under the strong separation condition are given in Proposition \ref{pr:5.2} and its corollaries.  
  In the proof of Proposition \ref{pr:5.2} and its corollaries, Lemma \ref{le:3.61}, Lemma \ref{le:3.7},  Lemma \ref{le:4.} and Lemma \ref{le:5.} play a key role.

 \subsection{General definition and facts in ${{\mathbb{R}^d}}$}
 
\begin{definition}[see \cite{ref9}]\label{de:2.13}
 Let $\Gamma  = \{ F \subset {{\mathbb{R}}^d}:F$ is closed${\rm{\}}}$ (including the empty set $\emptyset $).
 The notations $ \mathcal{G}({{\mathbb{R}}^d})$ and $\mathcal{K}({{\mathbb{R}}^d})$, or, if there is no ambiguity simply $\mathcal{G}$ and $\mathcal{K}$ denote the classes of the subsets respectively open and compact in ${{\mathbb{R}}^d}$. 
 Let
   $$ {\Gamma ^B} = \{ F:F \in \Gamma, F \cap B = \emptyset \},$$
 $$
   {\Gamma _B} = \{ F:F \in \Gamma, F \cap B \ne \emptyset \}$$
 for any subset $B$ of ${{\mathbb{R}}^d}$.
Throughout the article, the space $\Gamma$ is topologized by the topology $\rho$, generated by the two families ${\Gamma}^K$, $K\in \mathcal{K}$, and ${\Gamma}_{G}$, $G\in \mathcal{G}$.
What's more, by putting
\begin{equation}\label{eq:3.1} 
\Gamma _{{G_1},{G_2}, \cdots ,{G_n}}^K = {\Gamma ^K} \cap {\Gamma _{{G_1}}} \cap  \cdots  \cap {\Gamma _{{G_n}}}
\end{equation}
for $K \in \mathcal{K}, n$ integer $\geqslant 0$ and $G_1, \ldots, G_n \in \mathcal{G}$, the corresponding class of subsets of $\Gamma$ is a base for the topology $\rho$ on $\Gamma$. 
Notice that $\Gamma _G^\emptyset = \Gamma_G$ and for $n=0$ the set \eqref{eq:3.1} is ${\Gamma}^K$, 
so that the sets ${\Gamma}^K$, $K\in \mathcal{K}$, and ${\Gamma}_{G}$, $G\in \mathcal{G}$, belong to this base,
as well as ${\Gamma ^\emptyset } = \Gamma $ and ${\Gamma _\emptyset } = \emptyset$. The corresponding topology $\rho$ is known as the Hit-and-Miss topology. The topology space $(\Gamma,\rho)$ has particularly good properties, see Remark \ref{re:2.14}, Lemma \ref{le:3.4} and Lemma \ref{Le:3.5}.
\end{definition}

\begin{remark}[see \cite{ref9}; Theorem 1-2-1]\label{re:2.14} 
  The topology space $(\Gamma,\rho)$ is compact, Hausdorff, and admits a countable base. Hence it is also sequentially compact.
\end{remark}

\begin{lemma}[see \cite{ref9}; Theorem 1-2-2]\label{le:3.4}
A sequence $\{ {F_n}\} \subset \Gamma $ converges to $F$ in $ \Gamma$ by $\rho $ (we might as well write it down as ${F_n}\mathop  \to \limits^\rho  F$ or $F = (\rho )\mathop {\lim }\limits_{n \to \infty } {F_n}$) if and only if the following two conditions hold:

(i). For any $x \in F$, there exists a sequence $\{ {x_n}\} $ such that ${x_n} \in {F_n}$ for each $ n \in {\mathbb{N_+}}$ and $\mathop {\lim }\limits_{n \to \infty } {x_n} = x$.

(ii). For any subsequence ${\{ {n_k}\} _{k \in {\mathbb{N_+}}}}$ and any sequence ${\{x_{{n_k}}\} _{k \in {\mathbb{N}}}}$ with ${x_{{n_k}}} \in {F_{{n_k}}}$, if  $\{x_{{n_k}}\}$ converges, then $\mathop {\lim }\limits_{k \to \infty } {x_{{n_k}}} \in F$.
\end{lemma}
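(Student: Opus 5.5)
The statement is the classical characterization of convergence in the Fell (hit-and-miss) topology on closed subsets of $\mathbb{R}^d$, i.e. Kuratowski–Painlevé convergence. I would prove it directly from the base \eqref{eq:3.1}. Since the sets $\Gamma^K$ and $\Gamma_G$ form a subbase, $F_n\overset{\rho}{\to}F$ holds if and only if two things are true. First, for every open $G$ with $F\cap G\neq\emptyset$, eventually $F_n\cap G\neq\emptyset$. Second, for every compact $K$ with $F\cap K=\emptyset$, eventually $F_n\cap K=\emptyset$. I will show that the first property is equivalent to (i) and the second to (ii).

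\emph{(i) from hit-convergence.} Fix $x\in F$ and $k\in\mathbb{N}_+$, and apply the first property to the open ball $G=B(x,1/k)$. This gives $N_k$ such that $F_n\cap B(x,1/k)\neq\emptyset$ for $n\ge N_k$; we may take $N_1<N_2<\cdots$. For $N_k\le n<N_{k+1}$, choose $x_n\in F_n\cap B(x,1/k)$. For $n<N_1$, pick any $x_n\in F_n$.

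One technical point needs care: $F_n$ might be empty for small $n$. Since the statement requires $x_n\in F_n$ for \emph{every} $n$, I will note that eventually $F_n\neq\emptyset$ and treat finitely many indices as irrelevant. I expect this bookkeeping to be the only real wrinkle in the proof.

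\emph{Hit-convergence from (i).} Given $x\in F\cap G$, take $x_n\to x$ with $x_n\in F_n$. Since $G$ is open, $x_n\in G$ eventually.

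\emph{(ii) from miss-convergence.} Suppose $x_{n_k}\in F_{n_k}$ and $x_{n_k}\to y$ with $y\notin F$. Because $F$ is closed, some closed ball $K=\overline{B}(y,\varepsilon)$ is disjoint from $F$. By the second property, $F_{n}\cap K=\emptyset$ eventually. This contradicts $x_{n_k}\in K$ for all large $k$.

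\emph{Miss-convergence from (ii).} Suppose $K$ is compact with $K\cap F=\emptyset$, but $F_{n_k}\cap K\neq\emptyset$ along some subsequence. Pick $x_{n_k}$ in $F_{n_k}\cap K$. By compactness of $K$, a further subsequence converges to some $y\in K$. By (ii), $y\in F$, which contradicts $K\cap F=\emptyset$.

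Combining the four implications gives the lemma. Compactness of $K$, used in this last step, is exactly what the miss-sets $\Gamma^K$ require.
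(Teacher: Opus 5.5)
Your proof is correct, and it covers more than the paper does: the paper gives no argument for this lemma and only cites Matheron (Theorem 1-2-2). Your route is the standard proof of that result. You reduce $\rho$-convergence to eventual membership in every subbasic neighbourhood $\Gamma_G$ and $\Gamma^K$ of $F$. You then match the hit sets with (i) and the miss sets with (ii), using compactness of closed balls in one direction and sequential compactness of $K$ in the other.

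The one correction concerns the empty-$F_n$ issue, which you call bookkeeping. It is more than that: as literally written, the forward direction of the lemma is false. Take $F_1=\emptyset$ and $F_n=F=\{0\}$ for $n\ge 2$. Then $F_n\mathop \to \limits^\rho F$, yet no sequence with $x_1\in F_1$ exists. Condition (i) has to be read as ``$x_n\in F_n$ for all sufficiently large $n$'', which is the usual Kuratowski lower-limit formulation. You should say explicitly that this corrected statement is what you prove. Your converse step, (i) implies hit-convergence, only uses the tail of the sequence, so it is unaffected.
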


\begin{lemma}\label{Le:3.5} 
Suppose ${F_n}\mathop\to \limits^\rho F$ for any ${F_n},F \in \Gamma $. Then for any non-empty bounded open set $U\subset{{\mathbb{R}}^d}$,  there exists a  closed set ${F^ * }$ and a subsequence $\{ {n_k}\} $
  such that
  $$\rho( {F_{{n_k}}} \cap  \overline U,{F^ * })\to 0,$$
  $$F \cap U  \subset {F^ * } \subset F \cap \overline U$$
and
$$F \cap U= {F^*} \cap U,$$
where $\overline U$ is the closure of $U$.
\end{lemma}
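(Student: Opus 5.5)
The plan is to obtain $F^*$ by compactness of $(\Gamma,\rho)$, and then read off the inclusions from the convergence criterion in Lemma \ref{le:3.4}. For the second inclusion I would argue directly with the sub-basic open sets of $\rho$.

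\emph{Construction of $F^*$.} For each $n$ the set $F_n\cap\overline U$ is closed, possibly empty, so $\{F_n\cap\overline U\}\subset\Gamma$. By Remark \ref{re:2.14}, $(\Gamma,\rho)$ is sequentially compact. Hence there are a subsequence $\{n_k\}$ and a closed set $F^*\in\Gamma$ with $F_{n_k}\cap\overline U\mathop\to\limits^\rho F^*$, which is the first assertion.

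\emph{The inclusion $F^*\subset F\cap\overline U$.} Take $y\in F^*$. By Lemma \ref{le:3.4}(i), applied to the convergent sequence $F_{n_k}\cap\overline U$, there are points $x_k\in F_{n_k}\cap\overline U$ with $x_k\to y$. Since $\overline U$ is closed, $y\in\overline U$. Since $x_k\in F_{n_k}$ and $F_n\mathop\to\limits^\rho F$, Lemma \ref{le:3.4}(ii) gives $y\in F$.

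\emph{The inclusion $F\cap U\subset F^*$.} This is the step needing care. The natural idea is to take $x\in F\cap U$ and, by Lemma \ref{le:3.4}(i) for $F_n\to F$, points $x_n\in F_n$ with $x_n\to x$. Because $U$ is open, $x_n\in U\subset\overline U$ for all large $n$. However, $F_{n_k}\cap\overline U$ might be empty for small $k$, so we cannot directly produce a full sequence as Lemma \ref{le:3.4}(i) demands.

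To avoid this I argue by contradiction using the topology itself. Suppose $x\notin F^*$. Since $F^*$ is closed, choose a compact ball $K=\overline{B(x,\varepsilon)}$ with $K\cap F^*=\emptyset$. Then $F^*\in\Gamma^K$, which is $\rho$-open, so $F_{n_k}\cap\overline U\in\Gamma^K$ for all large $k$. But $x_{n_k}\in F_{n_k}\cap\overline U\cap K$ for large $k$, since $x_{n_k}\to x$ and $x_{n_k}\in U$ eventually. This is a contradiction, so $x\in F^*$.

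\emph{The equality.} Intersecting $F\cap U\subset F^*\subset F\cap\overline U$ with $U$ gives $F\cap U\subset F^*\cap U\subset F\cap U$, hence $F\cap U=F^*\cap U$.

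The only real obstacle is the third step, the bookkeeping of possibly empty early terms. It is handled by the miss-set argument rather than by constructing sequences.
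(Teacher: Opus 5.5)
Your proposal is correct and follows the paper's proof. Both obtain $F^*$ from sequential compactness of $(\Gamma,\rho)$. Both get $F^*\subset F\cap\overline U$ by applying Lemma \ref{le:3.4}(i) to $F_{n_k}\cap\overline U\to F^*$ and then (ii) to $F_{n_k}\to F$, and both finish by intersecting with $U$.

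The only deviation is in proving $F\cap U\subset F^*$. The paper simply applies Lemma \ref{le:3.4}(ii) to $F_{n_k}\cap\overline U\to F^*$ along the tail of indices on which $x_{n_k}\in\overline U$. Since (ii) is stated for subsequences, the empty early terms you worried about are no obstacle. Your miss-set argument with $\Gamma^K$ is correct and just re-derives that instance of (ii) directly from the topology.
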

\begin{prof}
  Since ${F_{n}} \cap  \overline U\subset \Gamma $ and $(\Gamma,\rho)$ is  sequentially compact by Remark \ref{re:2.14}, then there is a closed set ${F^ * }$ and a subsequence $\{ {n_{k}}\} $ such that
$$\rho( {F_{{n_k}}} \cap  \overline U,{F^ * })\to 0.$$
Since ${F_{n}}\mathop  \to \limits^\rho  F$, we have
$${F_{{n_k}}}\mathop  \to \limits^\rho  F.$$
By Lemma \ref{le:3.4} ($i$), for any $ y \in F \cap U$, there exists a sequence ${x_{{n_{k}}}} \in {F_{n_k}}$ such that $\mathop {\lim }\limits_{k \to \infty } {x_{{n_{k}}}} = y$.
When $k$ is big enough, we have ${x_{{n_{k}}}} \in {F_{{n_k}}} \cap  \overline U$.
As a result, there exists a sequence ${x_{{n_{k}}}} \in {F_{{n_k}}} \cap  \overline U$ such that $\mathop {\lim }\limits_{k \to \infty } {x_{{n_{k}}}} = y$.
By Lemma \ref{le:3.4} ($ii$), we have $y\in {F^*}$.
Hence
$$F \cap U\subset {F^*}.$$
Similarly, since ${F_{{n_k}}} \cap \overline U\mathop  \to \limits^\rho  {F^*}$ and ${F_{{n_k}}}\mathop  \to \limits^\rho  F$, it can be concluded that  ${F^*} \subset F$ by Lemma \ref{le:3.4}.\\
Since $\rho( {F_{{n_k}}} \cap  \overline U,{F^ * })\to 0$, we have ${F^*} \subset  \overline U$.\\
Hence
$${F^*} \subset F \cap  \overline U.$$
As a result, we have
$$F \cap U\subset {F^*} \subset F \cap  \overline U.$$
So we have
 $$F \cap U= {F^*} \cap U.$$
 \qed
\end{prof}

  \begin{definition}
[\cite{ref10}\label{de:3.6}; the orthogonal group 3.5.] The orthogonal group $O(d)$ consists of all linear maps $g: \mathbb{R}^d \rightarrow \mathbb{R}^d$ preserving the inner product,
$$
g(x) \cdot g(y)=x \cdot y \quad \text { for all } x, y \in \mathbb{R}^d
$$
or equivalently preserving the distance,
$$
|g(x)-g(y)|=|x-y| \quad \text { for all } x, y \in \mathbb{R}^d.
$$
(The equivalence is easy to check.) Then $O(d)$ is a compact subspace of the metric space of all linear maps $\mathbb{R}^d \rightarrow \mathbb{R}^d$ equipped with the usual metric
$$
\tilde d(g, h)=\|g-h\|=\mathop {\sup }\limits_{|x| \ne 0} {{|g(x) - h(x)|} \over {\left| x \right|}}=\sup _{|x|=1}|g(x)-h(x)|.
$$
\end{definition}
Note that $g(x-y)=g(x)-g(y)$ and $g(kx)=kg(x)$ for any $x,y\in {{\mathbb{R}^d}}$, any $k\in {\mathbb{R}}$ and any $g\in O(d)$.

\begin{proposition}\label{pr:5}
For any $g_n,g \in O(d)$ satisfying  $g_n\mathop  \to \limits^{\tilde d}g$, then\\
(1). $d(g_n(x),g(x))\to 0$ for any $x\in {{\mathbb{R}^d}}$.\\
(2). $d(g_n({x_n}),g(x))\to 0$ for any ${x_n},x\in {{\mathbb{R}^d}}$ satisfying ${x_n}\to x$.\\
(3). $\rho (g_n(G^{x_n}),g(G^x))  \to 0$ for any nonempty compact set $G\in \Gamma$ and any ${x_n},x\in {{\mathbb{R}^d}}$ satisfying ${x_n}\to x$, where ${G^x} = \{ y - x\left| { y \in G\} } \right.$.
\end{proposition}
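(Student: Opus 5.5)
The three parts build on each other. (1) is immediate from the operator norm, (2) follows from (1) and the isometry property, and (3) reduces to (2) through the sequential characterisation of $\rho$-convergence in Lemma \ref{le:3.4}.

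\emph{Part (1).} For $x\ne 0$ we have, by the definition of $\tilde d$ in Definition \ref{de:3.6},
\[
d(g_n(x),g(x))=|g_n(x)-g(x)|\le \|g_n-g\|\,|x|=\tilde d(g_n,g)\,|x|\to 0 .
\]
The case $x=0$ is trivial.

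\emph{Part (2).} Split with the triangle inequality and use that $g_n$ is linear and an isometry:
\[
|g_n(x_n)-g(x)|\le |g_n(x_n)-g_n(x)|+|g_n(x)-g(x)| = |x_n-x|+|g_n(x)-g(x)| .
\]
The first term tends to $0$ by hypothesis and the second by (1).

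\emph{Part (3).} Here $\rho(A_n,A)\to 0$ means $A_n\to A$ in the Hit-and-Miss topology. The sets $g_n(G^{x_n})$ and $g(G^x)$ are compact and nonempty, hence in $\Gamma$. I will verify conditions (i) and (ii) of Lemma \ref{le:3.4} with $F_n=g_n(G^{x_n})$ and $F=g(G^x)$.

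For (i), take $z\in F$ and write $z=g(y-x)$ with $y\in G$. Put $z_n=g_n(y-x_n)\in F_n$. Since $y-x_n\to y-x$, part (2) gives $z_n\to z$.

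For (ii), take a subsequence and points $z_{n_k}=g_{n_k}(y_k-x_{n_k})\in F_{n_k}$ with $y_k\in G$, and assume $z_{n_k}\to z$. By compactness of $G$, pass to a further subsequence with $y_{k_j}\to y\in G$. Then $y_{k_j}-x_{n_{k_j}}\to y-x$, and part (2), applied along this subsequence of $g_n$ (which still converges to $g$), gives $z_{n_{k_j}}\to g(y-x)\in F$. Limits are unique, so $z=g(y-x)\in F$.

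The only step that needs care is this extraction of a convergent subsequence of the $y_k$. It is exactly where compactness of $G$ is used: for an unbounded closed set the preimages could escape to infinity. With (i) and (ii) established, Lemma \ref{le:3.4} yields $F_n\to F$ in $\rho$.
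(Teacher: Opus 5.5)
Your proof is correct and follows the paper's argument essentially step for step. You use the operator-norm bound for (1), the triangle inequality plus the isometry property for (2), and verify conditions (i) and (ii) of Lemma \ref{le:3.4} for (3), using compactness of $G$ to extract a convergent subsequence of the $y_k$. The only cosmetic difference is that you apply (2) directly to $y-x_n\to y-x$, whereas the paper first uses linearity to split $g_n(y-x_n)-g(y-x)$ into the two terms $g_n(y)-g(y)$ and $g_n(x_n)-g(x)$.
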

\begin{prof}
(1). If $x=0$, then $d(g_n(x),g(x))=0$. If $x\ne0$, since $$
\tilde d(g_n, g)=\|g_n-g\|=\mathop {\sup }\limits_{|x| \ne 0} {{|g_n(x) - g(x)|} \over {\left| x \right|}}\to 0,
$$
we have $d(g_n(x),g(x))\to 0$.\\
(2). Since ${x_n}\to x$ and $g_n\mathop  \to \limits^{\tilde d}g$, we have
\begin {align*}
d(g_n({x_n}),g(x))\le 
&\left| {{g_n}({x_n}) - {g_n}(x)} \right| + \left| {{g_n}(x) - g(x)} \right| \\
=& \left| {{x_n} - x} \right| + \left| {{g_n}(x) - g(x)} \right| \to 0.
\end{align*}
(3). By Lemma \ref{le:3.4}, we can obtain
$\rho (g_n(G^{x_n}),g(G^x))  \to 0$. The proof is as follows.\\
On the one hand, 
for any $y\in G$ with $g(y - x)\in g(G^x)$, then 
$g_n(y-x_n)\in g_n(G^{x_n})$
and
$$\left| {{g_n}(y - {x_n}) - g(y - x)} \right| \le \left| {{g_n}(y) - g(y)} \right| + \left| {{g_n}({x_n}) - g(x)} \right| \to 0.$$
On the other hand, for any $y_{n_k}\in G$ with $g_{n_k}(y_{n_k}-x_{n_k})\in g_{n_k}(G^{x_{n_k}})$, suppose $\{g_{n_k}(y_{n_k}-x_{n_k})\}$ is a convergent sequence. Since $G$ is compact, there exists a convergent sub-sequence $\{y_{n_{k_l}}\}\subset \{y_{n_k}\}$ and a positive number $y\in G$ such that
$y_{n_{k_l}}\to y$.\\
By Proposition \ref{pr:5} (2), we have 
$$\left| {{g_{{n_{{k_l}}}}}({y_{{n_{{k_l}}}}} - {x_{{n_{{k_l}}}}}) - g(y - x)} \right| \le \left| {{g_{{n_{{k_l}}}}}({y_{{n_{{k_l}}}}}) - g(y)} \right| + \left| {{g_{{n_{{k_l}}}}}({x_{{n_{{k_l}}}}}) - g(x)} \right| \to 0.$$
Since $\{g_{n_k}(y_{n_k}-x_{n_k})\}$ is a convergent sequence,
 $g_{n_k}(y_{n_k}-x_{n_k})\to g(y - x)\in g(G^x)$.
Hence $$\mathop {\lim }\limits_{k \to \infty } {g_{{n_k}}}({y_{{n_k}}} - {x_{{n_k}}}) \in g({G^x}).$$
This completes the proof. \qed
\end{prof}

\begin{definition}[see \cite{ref3,ref16,ref016,Wan}]\label{de:3.1}
Suppose $G \subset {{\mathbb{R}}^d}$ is a non-empty closed set. For each $x \in G$, let ${G^x} = \{ y - x\left| { y \in G\} } \right.$. The $\omega$-limit set of ${G^x}$ (or $G$ at the point $x$) is defined by $ \omega({G^x}) = \{ F \in \Gamma :\exists$ positive number $ {t_k} \to \infty $ such that $ {e^{{t_k}}}{G^x}\mathop  \to \limits^\rho  F\} $ (or equivalently defined as $ \omega({G^x}) = \{ F \in \Gamma :\exists$ positive number $ {r_k} \to 0 $ such that $ {{{G^x}} \over {{r_k}}}\mathop  \to \limits^\rho  F\} $). If $F \in \omega({G^x})$ for some $x\in G$, then we call $F$ a limit model for $G$. The set $\Omega  = \Omega (G) = \mathop  \cup \limits_{x \in G} \omega({G^x})$ is called the set of limit models for $G$.  

By Remark \ref{re:2.14}, we know that $\omega({G^x}) \ne \emptyset$. By Lemma \ref{le:3.4}, if $ \{{e^{{t_k}}}{G^x}\} $ converges by $\rho$, then the convergence limit of $ \{{e^{{t_k}}}{G^x}\} $ is unique. In addition, for any $F\in\omega({G^x})$, we have $0\in F$. Let $T = \{F\in\Gamma:0\in F\}$, then $\Omega\subset T$, and it is not difficult to prove that $T$ is a compact set.
\end{definition}

\begin{proposition}\label{pr:3.3}
  Given any  non-empty closed set $G \subset {{\mathbb{R}}^d}$ and any IFS $\{ {\varphi _i}= {\beta_i}g_i (x) + {c_i}\} _{i = 0}^{m-1}$, where $0 < {\beta_i}  < 1$, $g_i\in O(d)$ and ${c_i}\in {{\mathbb{R}^d}}$ for $i = 0,1, \cdots ,m - 1$, then for any $y\in G$ and any $\mathop x\limits_ -   = {x_0}{x_1}...$ $\in {\Sigma^ + }$, we have
  \begin{center}
    $\omega({G^y}) = \{ F \in \Gamma :\exists$ $ \alpha  > 0,$ positive number ${n_k} \to \infty $ such that $\alpha {{\lambda _{{n_k}}}(\mathop x\limits_ - )}{G^y}\mathop  \to \limits^\rho  F\} $.
\end{center}
  \end{proposition}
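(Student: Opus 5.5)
We prove the two inclusions separately, using only Definition \ref{de:3.1}, the compactness of $(\Gamma,\rho)$ from Remark \ref{re:2.14}, and the sequential characterization of Lemma \ref{le:3.4}. Fix $y\in G$ and $\mathop x\limits_ - \in\Sigma^+$. Put $\beta_{\min}=\min_i\beta_i$ and $\beta_{\max}=\max_i\beta_i$. Then $\lambda_n=\lambda_n(\mathop x\limits_ -)$ is increasing in $n$, tends to $\infty$, and satisfies $1/\beta_{\max}\le \lambda_{n+1}/\lambda_n\le 1/\beta_{\min}$. The right-hand side of the statement will be denoted $\omega'$.

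\textbf{The inclusion $\omega'\subset\omega(G^y)$.} Suppose $\alpha\lambda_{n_k}G^y\mathop\to\limits^\rho F$ with $n_k\to\infty$. Set $t_k=\log(\alpha\lambda_{n_k})$. Since $\lambda_{n_k}\to\infty$, we have $t_k\to\infty$, and $e^{t_k}G^y=\alpha\lambda_{n_k}G^y$. Hence $F\in\omega(G^y)$ directly from Definition \ref{de:3.1}.

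\textbf{The inclusion $\omega(G^y)\subset\omega'$.} Suppose $e^{t_k}G^y\mathop\to\limits^\rho F$ with $t_k\to\infty$.
\begin{enumerate}
\item For each $k$, let $n_k$ be the largest $n$ with $\lambda_n\le e^{t_k}$. Then $n_k\to\infty$, and the ratio $\alpha_k=e^{t_k}/\lambda_{n_k}$ lies in $[1,1/\beta_{\min})$.
\item Pass to a subsequence along which $\alpha_k\to\alpha\in[1,1/\beta_{\min}]$; in particular $\alpha>0$.
\item By Remark \ref{re:2.14}, pass to a further subsequence along which $\lambda_{n_k}G^y\mathop\to\limits^\rho F'$ for some $F'\in\Gamma$.
\item Show $\alpha_kA_k\mathop\to\limits^\rho \alpha F'$, where $A_k=\lambda_{n_k}G^y$. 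This is the scaling analogue of Proposition \ref{pr:5}(3), with the orthogonal maps replaced by dilations $\alpha_k\to\alpha>0$.
\item Uniqueness of $\rho$-limits then gives $F=\alpha F'$. Hence $(1/\alpha)F=F'$ is a $\rho$-limit of $\lambda_{n_k}G^y$. Moreover $F=\alpha F'$ is itself a limit of $\alpha\lambda_{n_k}G^y$, since $\alpha\lambda_{n_k}G^y\to\alpha F'$ by the same scaling lemma with a constant factor. Therefore $F\in\omega'$.
\end{enumerate}

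\textbf{Proof of the scaling lemma in step 4.} We check the two conditions of Lemma \ref{le:3.4}.
\begin{itemize}
\item Condition (i): for $z\in F'$, choose $z_k\in A_k$ with $z_k\to z$. Then $\alpha_kz_k\to\alpha z$.
\item Condition (ii): if $\alpha_{k_j}z_{k_j}\to w$ with $z_{k_j}\in A_{k_j}$, then $z_{k_j}=\alpha_{k_j}^{-1}(\alpha_{k_j}z_{k_j})\to w/\alpha$, using $\alpha>0$. Lemma \ref{le:3.4}(ii) for $A_k\to F'$ gives $w/\alpha\in F'$, that is, $w\in\alpha F'$.
\end{itemize}
This step is the only real obstacle. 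It relies crucially on $\alpha$ being bounded away from $0$ and $\infty$, which is exactly what the bounded-ratio property of $\lambda_n$ in step 1 provides.
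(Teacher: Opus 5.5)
Your proof is correct and follows the paper's argument: the same bracketing $\lambda_{n_k}\le e^{t_k}<\lambda_{n_k+1}$, a subsequence with $e^{t_k}/\lambda_{n_k}\to\alpha\in[1,\beta_{\min}^{-1}]$, Lemma \ref{le:3.4} to handle the rescaling, and $t_k=\ln(\alpha\lambda_{n_k})$ for the converse inclusion. The one difference is that your compactness detour through $F'$ is not needed: applying your scaling lemma directly to $e^{t_k}G^y\to F$ with factors $\alpha/\alpha_k\to 1$ gives $\alpha\lambda_{n_k}G^y\to F$ at once, which is exactly the step the paper calls easy.
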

\begin{prof}
  Fix $y$ and $\mathop x\limits_ - $ as in the statement.\\
  On the one hand, for any $ F \in \omega({G^y}) $, there exists positive number
${t_k} \to \infty $ such that ${e^{{t_k}}}{G^y}\mathop  \to \limits^\rho  F$.\\
 For each $k$, there exists a unique integer ${n_k}\in {\mathbb{N_+}}$ such that $${e^{{t_k}}} \in [{{\lambda _{{n_k}}}(\mathop x\limits_ - )},{{\lambda _{{n_{k}}+1}}(\mathop x\limits_ - )}).$$
 As a result, $\{ \frac{{{e^{{t_k}}}}}{{{{{\lambda _{{n_k}}}(\mathop x\limits_ - )}}}}\} $ is a bounded sequence. Hence there exists a convergent subsequence,  still denoted by  $\{ \frac{{{e^{{t_k}}}}}{{{{\lambda _{{n_k}}}(\mathop x\limits_ - )}}}\} $ for short, and a positive number $\alpha $ such that
 $$\mathop {\lim }\limits_{k \to \infty } \frac{{{e^{{t_k}}}}}{{{{{\lambda _{{n_k}}}(\mathop x\limits_ - )}}}} =\alpha \in [1,{\beta _{\min }} ^{-1}],$$
 where ${\beta _{\min }} = \mathop {\min }\limits_{0 \le i \le m - 1} ({\beta _i})$.\\
  Since ${e^{{t_k}}}{G^y}\mathop  \to \limits^\rho  F$, by Lemma \ref{le:3.4} we easily have $\alpha {\beta _{{n_k}}}(\mathop x\limits_ - ){G^y}\mathop  \to \limits^\rho  F$, $\alpha  \in [1,{\beta _{\min }}^{-1}]$. \\
  On the other hand, if $\alpha {\lambda _{{n_k}}}(\mathop x\limits_ - ){G^y}\mathop  \to \limits^\rho  F$, let ${t_k} = \ln (\alpha {\lambda _{{n_k}}}(\mathop x\limits_ - ))$, we have ${e^{{t_k}}}{G^y}\mathop  \to \limits^\rho  F$.\\
  As a result, we have 
  $$\omega({G^y}) = \{ F \in \Gamma :\exists\; \alpha  > 0, \;positive\; number \;{n_k} \to \infty \; such \; that \; \alpha {\lambda_{{n_k}}}(\mathop x\limits_ - ){G^y}\mathop  \to \limits^\rho  F\} .$$
  \qed
  \end{prof}

  \begin{corollary}\label{co:3.4} 
  Let $C\subset{{\mathbb{R}}^d}$ be a self-similar set generated by an IFS $\{ {\varphi _i}= {\beta_i}g_i (x) + {c_i}\} _{i = 0}^{m-1}$, where $0 < {\beta_i}  < 1$, $g_i\in O(d)$ and ${c_i}\in {{\mathbb{R}^d}}$ for $i = 0,1, \cdots ,m - 1$. For any $\mathop x\limits_ -   = {x_0}{x_1}...$ $\in {\Sigma^ + }$, we have
  \begin{center}
    $\omega({C^x}) = \{ F \in \Gamma :\exists$ $ \alpha  > 0,$ positive number ${n_k} \to \infty $ such that $\alpha {{\lambda _{{n_k}}}(\mathop x\limits_ - )}{C^x}\mathop  \to \limits^\rho  F\} $,
\end{center}
 where $x=\pi (\mathop x\limits_ -  )$.
\end{corollary}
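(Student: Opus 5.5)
This is a direct specialization of Proposition \ref{pr:3.3}, so the plan is simply to check that its hypotheses are met. Take $G=C$, which is a non-empty compact, hence closed, subset of ${{\mathbb{R}}^d}$. Take the given IFS $\{\varphi_i\}_{i=0}^{m-1}$ and the given coding $\mathop x\limits_- \in\Sigma^+$. For the base point, take $y=x=\pi(\mathop x\limits_-)$. Since $\pi$ maps $\Sigma^+$ onto $C$ (Definition \ref{de:2.19}), we have $x\in C$, so $y\in G$ as Proposition \ref{pr:3.3} requires. The proposition then gives exactly the displayed identity, with ${\lambda_{n_k}}(\mathop x\limits_-)$ computed from the same sequence $\mathop x\limits_-$.

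For completeness I would also recall the mechanism behind Proposition \ref{pr:3.3}, since this is where the only substantive point sits.

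\emph{Inclusion $\supseteq$.} Given $\alpha>0$ and $n_k\to\infty$ with $\alpha\lambda_{n_k}(\mathop x\limits_-)C^x \to F$ in $\rho$, put $t_k=\ln(\alpha\lambda_{n_k}(\mathop x\limits_-))$. Then $t_k\to\infty$, because $\lambda_{n}(\mathop x\limits_-)\ge \beta_{\max}^{-n}$ with $\beta_{\max}=\max_i\beta_i<1$. Hence $F\in\omega(C^x)$ by Definition \ref{de:3.1}.

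\emph{Inclusion $\subseteq$.} Given $e^{t_k}C^x\to F$, choose $n_k$ with $\lambda_{n_k}(\mathop x\limits_-)\le e^{t_k}<\lambda_{n_k+1}(\mathop x\limits_-)$. The ratios $e^{t_k}/\lambda_{n_k}(\mathop x\limits_-)$ then lie in $[1,\beta_{\min}^{-1}]$, so along a subsequence they converge to some $\alpha$ in that interval.

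The main obstacle, small as it is, is transferring $\rho$-convergence along the subsequence from $e^{t_k}C^x$ to $\alpha\lambda_{n_k}(\mathop x\limits_-)C^x$. Write $a_k=e^{t_k}/\lambda_{n_k}(\mathop x\limits_-)$, so that $a_k\to\alpha$ and $\alpha\lambda_{n_k}(\mathop x\limits_-)C^x=(\alpha/a_k)\,e^{t_k}C^x$. I would verify both conditions of Lemma \ref{le:3.4}, using that multiplication by $\alpha/a_k\to1$ moves any convergent sequence of points by a vanishing amount.

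\emph{Condition (i).} If $z_k\in e^{t_k}C^x$ and $z_k\to z$, then $(\alpha/a_k)z_k\to z$.

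\emph{Condition (ii).} If $(\alpha/a_{k_j})z_{k_j}$ converges, then $z_{k_j}$ converges to the same limit, since $a_{k_j}/\alpha\to1$. That limit lies in $F$ because $e^{t_{k_j}}C^x\to F$.

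Together these give $\alpha\lambda_{n_k}(\mathop x\limits_-)C^x\to F$ in $\rho$, which completes the identity.
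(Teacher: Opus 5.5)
Your proposal is correct and follows the paper's route: the paper proves this corollary simply by specializing Proposition \ref{pr:3.3} to $G=C$ and $y=x=\pi(\mathop x\limits_-)$, exactly as you do. Your recap of the mechanism also matches the paper's proof of that proposition: you bracket $e^{t_k}$ between $\lambda_{n_k}(\mathop x\limits_-)$ and $\lambda_{n_k+1}(\mathop x\limits_-)$, extract $\alpha\in[1,\beta_{\min}^{-1}]$, and transfer $\rho$-convergence via Lemma \ref{le:3.4}, a step you spell out where the paper only asserts it.
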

\begin{prof}
One can check easily that it can be proved by Proposition \ref{pr:3.3}.
  \qed
  \end{prof}
\subsection{Limit models of self-similar sets in ${{\mathbb{R}}^d}$}

\begin{lemma}\label{le:3.61}
Let $C\subset{{\mathbb{R}}^d}$ be a self-similar set generated by an IFS $\{ {\varphi _i}= {\beta_i}g_i (x) + {c_i}\} _{i = 0}^{m-1}$, where $0 < {\beta_i}  < 1$, $g_i\in O(d)$ and ${c_i}\in {{\mathbb{R}^d}}$ for $i = 0,1, \cdots ,m - 1$. Suppose $\{ {\varphi _i}\} _{i = 0}^{m-1}$ satisfies the strong separation condition. Let $\mathop \Pi \limits_{i = 1}^d [{a_i},{b_i}] = [{a_1},{b_1}] \times  \cdots  \times [{a_d},{b_d}]$, $\mathop \Pi \limits_{i = 1}^d ({a_i},{b_i})= ({a_1},{b_1}) \times  \cdots  \times ({a_d},{b_d})$.
For any $\mathop x\limits_ -   = {x_0}{x_1}...$ $\in {\Sigma^ + }$, any $F =(\rho ) \mathop {\lim }\limits_{k \to \infty } \alpha {{\lambda _{{n_k}}}(\mathop x\limits_ -)}{C^x} \in \omega({C^x})$ with $\pi (\mathop x\limits_ -) = x$, and any $ {a_i},{b_i} \in {\mathbb{R}}$, ${a_i} < {b_i}$, $i = 1,2, \ldots ,d$, \\
(1). There is a closed set ${F^ * }$ and a subsequence $\{ {n_{k_l}}\} $ of $\{ {n_k}\} $
  such that
  $$\rho( (\alpha {\lambda _{{n_{{k_l}}}}(\mathop x\limits_ -)}{C^x}) \cap \mathop \Pi \limits_{i = 1}^d [{a_i},{b_i}],{F^ * })\to 0$$
and
$$F \cap \mathop \Pi \limits_{i = 1}^d ({a_i},{b_i})= {F^*} \cap \mathop \Pi \limits_{i = 1}^d ({a_i},{b_i}).$$
(2). For any positive integer $N$, there exists a integer $r(N)$ such that for each positive integer $w \ge -r(N)$,
$$({\lambda _w}(\mathop x\limits_ -){C^x}) \cap \mathop \Pi \limits_{i = 1}^d [ - N,N] = (\lambda _w^{w + r(N)}(\mathop x\limits_ -){h^{w +r(N)}}(\mathop x\limits_ -)({C^{x_{w +r(N)}^*}} ))\cap \mathop \Pi \limits_{i = 1}^d [ - N,N] ,$$
where $x_{w +r(N)}^* = {S^{w +r(N)}}(x)$.\\
(3). If $ - \alpha  N <  {a_i}< {b_i} <  \alpha N, i = 1,2, \ldots ,d $ for a positive integer $ N $, then for the $r(N)$ in Lemma \ref{le:3.61} (2), 
there exists a sub-sequence  $\{n_{k_l}\} $ of $\{n_k\} $, $h(N) \in O(d)$, ${z_N} \in C$ and $\beta (N) \in [0,\mathop \beta \limits^ -  ]$ with $\mathop \beta \limits^ -  = max\{ {({\beta _{\max }})^{r(N)}},{({\beta _{\min }})^{r(N)}}\}$ such that
  $$(\alpha{{{\lambda _{n_{k_l}}}(\mathop x\limits_ -)}}{C^x}) \cap \mathop \Pi \limits_{i = 1}^d [{a_i},{b_i}] =(\alpha{\beta (N)}{h^{n_{k_l} +r(N)}}(\mathop x\limits_ -) ({{C^{x_{n_{k_l}+r(N)}^*}}}))\cap \mathop \Pi \limits_{i = 1}^d [{a_i},{b_i}] ,$$
  $$\tilde d({h^{n_{k_l} +r(N)}}(\mathop x\limits_ -),h(N)) \to 0,$$
 $$x_{n_{k_l}+r(N)}^* \to {z_N}$$ 
  and
$$ F \cap \mathop \Pi \limits_{i = 1}^d 
({a_i},{b_i})= (\alpha {\beta (N)}h(N)({C^{{z_N}}})) \cap \mathop \Pi \limits_{i = 1}^d 
({a_i},{b_i}).$$
\end{lemma}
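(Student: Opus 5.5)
Part (1) is a direct application of Lemma \ref{Le:3.5}. Take $U=\mathop \Pi \limits_{i = 1}^d ({a_i},{b_i})$, so that $\overline U=\mathop \Pi \limits_{i = 1}^d [{a_i},{b_i}]$, and take $F_k=\alpha\lambda_{n_k}(\mathop x\limits_-)C^x$. Lemma \ref{Le:3.5} then supplies the closed set $F^*$, the subsequence, and the equality $F\cap U=F^*\cap U$.

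For (2), the idea is that the SSC gives a uniform gap. Set $\delta=\min_{i\ne j}\operatorname{dist}(\varphi_i(C),\varphi_j(C))>0$. By self-similarity, for each $p$ we have $C\setminus C_{x_0\ldots x_{p-1}}\subset C\setminus C_{x_0\ldots x_{q}}$ for the appropriate $q<p$. Using this at the first level where the codings differ, every point of $C$ outside the cylinder $C_{x_0\ldots x_{p-1}}$ is at distance at least $\lambda^{p-1}(\mathop x\limits_-)\,\delta$ from $x$. More precisely, take $y\in C_{x_0\ldots x_{q-1}}\setminus C_{x_0\ldots x_q}$ with $q\le p-1$. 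Then $y$ and $x$ lie in different first-level pieces of $\varphi_{x_0\ldots x_{q-1}}(C)$, so $|y-x|\ge\lambda^q\delta\ge\lambda^{p-1}\delta$.

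After scaling by $\lambda_w$, points of $\lambda_w C^x$ that come from outside $C_{x_0\ldots x_{w+r-1}}$ are at distance at least $\lambda^{w+r-1}\lambda_w\delta\ge\beta_{\min}^{\,r-1}\delta$ from $0$. We want this to be at least $\sqrt d N$, which would force us to take $r$ negative. That is presumably why the statement allows $r(N)$ to be a (negative) integer and requires $w\ge -r(N)$. So choose $r(N)\le 0$ with $\beta_{\max}^{\,r(N)-1}\delta>\sqrt d\,N$, noting that $\lambda^{w+r-1}/\lambda^w\ge\beta_{\max}^{\,r-1}$ for $r\le 0$. With this choice, the intersection with the cube only sees the cylinder $C_{x_0\ldots x_{w+r-1}}$.

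Next, rewrite that cylinder. Since $C_{x_0\ldots x_{p-1}}=\varphi_{x_0\ldots x_{p-1}}(C)$ and $x=\varphi_{x_0\ldots x_{p-1}}(x^*_p)$, we get
$$C_{x_0\ldots x_{p-1}}-x=\lambda^p h^p\bigl(C^{x_p^*}\bigr).$$
Multiplying by $\lambda_w$ gives $\lambda_w^{\,w+r}h^{w+r}(C^{x^*_{w+r}})$, which is the identity claimed in (2).

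For (3), apply (2) with $w=n_k$, after multiplying by $\alpha$. The cube $\Pi[-\alpha N,\alpha N]$ contains $\Pi[a_i,b_i]$, so we obtain the displayed equality with $\beta(N)$ replaced by $\beta_k=\lambda_{n_k}^{n_k+r(N)}$. This quantity is a product of $|r(N)|$ ratios, or their inverses, so it lies in a compact set, roughly between $\beta_{\min}^{\,r}$ and $\beta_{\max}^{\,r}$. We then extract a subsequence along which three things converge: $\beta_k\to\beta(N)$ (by compactness of the interval), $h^{n_k+r(N)}\to h(N)$ (by compactness of $O(d)$), and $x^*_{n_k+r(N)}\to z_N$ (by compactness of $C$).

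Finally, use Proposition \ref{pr:5}(3), adapted to include the converging scalar factor, to get
$$\alpha\beta_k h^{n_k+r}(C^{x^*})\to\alpha\beta(N)h(N)(C^{z_N})$$
in $\rho$. To pass to the intersection with the box, combine this with (1). Both $F$ and the new limit $\alpha\beta(N)h(N)(C^{z_N})$ agree with $F^*$ on the open box $\Pi(a_i,b_i)$. This comparison uses Lemma \ref{Le:3.5} applied to both sequences, which agree after intersecting with $\Pi[a_i,b_i]$ along a common further subsequence. That yields the final equality.

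The main obstacle is (2). The difficulty is the careful gap estimate that uses the SSC uniformly across all scales, together with getting the sign and range of $r(N)$ right so that $\beta(N)$ stays bounded. The remaining steps are compactness arguments.
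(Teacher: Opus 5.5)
Your proposal is correct and follows essentially the same route as the paper: (1) is Lemma~\ref{Le:3.5} with $U=\Pi_{i=1}^d(a_i,b_i)$; (2) is the SSC gap estimate at the first index where the codings of $y$ and $x$ differ, followed by the cylinder identity $C_{x_0\ldots x_{p-1}}-x=\lambda^p h^p(C^{x_p^*})$, where always taking $r(N)\le 0$ just merges the paper's two cases; and (3) is (2) with $w=n_k$, compactness of $O(d)$ and $C$, Proposition~\ref{pr:5}(3), and a comparison of the two limits through Lemma~\ref{Le:3.5}. One point to tighten: the first identity in (3) requires a \emph{fixed} $\beta(N)$, so rather than relying on compactness and convergence you should use, as the paper does, that $\lambda_{n_k}^{n_k+r(N)}(\underline x)$ takes only finitely many values (products of $|r(N)|$ elements of $\{\beta_i^{\pm 1}\}$) and pass to a subsequence on which it is constant, which also makes your ``adapted'' version of Proposition~\ref{pr:5}(3) unnecessary.
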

 \begin{prof}
  (1). For any $\mathop x\limits_ -   = {x_0}{x_1}...$ $\in {\Sigma^ + }$, any $F =(\rho ) \mathop {\lim }\limits_{k \to \infty } \alpha {{\lambda _{{n_k}}}(\mathop x\limits_ -)}{C^x} \in \omega({C^x})$ with $\pi (\mathop x\limits_ -) = x$, and any $ {a_i},{b_i} \in {\mathbb{R}}$, ${a_i} < {b_i}$, $i = 1,2, \ldots ,d$, by Lemma \ref{Le:3.5}, there is a  closed set ${F^ * }$ and a subsequence $\{ {n_{k_l}}\} $ of $\{ {n_k}\} $
  such that
  $$\rho( (\alpha {\lambda _{{n_{{k_l}}}}(\mathop x\limits_ -)}{C^x}) \cap \mathop \Pi \limits_{i = 1}^d [{a_i},{b_i}],{F^ * })\to 0$$
and
$$F \cap \mathop \Pi \limits_{i = 1}^d ({a_i},{b_i})= {F^*} \cap \mathop \Pi \limits_{i = 1}^d ({a_i},{b_i}).$$
  (2). For any $y \in C$, $y \ne x$,
let  $\mathop {{\rm{ }}y}\limits_ -   = {y_0}{y_1}...$ be the coding of  $y$.
  For any positive integer $N$, if ${\lambda_{{w}} }(\mathop x\limits_ -) (y - x)  \in B(0,N\sqrt d)$, first notice that there exists an integer $r(N)$ such that for each positive integer $w \ge -r(N)$,
\begin{align}\label{E：3.2}
{y_r} = {x_r}\; for\; any\; r< w+r(N).
\end{align}
 Hence by \eqref{E：2.2}, \eqref{E：2.4} and  \eqref{E：3.2}, we have
 \begin{equation*}
 \begin{split}
     &({\lambda _w}(\mathop x\limits_ -){C^x}) \cap \mathop \Pi \limits_{i = 1}^d [ - N,N] \\=&({\lambda _w}(\mathop x\limits_ -){C^x})\cap B(0,N\sqrt d)\cap \mathop \Pi \limits_{i = 1}^d [ - N,N]\\
     =&({\lambda _w}(\mathop x\limits_ -)\{ {y-x}|{y_r} = {x_r},r< w+r(N),y \in C\})\cap \mathop \Pi \limits_{i = 1}^d [ - N,N]\\
     =&(\lambda _w^{w +r(N)}(\mathop x\limits_ -){h^{w +r(N)}}(\mathop x\limits_ -)(\{{\lambda _{w +r(N)}}(\mathop x\limits_ -){h _{w +r(N)}}(\mathop x\limits_ -) (y-x)|{y_r} = {x_r},r< w+r(N),y \in C\}))\\
     &\cap \mathop \Pi \limits_{i = 1}^d [ - N,N]\\
      =&(\lambda _w^{w +r(N)}(\mathop x\limits_ -){h^{w +r(N)}}(\mathop x\limits_ -)(\{{y_{w +r(N)}^*}-{x_{w +r(N)}^*}|{y_{w +r(N)}^*} \in C\}))\cap \mathop \Pi \limits_{i = 1}^d [ - N,N]\\
     =&(\lambda _w^{w +r(N)}(\mathop x\limits_ -){h^{w +r(N)}}(\mathop x\limits_ -)({C^{x_{w +r(N)}^*}})) \cap \mathop \Pi \limits_{i = 1}^d [ - N,N],
 \end{split}
 \end{equation*}
where $x_{w +r(N)}^* = {S^{w +r(N)}}(x)$.\\
Below we provide the proof of \eqref{E：3.2}.\\
 Let
${r_1} = \min \{ i \in {{\mathbb{N}}}\left| {{y_i} \ne {x_i}} \right.\} ,$
we have
${\lambda ^{{r_1}}}(\mathop x\limits_ -) = {\lambda ^{{r_1}}}(\mathop y\limits_ -)$
and
${h ^{{r_1}}}(\mathop x\limits_ -) = {h ^{{r_1}}}(\mathop y\limits_ -)$.\\
 Since $\{ {\varphi _i}\} _{i = 0}^{m-1}$ satisfies the strong separation condition, we have
  \begin{center}
  $d({\varphi _i}(C),{\varphi _j}(C)) > 0$ for any $i \ne j \in\{ 0,1, \cdots ,m - 1\},$
  \end{center}
  where $d(A,B) = \inf \{ d(x,y):x \in A,y \in B\} $ for any non-empty set $A$, $B$ on ${{\mathbb{R}^d}}$.\\
  Let
$$t = \min \{ d({\varphi _i}(C),{\varphi _j}(C)) \left| \right.i \ne j \in\{ 0,1, \cdots ,m - 1\}\}> 0 .$$
Then we have
\[
\begin{aligned}
\left| {x - y} \right| &\ge d({\varphi _{{x_0} \ldots {x_{r_1}}}}(C),{\varphi _{{y_0}\ldots {y_{r_1}}}}(C))\\
& = d({\lambda ^{{r_1}}}(\mathop x\limits_ -){h ^{{r_1}}}(\mathop x\limits_ -)({\varphi _{{x_{{r_1}}}}}(C)),{\lambda ^{{r_1}}}(\mathop x\limits_ -){h ^{{r_1}}}(\mathop x\limits_ -)({\varphi _{{y_{{r_1}}}}}(C))) \\
& = {\lambda ^{{r_1}}}(\mathop x\limits_ -)d({h ^{{r_1}}}(\mathop x\limits_ -)({\varphi _{{x_{{r_1}}}}}(C)),{h ^{{r_1}}}(\mathop x\limits_ -)({\varphi _{{y_{{r_1}}}}}(C))) \\
& = {\lambda ^{{r_1}}}(\mathop x\limits_ -)d({\varphi _{{x_{{r_1}}}}}(C),{\varphi _{{y_{{r_1}}}}}(C)) \\
& \ge {\lambda ^{{r_1}}}(\mathop x\limits_ -)t.
\end{aligned}
\]
Let
${\beta _{\max }} = \mathop {\max }\limits_{0 \le i \le m - 1} ({\beta _i})$
and
${\beta _{\min }} = \mathop {\min }\limits_{0 \le i \le m - 1} ({\beta _i}).$\\
For any $w \in {{\mathbb{Z}}}$, we have
\begin{equation}	
 		{\lambda_{{w}} }(\mathop x\limits_ -)\left| {x - y} \right| \ge t\lambda _w^{{r_1}}(\mathop x\limits_ -)\ge
 		\begin{cases}
 			t{({\beta _{\min }})^{{r_1} - w}},w<{r_1} \\
 			t{({\beta _{\max }})^{{r_1} - w}},w>{r_1}\\
            t,w={r_1} .
 		\end{cases}
 		\label{eq_h_s}
 	\end{equation}
Hence ${\lambda_{{w}} }(\mathop x\limits_ -)\left| {x - y} \right| >N\sqrt d$ if
\begin{equation}	
 		{r_1} <w+
 		\begin{cases}
 			{{\ln {{N\sqrt d } \over t}} \over {\ln {\beta _{\max }}}},t \le N\sqrt d \;when\; w>r_1 \\
 			{{\ln {{N\sqrt d } \over t}} \over {\ln {\beta _{\min }}}},t > N\sqrt d\; when\;w\le r_1 .
 		\end{cases}
 		\label{eq_h_s}
 	\end{equation}
Conversely, if ${\lambda_{{w}} }(\mathop x\limits_ -) (y - x)  \in B(0,N\sqrt d)$, we have
\begin{equation}	
 		{r_1} \ge w+
 		\begin{cases}
 			{{\ln {{N\sqrt d } \over t}} \over {\ln {\beta _{\max }}}},t \le N\sqrt d \;when\; w>r_1 \\
 			{{\ln {{N\sqrt d } \over t}} \over {\ln {\beta _{\min }}}},t > N\sqrt d\; when\;w\le r_1 .
 		\end{cases}
 		\label{eq_h_s}
 	\end{equation}
Let $y = [x],x \in {\mathbb{R}}$ be least integer function.
Let
\begin{equation}	
 		r(N) = 
 		\begin{cases}
 			[{{\ln {{N\sqrt d } \over t}} \over {\ln {\beta _{\max }} }}],t \le N\sqrt d \\
 			[{{\ln {{N\sqrt d } \over t}} \over {\ln {\beta _{\min }} }}],t > N\sqrt d .
 		\end{cases}
 		\label{eq_h_s}
 	\end{equation}
As a result,
if ${\lambda_{{w}} }(\mathop x\limits_ -) (y - x)  \in B(0,N\sqrt d)$, we have 
$r_1 \ge w+r(N)$. That is,
\begin{center}
  ${y_r} = {x_r}$ for any $r < w+r(N)$.
\end{center}
This completes the proof. \\
(3). 
Indeed, there exists a positive integer $ N $ such that $ - \alpha  N <  {a_i}< {b_i} <  \alpha N, i = 1,2, \ldots ,d $. For the $N$, by Lemma \ref{le:3.61} (2), there exists an integer $r(N)$ such that
\begin{equation}
({\lambda _{{n_{{k}}}}}(\mathop x\limits_ -){C^x}) \cap \mathop \Pi \limits_{i = 1}^d[ - N,N] = (\lambda _{{n_{{k}}}}^{{n_{{k}}} +r(N)}(\mathop x\limits_ -){h^{n_k +r(N)}}(\mathop x\limits_ -)({C^{x_{{n_{{k}}} +r(N)}^ * }})) \cap \mathop \Pi \limits_{i = 1}^d[ - N,N],
\end{equation}
 where $x_{{n_{{k}}} +r(N)}^* = {S^{{n_{{k}}} +r(N)}}(x)$.\\
Hence
\begin{equation}\label{E:3.06}
    (\alpha {\lambda _{{n_{{k}}}}}(\mathop x\limits_ -){C^x}) \cap \mathop \Pi \limits_{i = 1}^d[{a_i},{b_i}] = (\alpha \lambda _{{n_{{k}}}}^{{n_{{k}}} +r(N)}(\mathop x\limits_ -){h^{n_k +r(N)}}(\mathop x\limits_ -)({C^{x_{{n_{{k}}} + r(N)}^ * }}))  \cap \mathop \Pi \limits_{i = 1}^d[{a_i},{b_i}].
\end{equation}
Let $\mathop \beta \limits^ -  = max\{ {({\beta _{\max }})^{r(N)}},{({\beta _{\min }})^{r(N)}}\}.$\\
Since
$\{ x_{{n_{{k}}} +r(N)}^*\}  \subset C$,
  $\{ \lambda _{{n_{{k}}}}^{{n_{{k}}} + r(N)}(\mathop x\limits_ -)\left| {k \in  {\mathbb{N}_+}} \right.\}\subset [0,\mathop \beta \limits^ -  ] $ has only a finite number of possible values and $\{{h^{n_k +r(N)}}(\mathop x\limits_ -)\}\subset O(d)$,
 we infer from
 Lemma \ref{le:3.61} (1) that there is a closed set ${F^ * }$, a subsequence $\{ {n_{k_l}}\} $ of $\{ {n_k}\} $, a positive number $\beta (N) \in (0,\mathop \beta \limits^ -  ]$, $h(N) \in O(d)$ and ${z_N} \in C$
  such that
  $$\rho ( (\alpha {\lambda _{{n_{{k_l}}}}(\mathop x\limits_ -)}{C^x}) \cap \mathop \Pi \limits_{i = 1}^d [{a_i},{b_i}],{F^ * })\to 0,$$
  $$F \cap \mathop \Pi \limits_{i = 1}^d ({a_i},{b_i})={F^ * } \cap \mathop \Pi \limits_{i = 1}^d ({a_i},{b_i}),$$
  $$\lambda _{{n_{{{k_l}}}}}^{{n_{{{k_l}}}} +r(N)}(\mathop x\limits_ -)= \beta (N),$$
  $$\tilde d({h^{n_{k_l} +r(N)}}(\mathop x\limits_ -),h(N)) \to 0$$
  and
  $$x_{{n_{{{k_l}}}} + r(N)}^* \to {z_N}.$$
 By \eqref{E:3.06} and Proposition \ref{pr:5} (3), then we have
  $$(\alpha {\lambda _{{n_{{k_l}}}}(\mathop x\limits_ -)}{C^x}) \cap \mathop \Pi \limits_{i = 1}^d [{a_i},{b_i}]=(\alpha\beta (N){h^{n_{k_l} +r(N)}}(\mathop x\limits_ -)({C^{x_{{n_{{k_l}}} +r(N)}^ * }}))\cap \mathop \Pi \limits_{i = 1}^d [{a_i},{b_i}],$$
  $$\rho((\alpha\beta (N){h^{n_{k_l} +r(N)}}(\mathop x\limits_ -)({C^{x_{{n_{{k_l}}} +r(N)}^ * }}))\cap \mathop \Pi \limits_{i = 1}^d [{a_i},{b_i}],{F^ * })\to 0$$
  and
  $$\alpha\beta (N){h^{n_{k_l} +r(N)}}(\mathop x\limits_ -)({C^{x_{{n_{{k_l}}} + r(N)}^ * }})\mathop  \to \limits^{\rho}\alpha\beta (N)h(N)({C^{z_N}}).$$
  By Lemma \ref{Le:3.5}, we have
$$(\alpha\beta (N)h(N)({C^{z_N}})) \cap \mathop \Pi \limits_{i = 1}^d ({a_i},{b_i})= {F^*} \cap \mathop \Pi \limits_{i = 1}^d ({a_i},{b_i}).$$
Hence 
$$F \cap \mathop \Pi \limits_{i = 1}^d ({a_i},{b_i})=(\alpha\beta (N)h(N)({C^{z_N}})) \cap \mathop \Pi \limits_{i = 1}^d ({a_i},{b_i}).$$
Hence we finish the proof of Lemma \ref{le:3.61} (3).
\qed
\end{prof}

\begin{lemma}\label{le:3.7}
Let $C\subset{{\mathbb{R}^d}}$ be a self-similar set satisfying the open set condition with $s = {\dim _H}(C)$. Suppose $C$ is not included in any hyperplane in ${{\mathbb{R}}^d}$. Let $M(d, d-1) $ be a set of all hyperplanes on ${\mathbb {R}^ d}$ and ${V_\varepsilon }(P) = \{ x \in {\mathbb{R}^d}\left| {d(x,P)} \right. \leqslant \varepsilon \} $ for a hyperplane $P\in M(d, d-1)$. Then\\
(1). ${{\cal H}^s}(C \cap P) = 0$ for any $P \in M(d,d-1)$.\\
(2). $\mathop {\lim }\limits_{\varepsilon  \to {\text{0}}} {{\cal H}^s}(C \cap {V_\varepsilon }(P))=0$ for any $P \in M(d,d-1)$.
\end{lemma}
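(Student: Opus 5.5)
Part (2) follows from part (1) by continuity of measure from above. The sets $C\cap V_\varepsilon(P)$ decrease as $\varepsilon\downarrow 0$, and their intersection is $C\cap P$, since $P$ is closed. Because ${\cal H}^s(C)<\infty$ by Theorem \ref{th:2.5}, continuity from above gives ${\cal H}^s(C\cap V_\varepsilon(P))\to {\cal H}^s(C\cap P)=0$. The real work is therefore part (1).

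For (1) I will argue by contradiction using self-similarity and density. Let $\mu={\cal H}^s\lfloor_C$ and suppose ${\cal H}^s(C\cap P)>0$ for some hyperplane $P$.

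\textbf{Density step.} By the density theorem for ${\cal H}^s$ restricted to a set of finite measure, $\mu$-almost every point $y\in C\cap P$ satisfies
$$\lim_{r\to0}\frac{{\cal H}^s((C\setminus P)\cap B(y,r))}{{\cal H}^s(C\cap B(y,r))}=0.$$
This is the standard Lebesgue–Besicovitch differentiation for the Radon measure $\mu$ with $A=C\cap P$; it needs no doubling hypothesis, although Theorem \ref{th:2.5} would give doubling anyway.

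\textbf{Self-similar lift.} Take such a $y$ with coding $y_0y_1\ldots$. The cylinders $C_{y_0\ldots y_{p-1}}$ contain $y$, have diameter comparable to $\lambda^p(\underline y)$, and are contained in $B(y,\operatorname{diam} C_{y_0\ldots y_{p-1}})$. By Theorem \ref{th:2.5}, together with the OSC fact that ${\cal H}^s(C_{y_0\ldots y_{p-1}})=(\lambda^p)^s{\cal H}^s(C)$, each cylinder carries a fixed positive proportion of the ball's measure. Hence
$$\frac{{\cal H}^s(C_{y_0\ldots y_{p-1}}\setminus P)}{{\cal H}^s(C_{y_0\ldots y_{p-1}})}\to0 .$$
Pull back by $\varphi_{y_0\ldots y_{p-1}}^{-1}$, which scales ${\cal H}^s$ by the constant factor $(\lambda^p)^{-s}$. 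This gives
$$\frac{{\cal H}^s(C\setminus P_p)}{{\cal H}^s(C)}\to0,\qquad P_p=\varphi_{y_0\ldots y_{p-1}}^{-1}(P),$$
where each $P_p$ is again a hyperplane.

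\textbf{Compactness step (main obstacle).} The hyperplanes $P_p$ all meet $C$ (each contains $S^p(y)\in C$), so they lie in a compact family: a unit normal in $S^{d-1}$ and an offset bounded by $\sup_{z\in C}|z|$. Pass to a subsequence $P_p\to P_\infty$. I then need ${\cal H}^s(C\setminus P_\infty)=0$, which requires passing the measure statement to the limit.

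To do this, fix $\delta>0$. For large $p$, the set $C\setminus V_\delta(P_\infty)$ is contained in $C\setminus P_p$, since $P_p\cap B(0,R)\subset V_\delta(P_\infty)$ eventually. So ${\cal H}^s(C\setminus V_\delta(P_\infty))=0$ for every $\delta$. Letting $\delta\to0$ gives ${\cal H}^s(C\setminus P_\infty)=0$.

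\textbf{Contradiction.} $C\setminus P_\infty$ is relatively open in $C$, and by Theorem \ref{th:2.5} every nonempty relatively open subset of $C$ has positive ${\cal H}^s$ measure. Hence $C\setminus P_\infty=\emptyset$, so $C\subset P_\infty$, contradicting the hypothesis that $C$ lies in no hyperplane. This proves (1), and (2) follows as described at the start.
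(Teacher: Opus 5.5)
Your proof is correct. Part (2) is exactly the paper's argument: continuity from above, using ${\cal H}^s(C)<\infty$. For part (1) the paper gives no argument of its own and simply cites Mattila's Corollary 4.3. You instead give a self-contained blow-up proof with these steps:
\begin{itemize}
\item take a Lebesgue--Besicovitch density point $y$ of $C\cap P$;
\item transfer the vanishing relative density of $C\setminus P$ from the balls $B(y,r)$ to the cylinders $C_{y_0\ldots y_{p-1}}$, using the upper ball estimate of Theorem~\ref{th:2.5};
\item rescale by $\varphi_{y_0\ldots y_{p-1}}^{-1}$, which preserves measure ratios and sends $P$ to a hyperplane through $\pi(\sigma^p\underline y)\in C$;
\item use compactness of the family of hyperplanes meeting $C$;
\item conclude from the positivity of ${\cal H}^s$ on nonempty relatively open subsets of $C$.
\end{itemize}
Each step checks out; this is the standard blow-up argument for statements of this kind.

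Two cosmetic corrections:
\begin{itemize}
\item The identity ${\cal H}^s(\varphi(C))=r^s{\cal H}^s(C)$ holds for every similitude and is not an OSC fact. The OSC enters only through Theorem~\ref{th:2.5}.
\item The map $S^p$ is defined in the paper only under the SSC. Under the OSC you should write $\varphi_{y_0\ldots y_{p-1}}^{-1}(y)=\pi(\sigma^p\underline y)$ instead.
\end{itemize}

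As for what each route buys: the citation keeps the paper short and rests on a published result. Your argument uses only tools already present in the paper (Theorem~\ref{th:2.5}, scaling of ${\cal H}^s$, Lebesgue--Besicovitch differentiation). It also shows exactly where the hypothesis that $C$ lies in no hyperplane is used. Finally, it carries over verbatim to affine $k$-planes, since the affine $k$-planes meeting a fixed compact set also form a compact family.
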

\begin{prof}
(1).Mattila (\cite{ref15}; 4.3. Corollary) gave a proof of Lemma \ref{le:3.7} (1).\\
 (2). By (\cite{ref18}; 1.8 Theorem), we know that if $(X, \mathcal{M}, \mu)$ is a measure space,  $\left\{E_j\right\}_{j=1}^{\infty} \subset \mathcal{M}$, $ E_1 \supset E_2 \supset \cdots$ and $\mu\left(E_1\right)<\infty$, then $\mu \left( {\mathop  \cap \limits_{j = 1}^\infty  {E_j}} \right) = \mathop {\lim }\limits_{j \to \infty } \mu \left( {{E_j}} \right)$.
     Hence by Lemma \ref{le:3.7} (1) and 
     ${{\cal H}^s}(C)<\infty$, we have
     $\mathop {\lim }\limits_{\varepsilon  \to {\text{0}}} {{\cal H}^s}(C \cap {V_\varepsilon }(P))=0$ for any $P \in M(d,d-1)$.
     \qed
\end{prof}

\begin{definition}\label{de:2.}
For any $d$-dimensional cube $D\subset{{\mathbb{R}}^d}$, $D$ has exactly $2^d$ vertices and the boundary of $D$ donoted as $\partial D$ is composed of $2d$ $(d-1)$-dimensional cubes. Thus there exist $2d$ distinct hyperplanes denoted as $\{ {P_i} \in M(d,d - 1)|i = 1,2, \cdots ,2d\} $ satisfying $\partial D = (\mathop  \cup \limits_{i = 1}^{2d} {P_i}) \cap D$. Here ${P_i} \cap D$ denotes a $(d-1)$-dimensional cube for any $i = 1,2, \cdots ,2d$. Let $\{ {x_i} \in {{\mathbb{R}}^d}|i = 1,2, \cdots ,{2^d}\} $ be the vertex set of cube $D$, we have $D=conv(\{ {x_i} \in {{\mathbb{R}}^d}|i = 1,2, \cdots ,{2^d}\})$ and $\{ {x_i} \in {{\mathbb{R}}^d}|i = 1,2, \cdots ,{2^d}\}\subset \partial D $. For any $i = 1,2, \cdots ,2d$,
there exists $T_i \subset \{ {x_i} \in {{\mathbb{R}}^d}|i = 1,2, \cdots ,{2^d}\}$ with $card(T_i)=2^{d-1}$ such that $conv(T_i)={P_i} \cap D$, where $card(T_i)$ denotes the number of elements in $T_i$.
\end{definition}

\begin{lemma}\label{le:3.}
Let $C\subset{{\mathbb{R}^d}}$ be a self-similar set satisfying the open set condition with $s = {\dim _H}(C)$
and $D,{D_1},{D _2}, \cdots $ be $d$-dimensional cubes on ${{\mathbb{R}^d}}$. Suppose $C$ is not included in any hyperplane in ${{\mathbb{R}}^d}$. For any $k\in {{\mathbb{N}}_+}$, consider the vertex sets of $D$ and ${D _k}$ as $\{ {x_i} \in {{\mathbb{R}}^d}|i = 1,2, \cdots ,{2^d}\} $ and $\{ {x_{k,i}} \in {{\mathbb{R}}^d}|i = 1,2, \cdots ,{2^d}\} $, respectively. If $\left| {{x_i} - {x_{k,i}}} \right| \to 0$ as $k \to \infty $ for $i = 1,2, \cdots ,{2^d}$, then for any $y\in C$, any $g\in O(d)$, we have
$${{\cal H}^s}(g({C^y}) \cap D) = \mathop {\lim }\limits_{k \to \infty } {{\cal H}^s}(g({C^y}) \cap {D_k}).$$
\end{lemma}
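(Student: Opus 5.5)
Set $E=g(C^y)$ and $\nu={\cal H}^s\lfloor_E$. Since $g$ is an isometry and translation preserves ${\cal H}^s$, $E$ is an isometric copy of $C$, and $\nu$ is a finite Radon measure. Moreover, $E$ meets every hyperplane $P$ in an ${\cal H}^s$-null set, because $g^{-1}(P)+y$ is again a hyperplane and Lemma \ref{le:3.7} (1) applies to it. By Lemma \ref{le:3.7} (2), again transported through the isometry, we also get ${\cal H}^s(E\cap V_\varepsilon(P))\to0$ as $\varepsilon\to0$ for every fixed hyperplane $P$. The plan is to trap $D_k$ between two sets whose measures both tend to ${\cal H}^s(E\cap D)$, namely $D$ enlarged by a thin shell and $D$ shrunk by a thin shell. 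The shells lie in neighbourhoods of the $2d$ boundary hyperplanes $P_1,\dots,P_{2d}$ of $D$ from Definition \ref{de:2.}.

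The first step is a geometric estimate. Since the vertices converge, $\delta_k:=\max_i|x_i-x_{k,i}|\to0$. Each $D_k=\mathrm{conv}\{x_{k,i}\}$ and $D=\mathrm{conv}\{x_i\}$, so every point $\sum t_i x_{k,i}$ of $D_k$ lies within distance $\delta_k$ of the point $\sum t_ix_i$ of $D$, and conversely. Hence $D_k\subset D_{\delta_k}:=\{z:d(z,D)\le\delta_k\}$ and $D\subset (D_k)_{\delta_k}$. For the reverse inclusion I would like $D^{(-\eta)}:=\{z\in D: d(z,\partial D)\ge\eta\}\subset D_k$ whenever $\delta_k<\eta$. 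To justify this, I would use the facts that $D_k$ is convex and that its faces $\mathrm{conv}(T_{i})$, indexed as for $D$, are within Hausdorff distance $\delta_k$ of the faces of $D$. If a point $z\in D^{(-\eta)}$ were outside $D_k$, then, since the centre of $D$ is in $D_k$ for small $\delta_k$, the segment from the centre to $z$ would cross $\partial D_k$ inside $D^{(-\eta)}$, i.e.\ at distance $>\delta_k$ from $\partial D$, which is impossible. Alternatively one can argue directly with the half-space description of cubes. This yields
\[
D^{(-\delta_k)}\subset D_k\subset D_{\delta_k},\qquad D_{\delta}\setminus D^{(-\delta)}\subset \bigcup_{i=1}^{2d}V_{\delta}(P_i)\cap D_{\delta}.
\]

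The second step is the measure comparison. From the sandwich above,
\[
|{\cal H}^s(E\cap D_k)-{\cal H}^s(E\cap D)|\le {\cal H}^s\Bigl(E\cap \bigcup_{i=1}^{2d}V_{\delta_k}(P_i)\Bigr)\le\sum_{i=1}^{2d}{\cal H}^s(E\cap V_{\delta_k}(P_i)),
\]
and each term tends to $0$ by Lemma \ref{le:3.7} (2) applied to $E$. This gives the claimed limit.

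The main obstacle is the clean justification of the inner inclusion $D^{(-\delta_k)}\subset D_k$ together with the covering of the shell by the slabs $V_\delta(P_i)$, since the $D_k$ may be rotated cubes. The outer inclusion is immediate from convex combinations. For the inner one I would first try a separating-hyperplane argument: if $z\notin D_k$, some face hyperplane of $D_k$ separates $z$ from $D_k$; the $2^{d-1}$ vertices of that face are $\delta_k$-close to those of a face of $D$, which forces $d(z,P_i)\le\delta_k$ for the corresponding $P_i$. A cruder bound with a constant multiple $c_d\delta_k$ would suffice equally well.
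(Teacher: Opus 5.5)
Your proof is correct and follows essentially the same route as the paper. Both arguments use that convex combinations of the vertices move by at most $\delta_k$, which gives $D_k\subset V_{\delta_k}(D)$, $D\subset V_{\delta_k}(D_k)$ and $\partial D_k\subset V_{\delta_k}(\partial D)$; both then trap $D\,\Delta\, D_k$ in the slabs $V_{c\delta_k}(P_i)$ around the $2d$ boundary hyperplanes of $D$, and conclude with Lemma \ref{le:3.7} (2) transported through the isometry $z\mapsto g(z-y)$. Your segment argument only gives the inner inclusion for $\eta>\delta_k$, not $\eta=\delta_k$, but this is harmless: use $2\delta_k$ instead, which is exactly the paper's $V_{2\varepsilon}$.
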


\begin{prof}
Since $\left|x_i-x_{k, i}\right| \rightarrow 0$ as $k \rightarrow \infty$ for $i=1,2, \ldots, 2^d$, for any $\varepsilon>0$, there exists a positive number $K$ such that $\left|x_i-x_{k, i}\right|<\varepsilon$ for any $k \ge K$ and all $i=1,2, \cdots, 2^d$.\\
For any $k \in \mathbb{N}_{+}$, let $$H=\{P_i \in M(d, d-1) \mid i=1,2, \cdots, 2 d, \partial D=(\mathop  \cup \limits_{i = 1}^{2d} {P_i}) \cap D\}$$
and
$${H_k}=\{P_{k,i} \in M(d, d-1) \mid i=1,2, \ldots, 2 d, \partial D_k=(\mathop \cup \limits_{i = 1}^{2d} {P_{k,i}}) \cap D_k\}.$$
For any $i = 1,2, \cdots ,2d$, ${P_i} \cap D$ denotes a $(d-1)$-dimensional cube, and
there exists $T_i \subset \{ {x_i} \in {{\mathbb{R}}^d}|i = 1,2, \cdots ,{2^d}\}$ with $card(T_i)=2^{d-1}$ such that $conv(T_i)={P_i} \cap D$. Without losing generality, for any $i = 1,2, \cdots ,2d$, there exists $T_{k,i} \subset \{ {x_{k,i}} \in {{\mathbb{R}}^d}|i = 1,2, \cdots ,{2^d}\}$ with $card(T_{k,i})=2^{d-1}$ such that $conv(T_{k,i})={P_{k,i}} \cap D_k$ and $$d_H(T_i,T_{k,i})<\varepsilon$$
for any $k \ge K$.\\
Hence 
${T_{k,i}} \subset {V_\varepsilon }(conv({T_i})) = {V_\varepsilon }({P_i} \cap D)$ for any $k \ge K$ and any $i = 1,2, \cdots ,2d$.\\
Then
${P_{k,i}} \cap D_k = conv({T_{k,i}}) \subset {V_\varepsilon }({P_i} \cap D)$ for any $k \ge K$ and any $i = 1,2, \cdots ,2d$.\\
So we have
$$\partial D_k =\mathop \cup \limits_{i = 1}^{2d} ({P_{k,i}} \cap D_k)\subset \mathop \cup \limits_{i = 1}^{2d}({V_\varepsilon }({P_i} \cap D))={V_\varepsilon }(\mathop \cup \limits_{i = 1}^{2d}({P_i}) \cap D)= V_{\varepsilon}(\partial D)$$
for any $k \ge K$.\\
Hence we have $D_k={conv}(\{x_{k,1}, \cdots, x_{k, 2^d}\})\subset V_{\varepsilon}(D)$. \\
Similarly, $D={conv}(\{x_1, \cdots, x_{2^d}\}) \subset V_{\varepsilon}\left(D_k\right)$.\\
Then $${D_k} \backslash D \subset V_{\varepsilon}(D) \backslash D \subset V_{\varepsilon}(\partial D) \subset V_{\varepsilon}(\mathop  \cup \limits_{i = 1}^{2d} {P_i}) =\mathop \cup \limits_{i = 1}^{2d} V_{\varepsilon}(p_i)$$
and
$$D\backslash D_k \subset V_{\varepsilon}(D_k) \backslash D_k \subset V_{\varepsilon}\left(\partial D_k\right) \subset V_{2 \varepsilon}(\partial D) \subset \mathop \cup \limits_{i = 1}^{2d} V_{2\varepsilon}(p_i).$$
Let $D \Delta D_k=({D_k} \backslash D) \cup(D\backslash D_k)$, hence 
$$D \Delta D_k\subset(\mathop \cup \limits_{i = 1}^{2d} V_{\varepsilon}(p_i)) \cup(\mathop \cup \limits_{i = 1}^{2d} V_{2\varepsilon}(p_i))=\mathop \cup \limits_{i = 1}^{2d} V_{2\varepsilon}(p_i).$$
By Lemma \ref{le:3.7} (2), we have
$$\mathop {\lim }\limits_{k \to \infty } {{\cal H}^s}(g({C^y}) \cap (D \Delta D_k)))=0.$$
Hence
$${{\cal H}^s}(g({C^y}) \cap D) = \mathop {\lim }\limits_{k \to \infty } {{\cal H}^s}(g({C^y}) \cap {D_k}).$$
\qed
\end{prof}

\begin{lemma}\label{le:4.}
Let $C\subset{{\mathbb{R}^d}}$ be a self-similar set satisfying the open set condition with $s = {\dim _H}(C)$. Suppose $C$ is not included in any hyperplane in ${{\mathbb{R}}^d}$. Let $g, g_k \in O(d)$ for any $k \in \mathbb{N}_{+}$ with $\tilde{d}\left(g_k, g\right) \rightarrow 0$ as $k \rightarrow \infty$ and $y, y_k \in C$ for any $k \in \mathbb{N}_{+}$ with $\mathop {\lim} \limits_{k \rightarrow \infty} y_k=y$. Then for any $a_i, b_i \in \mathbb{R}, a_i<b_i, ,i = 1,2, \ldots ,d$,
we have 
$${{\cal H}^s}(g({C^y})\cap \mathop \Pi \limits_{i = 1}^d [{a_i},{b_i}]) = \mathop {\lim }\limits_{k \to \infty } {{\cal H}^s}(g_k({C^{y_k}}) \cap \mathop \Pi \limits_{i = 1}^d [{a_i},{b_i}]).$$
\end{lemma}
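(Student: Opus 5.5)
The idea is to move the whole perturbation onto the box. Since $g_k$ is an isometry, we have ${\cal H}^s(g_k(C^{y_k})\cap D)={\cal H}^s(g(C^{y})\cap \Phi_k(D))$, where $D=\mathop \Pi \limits_{i = 1}^d [{a_i},{b_i}]$ and $\Phi_k$ is an isometry of ${\mathbb{R}}^d$ that converges to the identity. Then Lemma \ref{le:3.} finishes the argument.

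\textbf{Step 1: change of variables.} Write $g_k(C^{y_k})=g_k(C)-g_k(y_k)$ and $g(C^y)=g(C)-g(y)$. Define the map $\Phi_k(z)=g\,g_k^{-1}\bigl(z+g_k(y_k)\bigr)-g(y)$. It is an isometry of ${\mathbb{R}}^d$ with linear part $g g_k^{-1}\in O(d)$, and it carries $g_k(C^{y_k})$ onto $g(C^y)$. Since Hausdorff measure is invariant under isometries,
$${\cal H}^s\bigl(g_k(C^{y_k})\cap D\bigr)={\cal H}^s\bigl(g(C^y)\cap \Phi_k(D)\bigr).$$
The set $D_k:=\Phi_k(D)$ is again a $d$-dimensional cube, possibly rotated and non-axis-parallel. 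We label its vertices $x_{k,i}=\Phi_k(x_i)$, where $x_1,\dots,x_{2^d}$ are the vertices of $D$.

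\textbf{Step 2: vertex convergence.} We need $|\Phi_k(x_i)-x_i|\to0$. Expanding,
$$|\Phi_k(x_i)-x_i|\le |g g_k^{-1}(x_i)-x_i|+|g g_k^{-1}g_k(y_k)-g(y)|.$$
For the first term, $g g_k^{-1}\to I_d$ in $\tilde d$, because $\|g g_k^{-1}-I_d\|=\|g-g_k\|$ (multiply on the right by the isometry $g_k$). So the first term tends to $0$ by Proposition \ref{pr:5} (1). The second term equals $|g(y_k)-g(y)|=|y_k-y|\to0$.

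\textbf{Step 3: conclude.} Now Lemma \ref{le:3.} applies with this $y$ and $g$ and the cubes $D,D_k$. It gives ${\cal H}^s(g(C^y)\cap D_k)\to{\cal H}^s(g(C^y)\cap D)$, which combined with Step 1 is the claim.

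\textbf{Main obstacle.} Lemma \ref{le:3.} is stated for $d$-dimensional cubes, whereas $D$ is a general box. I expect the proof of that lemma to carry over verbatim to boxes (parallelotopes with $2^d$ vertices and $2d$ facets), since it uses only the vertex and facet structure together with Lemma \ref{le:3.7} (2). The facet-matching between $D$ and $D_k$ is automatic here because $D_k$ is the image of $D$ under $\Phi_k$.

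If one prefers to avoid this, an alternative is to argue directly from Lemma \ref{le:3.7}. For any $\varepsilon>0$, once $k$ is large every point of $D_k$ lies within $\varepsilon'$ of $D$ and vice versa, so $D\,\Delta\, D_k\subset V_{\varepsilon'}(\partial D)\subset\bigcup_{i=1}^{2d} V_{\varepsilon'}(P_i)$. Here $g^{-1}(P_i)+y$ are hyperplanes, so Lemma \ref{le:3.7} (2), applied to $C$ after undoing $g$ and the translation by $y$, makes ${\cal H}^s(g(C^y)\cap(D\Delta D_k))$ arbitrarily small.
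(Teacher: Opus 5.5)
Your proposal is correct and follows essentially the same route as the paper. Your map $\Phi_k(z)=g g_k^{-1}(z)+g(y_k-y)$ is exactly the isometry the paper uses to move the box, your vertex estimate $|x_{k,i}-x_i|\le |g g_k^{-1}(x_i)-x_i|+|y_k-y|\to 0$ is the same, and the conclusion is drawn from Lemma \ref{le:3.} in the same way. Your caveat is also fair: Lemma \ref{le:3.} is stated for cubes while $\prod_{i=1}^d[a_i,b_i]$ is a general box, and facet matching is automatic because $D_k=\Phi_k(D)$. The paper passes over both points silently, though its proof of that lemma works verbatim for boxes.
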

\begin{prof}
For any $k \in \mathbb{N}_{+}$, suppose the vertex sets of $\mathop \Pi \limits_{i = 1}^d [{a_i},{b_i}]$ and $g \circ g_k^{-1} (\mathop \Pi \limits_{i = 1}^d [{a_i},{b_i}])+g(y_k-y)$ are $\left\{x_1, x_2, \cdots, x_{2^d}\right\}$ and $\left\{x_{k, 1}, x_{k, 2}, \cdots, x_{k, 2^d}\right\}$ respectively, and $$x_{k, i}=g \circ g_k^{-1}\left(x_i\right)+g\left(y_k-y\right)$$ 
for $i=1,2, \ldots, 2^d$.\\
Since $\tilde{d}\left(g_k, g\right) \rightarrow 0$ as $k \rightarrow \infty$, 
$\tilde{d}\left(g \circ g_k^{-1}, I_d\right) \rightarrow 0$ as $k \rightarrow \infty$, where $I_d$ is an identity transformation.\\
For any $\varepsilon>0$, there exists a positive number $K$ such that $$\left|g \circ g_k^{-1}\left(x_i\right)-x_i\right|<\frac{\varepsilon}{2}$$
and
$$
\left|g\left(y_k-y\right)\right|=\left|y_k-y\right|<\frac{\varepsilon}{2}
$$
for $k \geqslant K$ and all $i=1,2, \cdots, 2^d$.
We have 
$$
\begin{aligned}
\left|x_{k, i}-x_i\right|&=\left|g \circ g_k^{-1}\left(x_i\right)+g\left(y_k-y\right)-x_i\right|\\& \leq\left|g\circ g_k^{-1}\left(x_i\right)-x_i\right|+\left|g\left(y_k-y\right)\right| \\
& <\frac{\varepsilon}{2}+\frac{\varepsilon}{2} \\
& =\varepsilon 
\end{aligned}
$$
for $k \geqslant K$ and all $i=1,2, \cdots, 2^d$.\\
Hence $\left|x_{k, i}-x_i\right| \rightarrow 0$ as $k \rightarrow \infty$ for $i=1,2, \cdots, 2^d$.\\
By Lemma \ref{le:3.}, we have
$${{\cal H}^s}(g({C^y}) \cap \mathop \Pi \limits_{i = 1}^d [{a_i},{b_i}]) = \mathop {\lim }\limits_{k \to \infty } {{\cal H}^s}(g({C^y}) \cap (g \circ g_k^{-1} (\mathop \Pi \limits_{i = 1}^d [{a_i},{b_i}])+g(y_k-y))).$$
That is,
$${{\cal H}^s}(g({C^y})\cap \mathop \Pi \limits_{i = 1}^d [{a_i},{b_i}]) = \mathop {\lim }\limits_{k \to \infty } {{\cal H}^s}(g_k({C^{y_k}}) \cap \mathop \Pi \limits_{i = 1}^d [{a_i},{b_i}]).$$
\qed
\end{prof}

\begin{lemma}\label{le:5.}
Let $C\subset{{\mathbb{R}^d}}$ be a self-similar set satisfying the open set condition with $s = {\dim _H}(C)$. Suppose $C$ is not included in any hyperplane in ${{\mathbb{R}}^d}$.  
Suppose $\left\{g_k\right\}\subset O(d)$ is a convergent sequence  by $\widetilde{d}$. Suppose $\left\{y_k\right\} \subset C$ and $\left\{u_k\right\} \subset \mathbb{R}$ converge with $\mathop {\lim }\limits_{k \rightarrow 0} u_k=1$. Then for any $a_i, b_i \in \mathbb{R}, a_i<b_i,i = 1,2, \ldots ,d$,
we have
$$\lim _{k \rightarrow \infty} {{\cal H}^s}(g_k (C^{y_k}) \cap \mathop \Pi \limits_{i = 1}^d [{a_i},{b_i}])=\lim _{k \rightarrow \infty} {{\cal H}^s}(g_k (C^{y_k}) \cap \mathop \Pi \limits_{i = 1}^d [{{a_i} \over {{u_k}}},{{b_i} \over {{u_k}}}]) .$$
\end{lemma}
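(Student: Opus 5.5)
Write $Q=\prod_{i=1}^d [a_i,b_i]$ and $Q_k=\prod_{i=1}^d [a_i/u_k,b_i/u_k]=u_k^{-1}Q$. The plan is to reduce Lemma \ref{le:5.} to the vertex-perturbation statement of Lemma \ref{le:3.}, with the extra feature that the rotation and base point now move with $k$.

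We may assume $u_k>0$, since $u_k\to 1$. Let $g=\lim g_k$ in $\tilde d$ and $y=\lim y_k$; the latter exists and lies in $C$ because $C$ is compact. Lemma \ref{le:4.} gives
$${{\cal H}^s}(g_k(C^{y_k})\cap Q)\to {{\cal H}^s}(g(C^{y})\cap Q).$$
So the left-hand limit exists, and it suffices to prove
$${{\cal H}^s}(g_k(C^{y_k})\cap Q_k)\to {{\cal H}^s}(g(C^{y})\cap Q).$$
We would estimate
$$\bigl|{{\cal H}^s}(g_k(C^{y_k})\cap Q_k)-{{\cal H}^s}(g(C^y)\cap Q)\bigr|\le {{\cal H}^s}\bigl(g_k(C^{y_k})\cap (Q_k\,\Delta\, Q)\bigr)+\bigl|{{\cal H}^s}(g_k(C^{y_k})\cap Q)-{{\cal H}^s}(g(C^y)\cap Q)\bigr|.$$
The second term tends to $0$ by Lemma \ref{le:4.}.

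For the first term, pull everything back by the isometry. Applying $g_k^{-1}$ and translating by $y_k$ preserves ${{\cal H}^s}$, so
$${{\cal H}^s}\bigl(g_k(C^{y_k})\cap(Q_k\Delta Q)\bigr)={{\cal H}^s}\bigl(C\cap(g_k^{-1}(Q_k\Delta Q)+y_k)\bigr).$$
The vertices of $Q_k$ are $x_i/u_k$, and these converge to the vertices $x_i$ of $Q$. As in the proof of Lemma \ref{le:3.}, for any $\varepsilon>0$ and all large $k$ we get $Q_k\Delta Q\subset \bigcup_{j=1}^{2d}V_{2\varepsilon}(P_j)$, where $P_j$ are the hyperplanes containing the faces of $Q$.

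Hence $g_k^{-1}(Q_k\Delta Q)+y_k\subset \bigcup_j V_{2\varepsilon}(g_k^{-1}P_j+y_k)$. Since $g_k^{-1}\to g^{-1}$ and $y_k\to y$, and everything stays inside a fixed bounded region $B(0,R)$, the slab $V_{2\varepsilon}(g_k^{-1}P_j+y_k)\cap B(0,R)$ is contained in $V_{3\varepsilon}(g^{-1}P_j+y)$ for $k$ large. Here $R$ is large enough that $Q$, the $Q_k$, and $C-y_k$ all lie inside. Therefore
$$\limsup_k {{\cal H}^s}\bigl(g_k(C^{y_k})\cap(Q_k\Delta Q)\bigr)\le \sum_{j=1}^{2d}{{\cal H}^s}\bigl(C\cap V_{3\varepsilon}(g^{-1}P_j+y)\bigr).$$
Letting $\varepsilon\to 0$, this tends to $0$ by Lemma \ref{le:3.7} (2), applied to the $2d$ fixed hyperplanes $g^{-1}P_j+y$. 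This is where the hypothesis that $C$ is not contained in a hyperplane enters.

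The main obstacle is the uniformity just described. The moving hyperplanes $g_k^{-1}P_j+y_k$ must be trapped inside fixed slabs around limit hyperplanes, so that Lemma \ref{le:3.7} (2), which concerns a single fixed hyperplane, can be applied. The key step is the inclusion of neighbourhoods on the bounded region, which uses $\tilde d(g_k,g)\to 0$ and is uniform over $|x|\le R$. Combining the two estimates shows that both limits exist and equal ${{\cal H}^s}(g(C^y)\cap Q)$, which proves Lemma \ref{le:5.}.
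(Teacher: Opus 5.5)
Your proof is correct, but it is organised differently from the paper's. The paper never isolates $Q_k\,\Delta\,Q$. It applies the isometry $\Phi_k(z)=g\circ g_k^{-1}(z)+g(y_k-y)$, which carries $g_k(C^{y_k})$ onto the fixed set $g(C^y)$ and carries $Q_k$ onto the cube $D_k=\Phi_k(Q_k)$. The vertices of $D_k$ are $g\circ g_k^{-1}(x_i/u_k)+g(y_k-y)\to x_i$, so Lemma \ref{le:3.} gives ${\cal H}^s(g_k(C^{y_k})\cap Q_k)={\cal H}^s(g(C^y)\cap D_k)\to{\cal H}^s(g(C^y)\cap Q)$. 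Lemma \ref{le:4.} then identifies this value with the left-hand limit.

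So the paper freezes the set and pushes all the $k$-dependence (rotation, translation and rescaling) into a moving cube, where vertex convergence automatically supplies the uniformity. You keep the boxes axis-parallel and let the set move, so you must supply that uniformity yourself. You do this correctly with the slab-trapping step $V_{2\varepsilon}(g_k^{-1}P_j+y_k)\cap B(0,R)\subset V_{3\varepsilon}(g^{-1}P_j+y)$, using $\|g_k^{-1}-g^{-1}\|=\|g_k-g\|\to0$ uniformly on a bounded set.

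What each route buys:
\begin{itemize}
\item The paper's argument is shorter because it recycles Lemma \ref{le:3.}.
\item Yours makes the rescaling step trivial: for $Q_k=u_k^{-1}Q$, the inclusion of $Q_k\,\Delta\,Q$ in thin slabs around the faces of $Q$ is immediate coordinatewise.
\item The same slab-trapping would also prove Lemma \ref{le:4.} directly, so your method could be made independent of Lemma \ref{le:3.} altogether.
\end{itemize}

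One small slip: the ball $B(0,R)$ must contain the pulled-back sets $g_k^{-1}(Q_k\,\Delta\,Q)+y_k$ (or simply $C$), not $C-y_k$. This is harmless, since the $Q_k$ and the $y_k$ are uniformly bounded.
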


\begin{prof}
Since $\left\{g_k\right\}\subset O(d)$ converges by $\widetilde{d}$ and $ O(d)$ is compact, there exists $g\in O(d)$ such that $\tilde{d}\left(g_k, g\right) \rightarrow 0$ as $k \rightarrow \infty$. Hence
$\tilde{d}\left(g \circ g_k^{-1}, I_d\right) \rightarrow 0$ as $k \rightarrow \infty$.\\
Since $\left\{y_k\right\} \subset C$ converges and $C$ is compact, then there exists $y\in C$ such that $\mathop {\lim }\limits_{k \rightarrow 0} y_k=y$.\\
For any $k \in \mathbb{N}_{+}$, suppose the vertex sets of $\mathop \Pi \limits_{i = 1}^d [{a_i},{b_i}]$ and $g \circ g_k^{-1} (\mathop \Pi \limits_{i = 1}^d [{{a_i} \over {{u_k}}},{{b_i} \over {{u_k}}}])+g(y_k-y)$ are $\left\{x_1, x_2, \cdots, x_{2^d}\right\}$ and $\left\{x_{k, 1}, x_{k, 2}, \cdots, x_{k, 2^d}\right\}$ respectively,
and $$x_{k, i}=g \circ g_k^{-1}({{x_i} \over {{u_k}}})+g(y_k-y)$$ 
for $i=1,2, \ldots, 2^d$.\\
    There exists a positive number $M$ such that
$$
\mathop \Pi \limits_{i = 1}^d [{a_i},{b_i}]\subset B(0, M).
$$
For any $\varepsilon \in (0,1/2)$, there exists a positive number $K$ such that $$\left|g \circ g_k^{-1}\left(x_i\right)-x_i\right|<{\varepsilon},$$
$$\left|1-u_k\right|<{\varepsilon}$$
and
$$
\left|g\left(y_k-y\right)\right|=\left|y_k-y\right|<{\varepsilon}
$$
for any $k \geqslant K$ and all $i=1,2, \cdots, 2^d$.
We have 
$$
\begin{aligned}
\left|x_{k, i}-x_i\right|&=\left|g \circ g_k^{-1}({{x_i} \over {{u_k}}})+g\left(y_k-y\right)-x_i\right|\\& \leq\left|{1 \over {{u_k}}}g\circ g_k^{-1}\left(x_i\right)-x_i\right|+\left|g\left(y_k-y\right)\right| \\
&\le\left|{1 \over {{u_k}}}g\circ g_k^{-1}\left(x_i\right)-{1 \over {{u_k}}}x_i\right|+\left|({1 \over {{u_k}}}-1)x_i\right|+\left|g\left(y_k-y\right)\right| \\
& <{{\varepsilon} \over {1 - \varepsilon }}+({1 \over {1 - \varepsilon }}-1)M+{\varepsilon} \\
& ={{\varepsilon} \over {1 - \varepsilon }}(1+M)+{\varepsilon} \\
& <(2M+3)\varepsilon 
\end{aligned}
$$
for any $k \geqslant K$ and all $i=1,2, \cdots, 2^d$.\\
Hence $\left|x_{k, i}-x_i\right| \rightarrow 0$ as $k \rightarrow \infty$ for $i=1,2, \cdots, 2^d$.\\
By Lemma \ref{le:3.}, we have
$${{\cal H}^s}(g({C^y}) \cap \mathop \Pi \limits_{i = 1}^d [{a_i},{b_i}]) = \mathop {\lim }\limits_{k \to \infty } {{\cal H}^s}(g({C^y}) \cap (g \circ g_k^{-1} (\mathop \Pi \limits_{i = 1}^d [{{a_i} \over {{u_k}}},{{b_i} \over {{u_k}}}])+g(y_k-y))).$$
That is,
$${{\cal H}^s}(g({C^y} )\cap \mathop \Pi \limits_{i = 1}^d [{a_i},{b_i}]) = \lim _{k \rightarrow \infty} {{\cal H}^s}(g_k( C^{y_k}) \cap \mathop \Pi \limits_{i = 1}^d [{{a_i} \over {{u_k}}},{{b_i} \over {{u_k}}}]).$$
Hence by Lemma \ref{le:4.}, we have
$$\lim _{k \rightarrow \infty} {{\cal H}^s}(g_k( C^{y_k}) \cap \mathop \Pi \limits_{i = 1}^d [{a_i},{b_i}])=\lim _{k \rightarrow \infty} {{\cal H}^s}(g_k( C^{y_k}) \cap \mathop \Pi \limits_{i = 1}^d [{{a_i} \over {{u_k}}},{{b_i} \over {{u_k}}}]) .$$
\qed
\end{prof}

\begin{proposition}\label{pr:5.2}
Let $C\subset{{\mathbb{R}^d}}$ be a self-similar set generated by an IFS $\{ {\varphi _i}= {\beta_i}g_i (x) + {c_i}\} _{i = 0}^{m-1}$, where $0 < {\beta_i}  < 1$, $g_i\in O(d)$ and ${c_i}\in {{\mathbb{R}^d}}$ for $i = 0,1, \cdots ,m - 1$. Suppose $\{ {\varphi _i}\} _{i = 0}^{m-1}$ satisfies the strong separation condition and $C$ is not included in any hyperplane in ${{\mathbb{R}}^d}$. Let $s = {\dim _H}(C)$ and $ \mathop \Pi \limits_{i = 1}^d [{a_i},{b_i}) = [{a_1},{b_1}) \times  \cdots  \times [{a_d},{b_d})$.
Then for any $x \in C$, any $F= (\rho )\mathop {\lim }\limits_{k \to \infty } \alpha {{{\lambda _{{n_k}}}(\mathop x\limits_ -)}}{C^x} \in \omega({C^x})$ with $x=\pi (\mathop x\limits_ -  )$, and any $ {a_i},{b_i} \in {\mathbb{R}}$, ${a_i} < {b_i}$, $i = 1,2, \ldots ,d$,
there exists a subsequence $\{n_{k_l}\} $ of $\{n_k\} $ such that
  \[{{\cal H}^s}(F \cap \mathop \Pi \limits_{i = 1}^d [\mathop {a_i}\limits^ - ,\mathop {b_i}\limits^ -)) = \mathop {\lim }\limits_{l \to \infty } {{\cal H}^s}((\alpha {{{\lambda  _{n_{k_l}}}(\mathop x\limits_ -)}}{C^x}) \cap \mathop \Pi \limits_{i = 1}^d [\mathop {a_i}\limits^ - ,\mathop {b_i}\limits^ -))\]
for any $\mathop {a_i}\limits^ - ,\mathop {b_i}\limits^-$ $\in \mathbb{R}$, $\mathop {a_i}\limits^ - <\mathop {b_i}\limits^-$ satisfying $\mathop \Pi \limits_{i = 1}^d [\mathop {a_i}\limits^ - ,\mathop {b_i}\limits^ -) \subset \mathop \Pi \limits_{i = 1}^d [{a_i},{b_i})$.
\end{proposition}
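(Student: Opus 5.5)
Fix $x$, $\mathop x\limits_ -$, $F$ and the box $\mathop \Pi \limits_{i = 1}^d [{a_i},{b_i})$. Choose a positive integer $N$ with $-\alpha N < a_i < b_i < \alpha N$ for all $i$; then every sub-box $\mathop \Pi \limits_{i = 1}^d [\mathop {a_i}\limits^ - ,\mathop {b_i}\limits^ -)$ of the statement also lies in $(-\alpha N,\alpha N)^d$. I would apply Lemma \ref{le:3.61} (3) with an open box slightly larger than the given one, say $\mathop \Pi \limits_{i = 1}^d (a_i-\delta,b_i+\delta)\subset(-\alpha N,\alpha N)^d$. This gives one subsequence $\{n_{k_l}\}$, the data $\beta(N)$, $h(N)$, $z_N$ with $\tilde d(h^{n_{k_l}+r(N)}(\mathop x\limits_ -),h(N))\to 0$ and $x^*_{n_{k_l}+r(N)}\to z_N$, and two identities. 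First, on the closed box $[-N,N]^d$ scaled by $\alpha$, which contains every sub-box, we have $\alpha\lambda_{n_{k_l}}(\mathop x\limits_ -)C^x = \alpha\beta(N)h^{n_{k_l}+r(N)}(\mathop x\limits_ -)(C^{x^*_{n_{k_l}+r(N)}})$; this comes from Lemma \ref{le:3.61} (2) scaled by $\alpha$ with $\beta(N)$ fixed along the subsequence. Second, $F$ agrees with $\alpha\beta(N)h(N)(C^{z_N})$ on the open enlarged box. The key point is that the subsequence depends only on $N$ (and the enlarged box), not on the sub-box. That is what makes a single subsequence work for all $\mathop {a_i}\limits^ -,\mathop {b_i}\limits^ -$ at once.

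It then suffices to prove, for each fixed sub-box $Q=\mathop \Pi \limits_{i = 1}^d [\mathop {a_i}\limits^ - ,\mathop {b_i}\limits^ -)$,
\[{{\cal H}^s}(\alpha\beta(N)h(N)(C^{z_N})\cap Q)=\lim_{l\to\infty}{{\cal H}^s}(\alpha\beta(N)h^{n_{k_l}+r(N)}(\mathop x\limits_ -)(C^{x^*_{n_{k_l}+r(N)}})\cap Q).\]
Since $Q$ lies inside the open enlarged box, the left side equals ${{\cal H}^s}(F\cap Q)$, and the right side equals the desired limit by the first identity. Dividing by the constant $\alpha\beta(N)$ reduces this to the unscaled statement for the boxes $Q/(\alpha\beta(N))$, because ${{\cal H}^s}$ scales by $(\alpha\beta(N))^s$. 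Lemma \ref{le:4.}, with $g_k=h^{n_{k_l}+r(N)}(\mathop x\limits_ -)$, $g=h(N)$, $y_k=x^*_{n_{k_l}+r(N)}$ and $y=z_N$, gives exactly this convergence, but for closed boxes.

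The main obstacle is the half-open versus closed box issue. I would handle it as follows. Write $Q$ as the closed box $\overline Q$ minus the faces $\{x_i=\mathop {b_i}\limits^ -\}$. These faces lie in hyperplanes, and for every rotated and translated copy of $C$ the hyperplane parts carry zero ${{\cal H}^s}$ measure by Lemma \ref{le:3.7} (1); here we use that $C$ is not contained in a hyperplane. So ${{\cal H}^s}(g(C^y)\cap Q)={{\cal H}^s}(g(C^y)\cap\overline Q)$ for every $g$ and $y$, both for the limit set and for each approximating set. The closed-box convergence of Lemma \ref{le:4.} therefore transfers directly to $Q$.

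One should also check that the enlarged-box trick is legitimate. $F$ and $\alpha\beta(N)h(N)(C^{z_N})$ coincide only on the open box, but $\overline Q$ sits inside it, so the two sets agree on $\overline Q$. The reduction of Lemma \ref{le:3.61} (3) to the closed box $[-\alpha N,\alpha N]^d$, and hence to $\overline Q$, is exact for each $l$, so no limiting error enters there. Putting these pieces together yields the proposition.
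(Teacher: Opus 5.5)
Your proposal is correct and follows the paper's proof. Lemma \ref{le:3.61} (2)--(3) replaces both the rescaled sets $\alpha\lambda_{n_{k_l}}C^x$ and $F$ by rotated copies of $C$ on the box; rescaling by $\alpha\beta(N)$ then reduces the claim to Lemma \ref{le:4.}, and Lemma \ref{le:3.7} (1) handles the half-open versus closed faces. The one difference is that you apply Lemma \ref{le:3.61} (3) to a slightly enlarged open box from the start, so $F$ coincides with $\alpha\beta(N)h(N)(C^{z_N})$ on the closure of every admissible sub-box. This makes the paper's Claim 2 unnecessary: there, ${\cal H}^s(F\cap\partial Q)=0$ is proved by a second application of Lemma \ref{le:3.61} on an enlarged box around the sub-box together with Lemma \ref{le:3.7} (2).
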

\begin{prof}
  The proof of Proposition \ref{pr:5.2} comprises two main steps. Firstly, to complete Proposition \ref{pr:5.2}, we demonstrate that our focus narrows to validating \eqref{E:5.13} by establishing the validity of two essential claims. Next, we furnish the comprehensive proof of \eqref{E:5.13}.\\
 \textbf{Claim 1. } $ {{\cal H}^s}( (\alpha{{{\lambda _{n_{k_l}}}(\mathop x\limits_ -)}}{C^x}) \cap \mathop \Pi \limits_{i = 1}^d [\mathop {a_i}\limits^ - ,\mathop {b_i}\limits^ -) )={{\cal H}^s}((\alpha{\beta (N)} {h^{n_{k_l} +r(N)}}(\mathop x\limits_ -)({{C^{x_{n_{k_l}+r(N)}^*}}}))\cap \mathop \Pi \limits_{i = 1}^d [\mathop {a_i}\limits^ - ,\mathop {b_i}\limits^ -))$ 
 for any $\mathop {a_i}\limits^ - ,\mathop {b_i}\limits^-$ $\in \mathbb{R}$, $\mathop {a_i}\limits^ - <\mathop {b_i}\limits^-$ satisfying $\mathop \Pi \limits_{i = 1}^d [\mathop {a_i}\limits^ - ,\mathop {b_i}\limits^ -) \subset \mathop \Pi \limits_{i = 1}^d [{a_i},{b_i})$.\\
 Indeed, by Lemma \ref{le:3.61} (3), for any $x \in C$, any $F= (\rho )\mathop {\lim }\limits_{k \to \infty } \alpha {{{\lambda _{{n_k}}}(\mathop x\limits_ -)}}{C^x} \in \omega({C^x})$  with $x=\pi (\mathop x\limits_ -  )$, and any $ {a_i},{b_i} \in {\mathbb{R}}$, ${a_i} < {b_i}$, $i = 1,2, \ldots ,d$, 
there exists a positive integer $N$ satisfying $\mathop \Pi \limits_{i = 1}^d [\frac{{a_i}}{\alpha },\frac{{b_i}}{\alpha }]\subset\mathop \Pi \limits_{i = 1}^d [-N,N]$, a subsequence  $\{n_{k_l}\} $ of $\{n_k\} $, $r(N)\in \mathbb{Z}$, $h(N) \in O(d)$, ${z_N} \in C$ and ${\beta(N)}>0$ such that
  \begin{equation}\label{E:5.7}
  (\alpha{{{\lambda _{n_{k_l}}}(\mathop x\limits_ -)}}{C^x}) \cap \mathop \Pi \limits_{i = 1}^d [{a_i},{b_i}] =(\alpha{\beta (N)}{h^{n_{k_l} +r(N)}}(\mathop x\limits_ -) ({{C^{x_{n_{k_l}+r(N)}^*}}}))\cap \mathop \Pi \limits_{i = 1}^d [{a_i},{b_i}] ,
  \end{equation}
  \begin{equation}\label{E:5.8}
  x_{n_{k_l}+r(N)}^* \to {z_N} ,
  \end{equation}
  \begin{equation}\label{E:5.08}
   \tilde d({h^{n_{k_l} +r(N)}}(\mathop x\limits_ -),h(N)) \to 0
  \end{equation}
  and
   \begin{equation}\label{E:5.9}
  F \cap \mathop \Pi \limits_{i = 1}^d 
({a_i},{b_i})= (\alpha {\beta (N)}h(N)({C^{{z_N}}})) \cap \mathop \Pi \limits_{i = 1}^d 
({a_i},{b_i}).
  \end{equation}
    For any $\mathop {a_i}\limits^ - ,\mathop {b_i}\limits^-$ $\in \mathbb{R}$, $\mathop {a_i}\limits^ - <\mathop {b_i}\limits^-$ satisfying $\mathop \Pi \limits_{i = 1}^d [\mathop {a_i}\limits^ - ,\mathop {b_i}\limits^ -) \subset \mathop \Pi \limits_{i = 1}^d [{a_i},{b_i})$, by \eqref{E:5.7} and \eqref{E:5.9}, we have
     \begin{equation}
  (\alpha{{{\lambda _{n_{k_l}}}(\mathop x\limits_ -)}}{C^x}) \cap \mathop \Pi \limits_{i = 1}^d [\mathop {a_i}\limits^ - ,\mathop {b_i}\limits^ -] =(\alpha{\beta (N)}{h^{n_{k_l} +r(N)}}(\mathop x\limits_ -) ({{C^{x_{n_{k_l}+r(N)}^*}}}))\cap \mathop \Pi \limits_{i = 1}^d [\mathop {a_i}\limits^ - ,\mathop {b_i}\limits^ -]
  \end{equation}
    and
    \begin{equation}\label{E:5.11}
     F \cap \mathop \Pi \limits_{i = 1}^d(\mathop {a_i}\limits^ - ,\mathop {b_i}\limits^ -)= (\alpha {\beta (N)}h(N)({C^{{z_N}}})) \cap \mathop \Pi \limits_{i = 1}^d (\mathop {a_i}\limits^ - ,\mathop {b_i}\limits^ -).
    \end{equation}
By Lemma \ref{le:3.7} (1), we have ${{\cal H}^s}(C\cap \partial {\mathop \Pi \limits_{i = 1}^d [{p_i},{q_i}]}) = 0$ and ${{\cal H}^s}(g(C)\cap \partial {\mathop \Pi \limits_{i = 1}^d [{p_i},{q_i}]}) = 0$ for any $ {p_i},{q_i} \in {\mathbb{R}}$, ${p_i} < {q_i}$, $i = 1,2, \ldots ,d$ and any $g \in O(d)$.\\
Hence 
\begin{equation}\label{E:5.07}
 {{\cal H}^s}( (\alpha{{{\lambda _{n_{k_l}}}(\mathop x\limits_ -)}}{C^x}) \cap \mathop \Pi \limits_{i = 1}^d [\mathop {a_i}\limits^ - ,\mathop {b_i}\limits^ -) )={{\cal H}^s}((\alpha{\beta (N)}{h^{n_{k_l} +r(N)}}(\mathop x\limits_ -) ({{C^{x_{n_{k_l}+r(N)}^*}}}))\cap \mathop \Pi \limits_{i = 1}^d [\mathop {a_i}\limits^ - ,\mathop {b_i}\limits^ -)).
  \end{equation}
This completes the proof of Claim 1.\\
\textbf{Claim 2. } ${{\cal H}^s}({F} \cap \partial {\mathop \Pi \limits_{i = 1}^d [\mathop {a_i}\limits^ - ,\mathop {b_i}\limits^ -]})= 0$ and 
${{\cal H}^s}(F \cap \mathop \Pi \limits_{i = 1}^d 
[\mathop {a_i}\limits^ - ,\mathop {b_i}\limits^ -))={{\cal H}^s}((\alpha {\beta (N)}h(N)({C^{{z_N}}})) \cap \mathop \Pi \limits_{i = 1}^d 
[\mathop {a_i}\limits^ - ,\mathop {b_i}\limits^ -))$ for any $\mathop {a_i}\limits^ - ,\mathop {b_i}\limits^-$ $\in \mathbb{R}$, $\mathop {a_i}\limits^ - <\mathop {b_i}\limits^-$ satisfying $\mathop \Pi \limits_{i = 1}^d [\mathop {a_i}\limits^ - ,\mathop {b_i}\limits^ -) \subset \mathop \Pi \limits_{i = 1}^d [{a_i},{b_i})$.\\
   By Lemma \ref{le:3.61}, similarly, given $\varepsilon  > 0$, there exists ${{z_N}^ *} \in C$, ${{h(N)}^ *}\in O(d)$ and ${{\beta (N)}^ *}>0$ such that
  $$F \cap \mathop \Pi \limits_{i = 1}^d (\mathop {a_i}\limits^ - - \varepsilon ,\mathop {b_i}\limits^ - + \varepsilon ) = (\alpha {{\beta (N)}^ *}{{h(N)}^ *}({C^{{{z_N}^ *}}})) \cap \mathop \Pi \limits_{i = 1}^d (\mathop {a_i}\limits^ - - \varepsilon ,\mathop {b_i}\limits^ - + \varepsilon ).$$
  Given a sequence $\{{\varepsilon_n}\}\subset (0,\varepsilon)$ such that ${\varepsilon _n} \to 0$ as $n \to \infty $,
  we have
  $$F \cap {\mathop \Pi \limits_{i = 1}^d (\mathop {a_i}\limits^ - - {\varepsilon_n} ,\mathop {b_i}\limits^ - + {\varepsilon_n})} = (\alpha {{\beta (N)}^ *}{{h(N)}^ *}({C^{{{z_N}^ *}}})) \cap \mathop \Pi \limits_{i = 1}^d (\mathop {a_i}\limits^ - - {\varepsilon_n} ,\mathop {b_i}\limits^ - + {\varepsilon_n} )$$
  and
  $$F \cap {\mathop \Pi \limits_{i = 1}^d (\mathop {a_i}\limits^ - ,\mathop {b_i}\limits^ - )} = (\alpha {{\beta (N)}^ *}{{h(N)}^ *}({C^{{{z_N}^ *}}})) \cap \mathop \Pi \limits_{i = 1}^d (\mathop {a_i}\limits^ -,\mathop {b_i}\limits^ - ).$$
  Then we have
  \[
\begin{aligned}
&{{\cal H}^s}(F \cap \partial {\mathop \Pi \limits_{i = 1}^d [\mathop {a_i}\limits^ - ,\mathop {b_i}\limits^ -]})\\
\le& {{\cal H}^s}(F \cap ({\mathop \Pi \limits_{i = 1}^d (\mathop {a_i}\limits^ - - {\varepsilon_n} ,\mathop {b_i}\limits^ - + {\varepsilon_n} )}\backslash  {\mathop \Pi \limits_{i = 1}^d(\mathop {a_i}\limits^ - ,\mathop {b_i}\limits^ -)}))\\
  = &{{\cal H}^s}((\alpha {{\beta (N)}^ *}{{h(N)}^ *}({C^{{{z_N}^ *}}})) \cap ({\mathop \Pi \limits_{i = 1}^d (\mathop {a_i}\limits^ - - {\varepsilon_n} ,\mathop {b_i}\limits^ - + {\varepsilon_n} )}\backslash \mathop \Pi \limits_{i = 1}^d (\mathop {a_i}\limits^ - ,\mathop {b_i}\limits^ -)))
\end{aligned}
\]
  for any $n \in {{\mathbb{N}}_ + }.$\\
  By Lemma \ref{le:3.7} (2), we have
  \begin{center}
    ${{\cal H}^s}((\alpha {{\beta (N)}^ *}{{h(N)}^ *}({C^{{{z_N}^ *}}})) \cap ({\mathop \Pi \limits_{i = 1}^d (\mathop {a_i}\limits^ - - {\varepsilon_n} ,\mathop {b_i}\limits^ - + {\varepsilon_n} )}\backslash \mathop \Pi \limits_{i = 1}^d (\mathop {a_i}\limits^ - ,\mathop {b_i}\limits^ -))) \to 0$ as ${\varepsilon_n}  \to 0$.
  \end{center}
  Then
     ${{\cal H}^s}({F} \cap \partial {\mathop \Pi \limits_{i = 1}^d [\mathop {a_i}\limits^ - ,\mathop {b_i}\limits^ -]}) = 0$
    for any $\mathop {a_i}\limits^ - ,\mathop {b_i}\limits^-$ $\in \mathbb{R}$, $\mathop {a_i}\limits^ - <\mathop {b_i}\limits^-$ satisfying $\mathop \Pi \limits_{i = 1}^d [\mathop {a_i}\limits^ - ,\mathop {b_i}\limits^ -) \subset \mathop \Pi \limits_{i = 1}^d [{a_i},{b_i})$.\\
By Lemma \ref{le:3.7} and \eqref{E:5.11}, then we have 
\begin{equation}\label{E:5.011}
{{\cal H}^s}(F \cap \mathop \Pi \limits_{i = 1}^d 
[\mathop {a_i}\limits^ - ,\mathop {b_i}\limits^ -))={{\cal H}^s}((\alpha {\beta (N)}{h(N)}({C^{{z_N}}})) \cap \mathop \Pi \limits_{i = 1}^d 
[\mathop {a_i}\limits^ - ,\mathop {b_i}\limits^ -))
\end{equation}
This completes the proof of Claim 2.\\
By Claim 1 and Claim 2, we just need to prove
 \begin{center}
   ${{\cal H}^s}((\alpha{\beta(N)}{h(N)}({C^{z_N}})) \cap  \mathop \Pi \limits_{i = 1}^d[{\mathop {a_i}\limits^ - },{\mathop {b_i}\limits^ - })) = \mathop {\lim }\limits_{l \to \infty } {{\cal H}^s}((\alpha{\beta(N)}{h^{n_{k_l} +r(N)}}(\mathop x\limits_ -)({C^{x_{{n_{k_l}} + r(N)}^ * }})) \cap \mathop \Pi \limits_{i = 1}^d[ {\mathop {a_i}\limits^ - },{\mathop {b_i}\limits^ - })).$
 \end{center}
  For convenience, replace $\frac{\mathop {a_i}\limits^ - }{\alpha \beta (N)}$, $\frac{\mathop {b_i}\limits^ - }{\alpha \beta (N)}$ with ${l_{i,1}}$,${l_{i,2}}$. \\
  We need to prove
\begin{equation}\label{E:5.13}
    \begin{aligned}
{{\cal H}^s}(h(N)({C^{z_N}}) \cap \mathop \Pi \limits_{i = 1}^d[{l_{i,1}},{l_{i,2}})) = \mathop {\lim }\limits_{l \to \infty } {{\cal H}^s}({h^{n_{k_l} +r(N)}}(\mathop x\limits_ -)({C^{x_{{n_{k_l}} +r(N)}^ * }}) \cap \mathop \Pi \limits_{i = 1}^d[{l_{i,1}},{l_{i,2}})).
  \end{aligned}
\end{equation}
Finally, we shall give the proof of \eqref{E:5.13} below.\\
 In fact, because of 
 \eqref{E:5.8} and \eqref{E:5.08}, by Lemma \ref{le:4.},
 we have
 $${{\cal H}^s}(h(N)({C^{z_N}}) \cap \mathop \Pi \limits_{i = 1}^d[{l_{i,1}},{l_{i,2}})) = \mathop {\lim }\limits_{l \to \infty } {{\cal H}^s}({h^{n_{k_l} +r(N)}}(\mathop x\limits_ -)({C^{x_{{n_{k_l}} +r(N)}^ * }}) \cap \mathop \Pi \limits_{i = 1}^d[{l_{i,1}},{l_{i,2}})).$$
 This completes the proof. \qed
\end{prof}
\begin{corollary}\label{co:3.9}
 Let $C$, $\{ {\varphi _i}\} _{i = 0}^{m-1}$ and $s$ be as in Proposition \ref{pr:5.2}.
  Then for any $x \in C$, any $ F = (\rho )\mathop {\lim }\limits_{k \to \infty } \ {e^{{t_k}}}{C^x} \in \omega(C^x)$ and any $ {a_i},{b_i} \in {\mathbb{R}}$, ${a_i} < {b_i}$, $i = 1,2, \ldots ,d$, there exists a subsequence $\{ {e^{{t_{{s_k}}}}}\}  \subset \{ {e^{{t_k}}}\} $ such that
  \begin{equation}\label{E:3.6}
{{\cal H}^s}(F \cap \mathop \Pi \limits_{i = 1}^d [\mathop {a_i}\limits^ - ,\mathop {b_i}\limits^ -)) = \mathop {\lim }\limits_{k \to \infty } {{\cal H}^s}(({e^{{t_{{s_k}}}}}{C^x}) \cap \mathop \Pi \limits_{i = 1}^d [\mathop {a_i}\limits^ - ,\mathop {b_i}\limits^ -))
\end{equation}
for any $\mathop {a_i}\limits^ - ,\mathop {b_i}\limits^-$ $\in \mathbb{R}$, $\mathop {a_i}\limits^ - <\mathop {b_i}\limits^-$ satisfying $\mathop \Pi \limits_{i = 1}^d [\mathop {a_i}\limits^ - ,\mathop {b_i}\limits^ -) \subset \mathop \Pi \limits_{i = 1}^d [{a_i},{b_i})$.
\end{corollary}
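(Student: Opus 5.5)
The plan is to reduce Corollary \ref{co:3.9} to Proposition \ref{pr:5.2} by rewriting the scaling sequence $\{e^{t_k}\}$ in the form $\alpha\lambda_{n_k}(\underline x)$, up to a factor tending to $1$. Lemma \ref{le:5.} then absorbs that factor.

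First, fix $x\in C$ with its (unique, by the strong separation condition) coding $\underline x$, and $F=(\rho)\lim e^{t_k}C^x$. As in the proof of Proposition \ref{pr:3.3}, I would choose $n_k$ with $e^{t_k}\in[\lambda_{n_k}(\underline x),\lambda_{n_k+1}(\underline x))$. Passing to a subsequence $\{t_{s_k}\}$, I get $u_k:=e^{t_{s_k}}/(\alpha\lambda_{n_{s_k}}(\underline x))\to1$ for some $\alpha\in[1,\beta_{\min}^{-1}]$. A subsequence of a $\rho$-convergent sequence has the same limit, and multiplying by $u_k^{-1}\to1$ preserves $\rho$-limits (checked with Lemma \ref{le:3.4}). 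Hence $F=(\rho)\lim\alpha\lambda_{n_{s_k}}(\underline x)C^x$. Now apply Proposition \ref{pr:5.2} to this representation, with the given $a_i,b_i$. It yields a further subsequence, which I relabel $\{s_k\}$, such that $\mathcal H^s(F\cap\Pi[\bar a_i,\bar b_i))=\lim_k\mathcal H^s((\alpha\lambda_{n_{s_k}}C^x)\cap\Pi[\bar a_i,\bar b_i))$ for all sub-boxes.

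It remains to show that the right-hand side does not change when $\alpha\lambda_{n_{s_k}}$ is replaced by $e^{t_{s_k}}=u_k\alpha\lambda_{n_{s_k}}$. Scaling gives $\mathcal H^s((u_kA)\cap Q)=u_k^s\,\mathcal H^s(A\cap u_k^{-1}Q)$, and $u_k^s\to1$. So I need
\[\lim_k\mathcal H^s((\alpha\lambda_{n_{s_k}}C^x)\cap u_k^{-1}Q)=\lim_k\mathcal H^s((\alpha\lambda_{n_{s_k}}C^x)\cap Q)\]
for boxes $Q=\Pi[\bar a_i,\bar b_i)$. Inside a fixed enlarged box, Lemma \ref{le:3.61}(3) (taking $N$ large enough to cover all $u_k^{-1}Q$) lets me write $\alpha\lambda_{n_{s_k}}C^x$ as $\alpha\beta(N)h^{n_{s_k}+r(N)}(\underline x)(C^{x^*_{n_{s_k}+r(N)}})$ along a subsequence. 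Here the rotations converge in $\tilde d$ and the base points converge in $C$. After rescaling by the constant $\alpha\beta(N)$, this is exactly the setting of Lemma \ref{le:5.}. Half-open and closed boxes carry the same measure by Lemma \ref{le:3.7}(1). Lemma \ref{le:5.} then gives the equality of the two limits, and with them the existence of the left-hand limit.

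The main obstacle is bookkeeping the subsequences: the subsequence from Lemma \ref{le:3.61}(3) must be taken before invoking Proposition \ref{pr:5.2}'s conclusion, or inside it, so that a single subsequence works for every sub-box of $\Pi[a_i,b_i)$ simultaneously. I would handle this by choosing the enlarged box $\Pi[a_i-1,b_i+1)$ at the outset and applying Proposition \ref{pr:5.2} and Lemma \ref{le:3.61}(3) to it once. Since $u_k\to1$, all the $u_k^{-1}Q$ with $Q\subset\Pi[a_i,b_i)$ lie inside it for large $k$. Finally, the corollary's statement is about an arbitrary subsequence, so it suffices to have produced one.
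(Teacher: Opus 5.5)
Your proposal is correct and is essentially the paper's own proof. You write $e^{t_{s_k}}=u_k\,\alpha\lambda_{n_k}(\underline{x})$ with $u_k\to1$ as in Proposition \ref{pr:3.3}, apply Proposition \ref{pr:5.2} to the $\alpha\lambda_{n_k}(\underline{x})C^x$ representation, and absorb $u_k$ through the scaling identity, the Lemma \ref{le:3.61}(3) representation on a slightly larger cube, and Lemma \ref{le:5.}; your explicit subsequence bookkeeping (one enlarged box and fixed $N$, with limits preserved under further subsequences) is a cleaner version of the paper's remark that $\{n_k\}$ and $\{s_k\}$ can be chosen to satisfy \eqref{E:3.7}--\eqref{E:3.10} simultaneously.
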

\begin{prof}
  The proof of Corollary \ref{co:3.9} can be established in two steps. Firstly, we only need to prove \eqref{E:3.11} to prove Corollary \ref{co:3.9}. We explain below.\\
  For any $ F =(\rho ) \mathop {\lim }\limits_{k \to \infty } \ {e^{{t_k}}}{C^x} \in \omega(C^x)$, by the proof of Proposition \ref{pr:3.3} and Corollary \ref{co:3.4}, there exists a positive number
  $ \alpha$ and ${n_k} \to \infty $ as $k \to \infty $ such that
  \begin{equation}\label{E:3.7}
  \alpha {{\lambda _{{n_k}}}(\mathop x\limits_ -)}{C^x}\mathop  \to \limits^\rho  F,
  \end{equation}
  where $x=\pi (\mathop x\limits_ -  )$.
  And there exists a subsequence $\{ {e^{{t_{{s_k}}}}}\}  \subset \{ {e^{{t_k}}}\} $ so that
  \begin{equation}\label{E:3.8}
  \mathop {\lim }\limits_{k \to \infty } \frac{{{e^{{t_{{s_k}}}}}}} {{\lambda _{{n_k}}}(\mathop x\limits_ -)} =\alpha  \in [1,{\beta _{\min }} ^{-1}],
  \end{equation}
 where ${\beta _{\min }} = \mathop {\min }\limits_{0 \le i \le m - 1} ({\beta _i})$.\\
  There exists a non-negative integer $ N $ such that $ - \alpha N < {a_i} < {b_i}< \alpha N$, $i = 1,2, \ldots ,d$.
 By Lemma \ref{le:3.61} (2)-(3) and Proposition \ref{pr:5.2}, there exists a sequence $\{ {n_k}\} $ satisfying \eqref{E:3.7}-\eqref{E:3.8}, a integer $r(N)$, ${\beta(N)}>0$, $h(N) \in O(d)$ and ${z_N} \in C$ so that
 \begin{equation}\label{E:3.17}
   \tilde d({h^{n_k +r(N)}}(\mathop x\limits_ -),h(N)) \to 0
  \end{equation}
 \begin{equation}\label{E:3.9}
 x_{{n_k}+r(N)}^* \to {z_N},
  \end{equation}
 \begin{equation}\label{E:3.90}
 (\alpha{{{\lambda _{n_k}}(\mathop x\limits_ -)}}{C^x}) \cap \mathop \Pi \limits_{i = 1}^d [-\alpha N,\alpha N] =(\alpha{\beta (N)} {h^{n_k +r(N)}}(\mathop x\limits_ -)({{C^{x_{n_k+r(N)}^*}}}))\cap \mathop \Pi \limits_{i = 1}^d [-\alpha N,\alpha N] 
  \end{equation}
 and
   \begin{equation}\label{E:3.10}
 {{\cal H}^s}(F \cap \mathop \Pi \limits_{i = 1}^d [\mathop {a_i}\limits^ - ,\mathop {b_i}\limits^ -) ) = \mathop {\lim }\limits_{k \to \infty } {{\cal H}^s}((\alpha {{\lambda _{{n_k}}}(\mathop x\limits_ -)}{C^x}) \cap \mathop \Pi \limits_{i = 1}^d [\mathop {a_i}\limits^ - ,\mathop {b_i}\limits^ -) )
  \end{equation}
  for any $\mathop {a_i}\limits^ - ,\mathop {b_i}\limits^-$ $\in \mathbb{R}$, $\mathop {a_i}\limits^ - <\mathop {b_i}\limits^-$ satisfying $\mathop \Pi \limits_{i = 1}^d [\mathop {a_i}\limits^ - ,\mathop {b_i}\limits^ -) \subset \mathop \Pi \limits_{i = 1}^d [{a_i},{b_i})$.\\
  In fact, we can choose $\{{n_k}\}$ and $\{{s_k}\}$ to simultaneously satisfy \eqref{E:3.7}-\eqref{E:3.10}.\\
  As a result, to prove \eqref{E:3.6}, we only need to prove
   \begin{equation}
  \mathop {\lim }\limits_{k \to \infty } {{\cal H}^s}((\alpha {{\lambda _{{n_k}}}(\mathop x\limits_ -)}{C^x}) \cap \mathop \Pi \limits_{i = 1}^d [\mathop {a_i}\limits^ - ,\mathop {b_i}\limits^ -))=\mathop {\lim }\limits_{k \to \infty } {{\cal H}^s}(( {e^{{{t_{{s_k}}}}}}{C^x}) \cap \mathop \Pi \limits_{i = 1}^d [\mathop {a_i}\limits^ - ,\mathop {b_i}\limits^ -)).
   \end{equation}
  Let ${u_k}=\frac{{{e^{{t_{{s_k}}}}}}}{{\alpha {{\lambda _{{n_k}}}(\mathop x\limits_ -)}}}$, then $\mathop {\lim }\limits_{k \to \infty } {u_k} = 1$.\\
  We have
  $${{\cal H}^s}(({e^{{t_{{s_k}}}}}{C^x}) \cap \mathop \Pi \limits_{i = 1}^d [\mathop {a_i}\limits^ - ,\mathop {b_i}\limits^ -))= {({u_k})^s}{{\cal H}^s}((\alpha{{\lambda _{{n_k}}}(\mathop x\limits_ -)}{C^x}) \cap \mathop \Pi \limits_{i = 1}^d [{{\mathop {a_i}\limits^ -} \over {{u_k}}},{{\mathop {b_i}\limits^ -} \over {{u_k}}}))$$
  and
  $$\mathop {\lim }\limits_{k \to \infty }{{\cal H}^s}(({e^{{t_{{s_k}}}}}{C^x}) \cap \mathop \Pi \limits_{i = 1}^d [\mathop {a_i}\limits^ - ,\mathop {b_i}\limits^ -))= \mathop {\lim }\limits_{k \to \infty }{{\cal H}^s}((\alpha{{\lambda _{{n_k}}}(\mathop x\limits_ -)}{C^x}) \cap \mathop \Pi \limits_{i = 1}^d [{{\mathop {a_i}\limits^ -} \over {{u_k}}},{{\mathop {b_i}\limits^ -} \over {{u_k}}})).$$
  Hence we need to prove
  \begin{equation}
  \mathop {\lim }\limits_{k \to \infty } {{\cal H}^s}((\alpha {{\lambda _{{n_k}}}(\mathop x\limits_ -)}{C^x}) \cap \mathop \Pi \limits_{i = 1}^d [\mathop {a_i}\limits^ - ,\mathop {b_i}\limits^ -))=\mathop {\lim }\limits_{k \to \infty } {{\cal H}^s}((\alpha{{\lambda _{{n_k}}}(\mathop x\limits_ -)}{C^x}) \cap \mathop \Pi \limits_{i = 1}^d [{{\mathop {a_i}\limits^ -} \over {{u_k}}},{{\mathop {b_i}\limits^ -} \over {{u_k}}})).
   \end{equation}
When $k$ is large enough, we have $\mathop \Pi \limits_{i = 1}^d [{{{\mathop {a_i}\limits^ -}} \over { {u_k}}},{{{\mathop {b_i}\limits^ -}} \over {{u_k}}}) \subset \mathop \Pi \limits_{i = 1}^d [-\alpha N,\alpha N)$.\\
 By \eqref{E:3.90}, then
 $$(\alpha{{{\lambda _{n_k}}(\mathop x\limits_ -)}}{C^x}) \cap \mathop \Pi \limits_{i = 1}^d [\mathop {a_i}\limits^ - ,\mathop {b_i}\limits^ -) =(\alpha{\beta (N)}{h^{n_k +r(N)}}(\mathop x\limits_ -) ({{C^{x_{n_k+r(N)}^*}}}))\cap \mathop \Pi \limits_{i = 1}^d [\mathop {a_i}\limits^ - ,\mathop {b_i}\limits^ -)$$
  and
 $$(\alpha{{{\lambda _{n_k}}(\mathop x\limits_ -)}}{C^x}) \cap \mathop \Pi \limits_{i = 1}^d [{{{\mathop {a_i}\limits^ -}} \over { {u_k}}},{{{\mathop {b_i}\limits^ -}} \over {{u_k}}}) =(\alpha{\beta (N)} {h^{n_k +r(N)}}(\mathop x\limits_ -)({{C^{x_{n_k+r(N)}^*}}}))\cap \mathop \Pi \limits_{i = 1}^d [{{{\mathop {a_i}\limits^ -}} \over { {u_k}}},{{{\mathop {b_i}\limits^ -}} \over {{u_k}}}).$$ 
 Hence we need to prove
 $$
 \mathop {\lim }\limits_{k \to \infty } {{\cal H}^s}((\alpha{\beta (N)} {h^{n_k +r(N)}}(\mathop x\limits_ -)({C^{x_{{n_k} + r(N)}^*}})) \cap \mathop \Pi \limits_{i = 1}^d [\mathop {a_i}\limits^ - ,\mathop {b_i}\limits^ -))$$
 $$=\mathop {\lim }\limits_{k \to \infty }{{\cal H}^s}( (\alpha{\beta (N)} {h^{n_k +r(N)}}(\mathop x\limits_ -)({C^{x_{{n_k} + r(N)}^*}}))\cap \mathop \Pi \limits_{i = 1}^d [{{{\mathop {a_i}\limits^ -}} \over { {u_k}}},{{{\mathop {b_i}\limits^ -}} \over {{u_k}}})).
 $$
For convenience, replace $\frac{\mathop {a_i}\limits^ - }{\alpha \beta (N)}$, $\frac{\mathop {b_i}\limits^ - }{\alpha \beta (N)}$ with ${l_{i,1}}$,${l_{i,2}}$.\\
  Then we just need to prove
 \begin{equation}\label{E:3.11}
\begin{aligned}
&\mathop {\lim }\limits_{k \to \infty } {{\cal H}^s}({h^{n_k +r(N)}}(\mathop x\limits_ -)({C^{x_{{n_k} +r(N)}^*}}) \cap \mathop \Pi \limits_{i = 1}^d[{l_{i,1}},{l_{i,2}}))\\
=&\mathop {\lim }\limits_{k \to \infty } {{\cal H}^s}({h^{n_k +r(N)}}(\mathop x\limits_ -)({C^{x_{{n_k} + r(N)}^*}}) \cap  \mathop \Pi \limits_{i = 1}^d[{{{l_{i,1}}} \over {{u_k}}},{{{l_{i,2}}} \over {{u_k}}})).
\end{aligned}
\end{equation}
 At this time, because of 
 \eqref{E:3.17} and \eqref{E:3.9}, by Lemma \ref{le:5.},
  then \eqref{E:3.11} holds.\\
  This completes the proof. \qed
\end{prof}

\begin{corollary}\label{co:3.11}
Let $C$, $\{ {\varphi _i}\} _{i = 0}^{m-1}$ and $s$ be as in Proposition \ref{pr:5.2}. 
 Then for any $x \in C$ and any $ F = (\rho )\mathop {\lim }\limits_{k \to \infty } \ {e^{{t_k}}}{C^x} \in \omega(C^x)$, there exists a subsequence $\{ {e^{{t_{{s_k}}}}}\}  \subset \{ {e^{{t_k}}}\} $ such that
  \begin{equation}
{{\cal H}^s}(F \cap \mathop \Pi \limits_{i = 1}^d [\mathop {a_i}\limits^ - ,\mathop {b_i}\limits^ -)) = \mathop {\lim }\limits_{k \to \infty } {{\cal H}^s}(({e^{{t_{{s_k}}}}}{C^x}) \cap \mathop \Pi \limits_{i = 1}^d [\mathop {a_i}\limits^ - ,\mathop {b_i}\limits^ -))
\end{equation}
for any $ \mathop {a_i}\limits^ - ,\mathop {b_i}\limits^ - \in {\mathbb{R}}$, $\mathop {a_i}\limits^ - <\mathop {b_i}\limits^ -$, $i = 1,2, \ldots ,d$.
\end{corollary}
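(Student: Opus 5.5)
The plan is to upgrade Corollary \ref{co:3.9}, which gives a subsequence adapted to one fixed box $\prod_{i=1}^d[a_i,b_i)$, to a single subsequence that works for every half-open box. I will do this by a diagonal argument over an exhausting family of boxes.

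First I fix the exhaustion. For $N\in\mathbb{N}_+$ let $Q_N=\prod_{i=1}^d[-N,N)$. Every box $\prod_{i=1}^d[\bar a_i,\bar b_i)$ is contained in $Q_N$ for all sufficiently large $N$, because its closure is compact.

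Next I build nested subsequences. Start from the given sequence $\{t_k\}$ with $e^{t_k}C^x\to F$ in $\rho$.
\begin{itemize}
\item Apply Corollary \ref{co:3.9} with the box $Q_1$. This gives a subsequence $\{t^{(1)}_k\}\subset\{t_k\}$ such that the stated equality holds for every box contained in $Q_1$.
\item Any subsequence still satisfies $e^{t^{(1)}_k}C^x\to F$. So $F\in\omega(C^x)$ is again represented by $\{t^{(1)}_k\}$, and Corollary \ref{co:3.9} applies to it with the box $Q_2$. This yields $\{t^{(2)}_k\}\subset\{t^{(1)}_k\}$ that works for every box contained in $Q_2$.
\item Inductively, $\{t^{(N)}_k\}\subset\{t^{(N-1)}_k\}$ works for every box contained in $Q_N$.
\end{itemize}
One point needs checking here. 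The conclusion of Corollary \ref{co:3.9} is an equality between $\mathcal{H}^s(F\cap\cdot)$ and a limit along the subsequence. Once the limit exists along a sequence, it exists with the same value along every further subsequence. Hence $\{t^{(N)}_k\}$ still works for all boxes inside $Q_{N'}$ for every $N'\le N$.

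Then I take the diagonal sequence $t_{s_k}:=t^{(k)}_k$. For any box $\prod_{i=1}^d[\bar a_i,\bar b_i)$, choose $N$ with the box contained in $Q_N$. For $k\ge N$ the terms $t_{s_k}$ lie in $\{t^{(N)}_j\}$, so $\{t_{s_k}\}_{k\ge N}$ is a subsequence of $\{t^{(N)}_j\}$. The limit of $\mathcal{H}^s((e^{t_{s_k}}C^x)\cap\prod_{i=1}^d[\bar a_i,\bar b_i))$ therefore exists and equals $\mathcal{H}^s(F\cap\prod_{i=1}^d[\bar a_i,\bar b_i))$. The diagonal terms must also be strictly increasing in the original index; this holds if each extraction is taken as a strictly increasing subsequence.

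The main obstacle is justifying that Corollary \ref{co:3.9} can be reapplied to a subsequence. Its proof passes through a representation $\alpha\lambda_{n_k}(\underline{x})C^x\to F$ and a further extraction (Proposition \ref{pr:5.2}, Lemma \ref{le:3.61}). I must make sure the extracted $\{t_{s_k}\}$ is genuinely a subsequence of the input sequence, not merely some sequence converging to $F$. That is the form in which the corollary is stated, so the induction is legitimate.
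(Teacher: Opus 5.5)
Your proposal is correct and follows the paper's proof essentially verbatim. The paper likewise applies Corollary \ref{co:3.9} successively to the boxes $\prod_{i=1}^d[-j,j)$, extracting nested subsequences that still converge to $F$, and then takes the diagonal sequence $e^{t_{s_k}}=e^{t_{k,k}}$; your extra checks (limits persist along further subsequences, and the diagonal tail from index $N$ on is a subsequence of the $N$-th extraction) are exactly the points the paper uses implicitly.
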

\begin{prof}
For any $x \in C$ and any $ F = (\rho )\mathop {\lim }\limits_{k \to \infty } \ {e^{{t_k}}}{C^x} \in \omega(C^x)$, we replace $ \mathop \Pi \limits_{i = 1}^d [{a_i},{b_i})$ in Corollary \ref{co:3.9} with $\mathop \Pi \limits_{i = 1}^d [- j, j)$, where $
j\in {{\mathbb{N}}_+ }$.\\
(1)When $j=1$, by Corollary \ref{co:3.9}, there exists a monotonically increasing subsequence $\{e^{{t_{1,k}}}\} $ of $ \{ {e^{{t_k}}}\} $ such that
$$F =(\rho ) \mathop {\lim }\limits_{k \to \infty } {e^{{t_{1,k}}}}{C^x}$$
 and
 \begin{eqnarray}
  {{\cal H}^s}(F \cap \mathop \Pi \limits_{i = 1}^d [\mathop {a_i}\limits^ - ,\mathop {b_i}\limits^ -)) = \mathop {\lim }\limits_{k \to \infty } {{\cal H}^s}(({e^{{t_{1,k}}}}{C^x}) \cap \mathop \Pi \limits_{i = 1}^d [\mathop {a_i}\limits^ - ,\mathop {b_i}\limits^ -))
  \end{eqnarray}
for any $ \mathop {a_i}\limits^ - ,\mathop {b_i}\limits^ - \in {\mathbb{R}}$, $\mathop {a_i}\limits^ - <\mathop {b_i}\limits^ -$ satisfying $\mathop \Pi \limits_{i = 1}^d [\mathop {a_i}\limits^ - ,\mathop {b_i}\limits^ -) \subset \mathop \Pi \limits_{i = 1}^d[-1,1)$.\\
(2)When $j=2$, by Corollary \ref{co:3.9}, there exists a subsequence $\{e^{t_{{2,k}}}\}\subset \{e^{{t_{1,k}}}\} $
such that
$$F =(\rho ) \mathop {\lim }\limits_{k \to \infty } {e^{{t_{2,k}}}}{C^x}$$
 and
 \begin{eqnarray}
  {{\cal H}^s}(F \cap \mathop \Pi \limits_{i = 1}^d [\mathop {a_i}\limits^ - ,\mathop {b_i}\limits^ -)) = \mathop {\lim }\limits_{k \to \infty } {{\cal H}^s}(({e^{{t_{2,k}}}}{C^x}) \cap \mathop \Pi \limits_{i = 1}^d [\mathop {a_i}\limits^ - ,\mathop {b_i}\limits^ -))
  \end{eqnarray}
for any $ \mathop {a_i}\limits^ - ,\mathop {b_i}\limits^ - \in {\mathbb{R}}$, $\mathop {a_i}\limits^ - <\mathop {b_i}\limits^ -$ satisfying $\mathop \Pi \limits_{i = 1}^d [\mathop {a_i}\limits^ - ,\mathop {b_i}\limits^ -) \subset \mathop \Pi \limits_{i = 1}^d[-2,2)$.\\
   (3) When $j>2$, by Corollary \ref{co:3.9}, there exists a subsequence $\{e^{{t_{j,k}}}\}\subset \{e^{{t_{j-1,k}}}\} $
such that 
   $$F =(\rho ) \mathop {\lim }\limits_{k \to \infty } {e^{{t_{j,k}}}}{C^x}$$
 and
\begin{eqnarray}
  {{\cal H}^s}(F \cap \mathop \Pi \limits_{i = 1}^d [\mathop {a_i}\limits^ - ,\mathop {b_i}\limits^ -)) = \mathop {\lim }\limits_{k \to \infty } {{\cal H}^s}(({e^{{t_{j,k}}}}{C^x}) \cap \mathop \Pi \limits_{i = 1}^d [\mathop {a_i}\limits^ - ,\mathop {b_i}\limits^ -))
  \end{eqnarray}
for any $ \mathop {a_i}\limits^ - ,\mathop {b_i}\limits^ - \in {\mathbb{R}}$, $\mathop {a_i}\limits^ - <\mathop {b_i}\limits^ -$ satisfying $\mathop \Pi \limits_{i = 1}^d [\mathop {a_i}\limits^ - ,\mathop {b_i}\limits^ -) \subset \mathop \Pi \limits_{i = 1}^d[-j,j)$.\\
By the diagonal principle, we obtain a new sequence $\{e^{{t_{s_k}}}\} $ by setting ${e^{{t_{s_k}}}} ={e^{{t_{k,k}}}} $.\\
Since $\{ {e^{{t_{s_k}}}}\} _{k = j}^\infty $ is a subsequence of $\{e^{{t_{j,k}}}\}_{k = 1}^\infty$ for any $j \in {{\mathbb{N}}_+ }$,
hence
$$F =(\rho ) \mathop {\lim }\limits_{k \to \infty } {e^{{t_{s_k}}}}{C^x}$$
and
\begin{equation}
{{\cal H}^s}(F \cap \mathop \Pi \limits_{i = 1}^d [\mathop {a_i}\limits^ - ,\mathop {b_i}\limits^ -)) = \mathop {\lim }\limits_{k \to \infty } {{\cal H}^s}(( {e^{{t_{{s_k}}}}}{C^x}) \cap \mathop \Pi \limits_{i = 1}^d [\mathop {a_i}\limits^ - ,\mathop {b_i}\limits^ -))
\end{equation}
for any $ \mathop {a_i}\limits^ - ,\mathop {b_i}\limits^ - \in {\mathbb{R}}$, $\mathop {a_i}\limits^ - <\mathop {b_i}\limits^ -$, $i = 1,2, \ldots ,d$.\qed
\end{prof}
 
\section{Proof of Theorem 1.1}
 In this section, we prove some results about tangent measures on self-similar sets in ${{\mathbb{R}^d}}$ which satisfy the strong separation condition by Corollary \ref{co:3.11}. 
\begin{theorem}\label{th:4.1}
Let $C\subset{{\mathbb{R}^d}}$ be a self-similar set satisfying the strong separation condition with $s = {\dim _H}(C)$. Suppose $C$ is not included in any hyperplane in ${{\mathbb{R}}^d}$. Then for any $x \in C$ and any $ F = (\rho )\mathop {\lim }\limits_{k \to \infty } \ {e^{{t_k}}}{C^x} \in \omega(C^x)$, there exists a subsequence $\{ {e^{{t_{{s_k}}}}}\}  \subset \{ {e^{{t_k}}}\} $ such that
   $${{\cal H}^s}\lfloor _{{e^{{t_{s_k}}}}{C^x}}\mathop  \to \limits^w {{\cal H}^s}\lfloor _F.$$
\end{theorem}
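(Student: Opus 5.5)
The plan is to derive weak convergence from the convergence of measures of half-open boxes that Corollary \ref{co:3.11} provides, followed by a standard approximation of test functions. Fix $x\in C$ and $F=(\rho)\lim_{k\to\infty}e^{t_k}C^x\in\omega(C^x)$. By Corollary \ref{co:3.11} there is a subsequence $\{e^{t_{s_k}}\}$ such that, writing $\mu_k={\cal H}^s\lfloor_{e^{t_{s_k}}C^x}$ and $\nu={\cal H}^s\lfloor_F$,
$$\nu\Big(\mathop\Pi\limits_{i=1}^d[\mathop{a_i}\limits^-,\mathop{b_i}\limits^-)\Big)=\lim_{k\to\infty}\mu_k\Big(\mathop\Pi\limits_{i=1}^d[\mathop{a_i}\limits^-,\mathop{b_i}\limits^-)\Big)$$
for every half-open box. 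I would use this same subsequence and show $\int\phi\,d\mu_k\to\int\phi\,d\nu$ for every $\phi\in C_c(\mathbb{R}^d)$.

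First I record that $\nu$ is a Radon measure, i.e.\ locally finite. Take a large box $Q=\Pi_{i=1}^d[-j,j)$ containing $\operatorname{spt}\phi$. Then $\nu(Q)=\lim_k\mu_k(Q)$, and by Theorem \ref{th:2.5} we have $\mu_k(Q)\le e^{st_{s_k}}{\cal H}^s(C\cap B(x,\sqrt d\,j e^{-t_{s_k}}))\le b(2\sqrt d\, j)^s$ uniformly in $k$ for large $k$. So $M:=\sup_k\mu_k(Q)<\infty$ and $\nu(Q)\le M$. Alternatively, local finiteness follows from Claim 2 in the proof of Proposition \ref{pr:5.2}, which identifies $F$ locally with a similar copy of $C$.

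Next, given $\varepsilon>0$, uniform continuity of $\phi$ gives $\delta>0$ such that $|\phi(y)-\phi(z)|<\varepsilon$ whenever $|y-z|<\delta\sqrt d$. I partition $Q$ into finitely many disjoint half-open boxes $Q_1,\dots,Q_L$ of side less than $\delta$, pick a point $y_l\in Q_l$, and set $\psi=\sum_l\phi(y_l)\chi_{Q_l}$. Then $|\phi-\psi|<\varepsilon$ on $Q$, and both $\phi$ and $\psi$ vanish off $Q$. This gives
$$\Big|\int\phi\,d\mu_k-\int\phi\,d\nu\Big|\le\varepsilon(\mu_k(Q)+\nu(Q))+\sum_{l=1}^L|\phi(y_l)|\,|\mu_k(Q_l)-\nu(Q_l)|.$$
The last sum tends to $0$ as $k\to\infty$ by the box convergence, since $L$ is fixed. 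Hence $\limsup_k|\int\phi\,d\mu_k-\int\phi\,d\nu|\le2M\varepsilon$, and letting $\varepsilon\to0$ gives the weak convergence.

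The main obstacle is the uniform mass bound $\sup_k\mu_k(Q)<\infty$. Weak convergence needs this bound, and box-by-box convergence alone would not suffice without it. The bound comes from the upper estimate in Theorem \ref{th:2.5} (equivalently, Remark \ref{re:2.8}) together with the scaling identity ${\cal H}^s(e^{t}A)=e^{st}{\cal H}^s(A)$, once $k$ is large enough that $\sqrt d\,je^{-t_{s_k}}\le1$. Everything else is the routine step-function approximation above. The boundary issues are already handled, because the boxes are half-open and tile $Q$ exactly, and Corollary \ref{co:3.11} is stated for half-open boxes.
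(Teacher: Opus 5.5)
Your proposal is correct and follows essentially the same route as the paper: box-by-box convergence on half-open boxes from Corollary~\ref{co:3.11}, approximation of the test function by step functions on disjoint half-open boxes, and a uniform mass bound coming from the upper estimate for self-similar sets with the open set condition. The only minor difference is that you bound $\sup_k \mu_k(Q)$ directly from the uniform ball estimate of Theorem~\ref{th:2.5}, whereas the paper bounds the limit mass $\mathcal{H}^s(F\cap[-l,l]^d)$ via the upper density in Remark~\ref{re:2.8}; your version is slightly cleaner.
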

\begin{prof}
By Corollary \ref{co:3.11}, then for any $x \in C$ and any $ F = (\rho )\mathop {\lim }\limits_{k \to \infty } \ {e^{{t_k}}}{C^x} \in \omega(C^x)$, there exists a subsequence $\{ {e^{{t_{{s_k}}}}}\}  \subset \{ {e^{{t_k}}}\} $ such that
  \begin{equation}\label{E:4.1}
{{\cal H}^s}(F \cap \mathop \Pi \limits_{i = 1}^d [\mathop {a_i}\limits^ - ,\mathop {b_i}\limits^ -)) = \mathop {\lim }\limits_{k \to \infty } {{\cal H}^s}(({e^{{t_{{s_k}}}}}{C^x}) \cap \mathop \Pi \limits_{i = 1}^d [\mathop {a_i}\limits^ - ,\mathop {b_i}\limits^ -))
\end{equation}
for any $ \mathop {a_i}\limits^ - ,\mathop {b_i}\limits^ - \in {\mathbb{R}}$, $\mathop {a_i}\limits^ - <\mathop {b_i}\limits^ -$, $i = 1,2, \ldots ,d$.\\
 We shall only prove
$${{\cal H}^s}\lfloor _{e^{{t_{s_k}}}{C^x}}\mathop  \to \limits^w {{\cal H}^s}\lfloor _F.$$
That is, we shall prove
\begin{eqnarray}\label{E:4.11}
  \mathop {\lim }\limits_{k \to \infty } \int {fd{\mu _k} = \int {fd{\mu _0}} } \; for \; all \; f \in {C_c}({{\mathbb{R}^d}}),
  \end{eqnarray}
  where ${\mu _k} = {{\cal H}^s}\lfloor_{e^{{t_{s_k}}}{C^x}}$, ${\mu _0} = {{\cal H}^s}\lfloor _F$,  and ${C_c}({{\mathbb{R}^d}})$ denotes the space of compactly supported continuous real-valued functions on ${{\mathbb{R}^d}}$.\\
   Fix any $f \in {C_c}({{\mathbb{R}^d}})$, there exists a positive integer $l$ so that $\operatorname{spt} f\subset \mathop \Pi \limits_{i = 1}^d [ - l,l)$, where $\operatorname{spt} f = \overline {\{ x \in {{\mathbb{R}^d}}:f(x) \ne 0\} }$.\\
   We only need to prove
  $$\mathop {\lim }\limits_{k \to \infty } \int_{\mathop \Pi \limits_{i = 1}^d [ - l,l)} {fd{\mu _{k}} = } \int_{\mathop \Pi \limits_{i = 1}^d [ - l,l)} {fd{\mu _0}.}$$
  First, if $\varphi $ is a step function, i.e. $\varphi=\sum\limits_{j = 1}^v {{u_j}{\chi _{ {D_j}}}} $ with $ {D_j}=\mathop \Pi \limits_{i = 1}^d[{x_{i,j}},{y_{i,j}})$, $\mathop {\mathop  \cup \limits_{j = 1} }\limits^v  {D_j} \subset \mathop \Pi \limits_{i = 1}^d [ - l,l)$ and $ {D_j} \cap  {D_k} = \emptyset $ for any $j \ne k \in \{ 1,2, \cdots ,v\} $,
  then $$\int {\varphi d{\mu _{k}}}  = \sum\limits_{j = 1}^v {{u_j}{\mu _{k}}( {D_j})} $$
 and
  $$\int {\varphi d{\mu _0}}  = \sum\limits_{j = 1}^v {{u_j}{\mu _0}({D_j})} .$$
  By \eqref{E:4.1}, we have
 \[{{\cal H}^s}(F \cap {D_j}) = \mathop {\lim }\limits_{k \to \infty } {{\cal H}^s}(({e^{{t_{s_k}}}}{C^x}) \cap  {D_j})\]
  for $j = 1,2, \cdots ,v$.\\
  That is, $${\mu _0}({D_j}) =\mathop {\lim }\limits_{k \to \infty }{\mu _{k}}( {D_j})$$ for $j = 1,2, \cdots ,v$.\\
  Hence
  $$\sum\limits_{j = 1}^v {{u_j}{\mu _0}( {D_j})}= \mathop {\lim }\limits_{k \to \infty }(\sum\limits_{j = 1}^v {{u_j}{\mu _{k}}( {D_j})}).$$
  we have
  \begin{equation}\label{E:4.3}
\int {\varphi d{\mu _{k}}}  \to \int {\varphi d{\mu _0}} .
\end{equation}
 Next, we take a sequence of step functions
   $\{ {f_n}\} $ so that $\operatorname{spt} {f_n} \subset \mathop \Pi \limits_{i = 1}^d[ - l,l)$, ${f_n} \to f$ uniformly and $\{ {f_n}\} $ is uniformly bounded.\\
   By Lebesgue's dominated convergence theorem and \eqref{E:4.3}, we have
$$\mathop {\lim }\limits_{n \to \infty } \mathop {\lim }\limits_{k \to \infty } \int {{f_n}d{\mu _{k}}}  = \mathop {\lim }\limits_{n \to \infty } \int {{f_n}d{\mu _0}}  = \int {\mathop {\lim }\limits_{n \to \infty } {f_n}d{\mu _0}}  = \int {fd{\mu _0}}.$$
   Hence we only need to prove
   $$\mathop {\lim }\limits_{k \to \infty } \int {fd{\mu _{k}}}  = \mathop {\lim }\limits_{n \to \infty } \mathop {\lim }\limits_{k \to \infty } \int {{f_n}d{\mu _{k}}} .$$
  We just need to prove
  \begin{equation}\label{E:4.4}
\mathop {\lim }\limits_{n \to \infty } (\mathop {\lim \sup}\limits_{k \to \infty }  \int_{\mathop \Pi \limits_{i = 1}^d[ - l,l)} {\left| {{f_n} - f} \right|d{\mu _{k}}) = } 0.
\end{equation}
   Since ${f_n} \to f$ uniformly, given any $\varepsilon>0 $, there exists a positive integer $N_1$ such that ${\left| {{f_n} - f} \right| < \varepsilon }$ for each $n > {N_1}$.
   Hence for each $n > {N_1}$,
   $$\int_{\mathop \Pi \limits_{i = 1}^d[ - l,l)} {\left| {{f_n} - f} \right|d{\mu _{k}} < } {\mu _{k}}(\mathop \Pi \limits_{i = 1}^d[ - l,l)) \cdot \varepsilon  = {{\cal H}^s}(({e^{{t_{{s_k}}}}}{C^x})  \cap \mathop \Pi \limits_{i = 1}^d[ - l,l)) \cdot \varepsilon .$$
By \eqref{E:4.1}, we have
${{\cal H}^s}(({e^{{t_{{s_k}}}}}{C^x} )\cap \mathop \Pi \limits_{i = 1}^d[ - l,l)) \to {{\cal H}^s}(F \cap \mathop \Pi \limits_{i = 1}^d[ - l,l))$.
By Lemma \ref{le:3.7} (1) and Claim 2 of Proposition \ref{pr:5.2}, we also have
${{\cal H}^s}(({e^{{t_{{s_k}}}}}{C^x}) \cap \mathop \Pi \limits_{i = 1}^d[ - l,l]) \to {{\cal H}^s}(F \cap \mathop \Pi \limits_{i = 1}^d[ - l,l])$.\\
Hence we have
$$\mathop {\lim \sup}\limits_{k \to \infty} \int_{\mathop \Pi \limits_{i = 1}^d[ - l,l)} {\left| {{f_n} - f} \right|d{\mu _{k}} \le } {{\cal H}^s}(F \cap \mathop \Pi \limits_{i = 1}^d[ - l,l]) \cdot \varepsilon $$
for each $n > {N_1}$.\\
So if ${{\cal H}^s}(F \cap \mathop \Pi \limits_{i = 1}^d[ - l,l])$ is finite, then \eqref{E:4.4} holds.\\
Since
$({e^{{t_{{s_k}}}}}{C^x}) \cap B(0,l)\subset ({e^{{t_{{s_k}}}}}{C^x}) \cap \mathop \Pi \limits_{i = 1}^d[ - l,l] \subset ({e^{{t_{{s_k}}}}}{C^x}) \cap B(0,\sqrt d  \cdot l)$,
${{\cal H}^s}(({e^{{t_{{s_k}}}}}{C^x}) \cap B(0,l)) \le {{\cal H}^s}(({e^{{t_{{s_k}}}}}{C^x}) \cap \mathop \Pi \limits_{i = 1}^d[ - l,l])\le {{\cal H}^s}(({e^{{t_{{s_k}}}}}{C^x} )\cap B(0,\sqrt d  \cdot l))$.
Hence we have 
$$\mathop {\lim  \sup}\limits_{k \to \infty } {{\cal H}^s}(({e^{{t_{{s_k}}}}}{C^x}) \cap B(0,l)) \le{{\cal H}^s}(F \cap \mathop \Pi \limits_{i = 1}^d[ - l,l])\le \mathop {\lim \inf}\limits_{k \to \infty }  {{\cal H}^s}(({e^{{t_{{s_k}}}}}{C^x}) \cap B(0,\sqrt d  \cdot l)).$$
Let ${r_k} = {e^{ - {t_{{s_k}}}}}$, by Definition \ref{de:2.7}, we have
    \begin{align*}
      \mathop {\lim \sup}\limits_{k \to \infty }  {{\cal H}^s}(({e^{{t_{{s_k}}}}}{C^x}) \cap B(0,l))
      =&\mathop {\lim \sup}\limits_{r_k \to 0}  {{\cal H}^s}(\frac{C^x}{{{r_k}}} \cap B(0,l)) \\
      =& \mathop {\lim \sup}\limits_{r_k \to 0}  {{{{(2l)}^s}{{\cal H}^s}(C \cap B(x,l{r_{k}}))} \over {{{(2l{r_{k}})}^s}}}\\
      \ge& {(2l)^s}\theta _*^s(C,x)
    \end{align*}
and
\begin{align*}
    \mathop {\lim \inf}\limits_{k \to \infty }  {{\cal H}^s}(({e^{{t_{{s_k}}}}}{C^x}) \cap B(0,\sqrt d  \cdot l))&=\mathop {\lim \inf}\limits_{{r_{k}} \to 0} {{\cal H}^s}(\frac{C^x}{{{r_k}}} \cap B(0,\sqrt d  \cdot l))\\
    &= \mathop {\lim \inf}\limits_{{r_{k}} \to 0}  {{{{(2\sqrt d  \cdot l)}^s}{{\cal H}^s}(C \cap B(x,\sqrt d  \cdot l{r_{k}}))} \over {{{(2\sqrt d  \cdot l{r_{k}})}^s}}}\\
    &\le {(2\sqrt d  \cdot l)^s}{\theta ^{*s}}(C,x).
\end{align*}
Hence we have
$${{\cal H}^s}(F \cap \mathop \Pi \limits_{i = 1}^d[ - l,l]) \in [{(2l)^s}\theta _*^s(C,x),{(2\sqrt d  \cdot l)^s}{\theta ^{*s}}(C,x)].$$
By Remark \ref{re:2.8}, ${{\cal H}^s}(F \cap \mathop \Pi \limits_{i = 1}^d[ - l,l])$ is finite.
This completes the proof of \eqref{E:4.11}. \\
Hence
  $${{\cal H}^s}\lfloor _{{e^{{t_{s_k}}}}{C^x}}\mathop  \to \limits^w {{\cal H}^s}\lfloor _F.$$
This completes the proof.\qed
\end{prof}

\begin{corollary}\label{co:4.3}
Let $C$ be as in Theorem \ref{th:4.1}. Let $\mu  = {{\cal H}^s}\lfloor_C$ with $s = {\dim _H}(C)$. Then for any $x \in C$, we have
$${\mathop{\rm Tan}\nolimits} (\mu ,x) \supset \{ {c{\cal H}^s}\lfloor_ F| F \in \omega({C^x}),\mbox{c}>0\}. $$
\end{corollary}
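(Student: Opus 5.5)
The plan is to combine Theorem \ref{th:4.1} with the characterization of tangent measures in Lemma \ref{le:2.12}; essentially no new estimates are needed. Fix $x\in C$, a limit model $F\in\omega(C^x)$ and a constant $c>0$. By Definition \ref{de:3.1} there are positive numbers $t_k\to\infty$ with $e^{t_k}C^x\mathop\to\limits^\rho F$. Theorem \ref{th:4.1} then supplies a subsequence $\{e^{t_{s_k}}\}$ such that
$${\cal H}^s\lfloor_{e^{t_{s_k}}C^x}\mathop\to\limits^w {\cal H}^s\lfloor_F .$$
Multiplying by $c$ preserves weak convergence, since $\int f\,d(c\mu_k)=c\int f\,d\mu_k$ for every $f\in C_c(\mathbb{R}^d)$. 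Hence $c\,{\cal H}^s\lfloor_{e^{t_{s_k}}C^x}\mathop\to\limits^w c\,{\cal H}^s\lfloor_F$.

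Next, put $r_k=e^{-t_{s_k}}\to 0$, so that $e^{t_{s_k}}C^x=C^x/r_k$. The second description in Lemma \ref{le:2.12} says that any weak limit of $c\,{\cal H}^s\lfloor_{C^x/r_k}$ with $r_k\to0$ and $c>0$ is a tangent measure of $\mu$ at $x$. Therefore $c\,{\cal H}^s\lfloor_F\in\operatorname{Tan}(\mu,x)$, and since $F$ and $c$ were arbitrary, the claimed inclusion follows.

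It is worth seeing why this agrees with Definition \ref{de:2.10}. For a Borel set $A$,
$$\mu_{x,r}(A)=\mathcal{H}^s\bigl(C\cap(x+rA)\bigr)=r^s\,\mathcal{H}^s\bigl((C^x/r)\cap A\bigr),$$
so $\mathcal{H}^s\lfloor_{C^x/r}=r^{-s}\mu_{x,r}$. Choosing the normalizing constants $c_i=c\,r_i^{-s}$ in Definition \ref{de:2.10} gives exactly $c\,{\cal H}^s\lfloor_{C^x/r_i}$. This means the conclusion could even be obtained directly from the definition, without appealing to Lemma \ref{le:2.12}.

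The only step needing care is the hypothesis of Theorem \ref{th:4.1} that $C$ is not contained in a hyperplane. I will handle it through Remark \ref{re:1.3}. If $C$ lies in an affine subspace $P$, then ${\cal H}^s$ restricted to sets in $P$ is the same whether computed in $P$ or in $\mathbb{R}^d$. Hit-and-miss convergence of subsets of $P$ also agrees in the two ambient spaces. Tangent measures computed inside $P$ are therefore the same measures viewed in $\mathbb{R}^d$, so after this reduction the argument above applies verbatim. I do not expect a genuine obstacle here; it is simply a matter of invoking the earlier results in the right order.
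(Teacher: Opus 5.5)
Your proposal is correct and is essentially the paper's argument. Theorem \ref{th:4.1} gives that ${\cal H}^s\lfloor_{e^{t_{s_k}}C^x}$ converges weakly to ${\cal H}^s\lfloor_F$, and Lemma \ref{le:2.12} (equivalently Definition \ref{de:2.10} with $c_i=c\,r_i^{-s}$, via your scaling identity) turns this into $c\,{\cal H}^s\lfloor_F\in\operatorname{Tan}(\mu,x)$; the only cosmetic difference is that you multiply by $c$ before invoking the lemma rather than after. Your hyperplane reduction is harmless but unnecessary, since ``$C$ as in Theorem \ref{th:4.1}'' already includes the hypothesis that $C$ lies in no hyperplane.
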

\begin{prof}
For any $F \in \omega({C^x})$, by Theorem \ref{th:4.1} and Lemma \ref{le:2.12}, we have ${{\cal H}^s}\lfloor_ F \in {\mathop{\rm Tan}\nolimits} (\mu ,x)$. \\
Hence by Definition \ref{de:2.10}, for any $c > 0$, ${c{\cal H}^s}\lfloor_ F \in {\mathop{\rm Tan}\nolimits} (\mu ,x)$.\\
Therefore, ${\mathop{\rm Tan}\nolimits} (\mu ,x) \supset \{ {c{\cal H}^s}\lfloor_ F| F \in \omega({C^x}),\mbox{c}>0\}.$\qed
\end{prof}\\
\textbf{Proof of Theorem \ref{th:1.1}} \quad 
 By Remark \ref{re:1.3}, we only need to consider the case where $C$ is not included in any hyperplane in ${{\mathbb{R}}^d}$.
 By Corollary \ref{co:4.3}, we only need to prove
  $${\mathop{\rm Tan}\nolimits} (\mu ,x) \subset \{ {c{\cal H}^s}\lfloor_ F|F \in \omega({C^x}),\mbox{c}> 0\}. $$
  For any $\nu \in {\mathop{\rm Tan}\nolimits} (\mu ,x)$, by Lemma \ref{le:2.12}, there exists $c>0$ and a sequence $ \{{r_k}\}$ such that ${r_k} \to 0$ as $k \to \infty$ and $c{{\cal H}^s}\lfloor_\frac{{{C^x}}}{{{r_k}}}\mathop  \to \limits^w \nu$.\\
  Since $\Gamma $ is sequentially compact (see Remark \ref{re:2.14}) and $\{\frac{{{C^x}}}{{{r_k}}}\}\subset \Gamma$, by Definition \ref{de:3.1} we know that
there exists $F  \in \omega({C^x})$ and a sub-sequence $\{ {r_{{j_k}}}\}$ of $\{ {r_k}\} $ such that
$$F = (\rho )\mathop {\lim }\limits_{k \to \infty }  \frac{{{C^x}}}{{{r_{j_k}}}} .$$
   By Theorem \ref{th:4.1}, there is a sub-sequence $\{ {r_{{s_k}}}\} $ of $\{ {r_{{j_k}}}\} $  such that
  $${{\cal H}^s}\lfloor_\frac{{{C^x}}}{{{r_{{s_k}}}}}\mathop  \to \limits^w {{\cal H}^s}\lfloor_ {F}.$$
  Since $c{{\cal H}^s}\lfloor_\frac{{{C^x}}}{{{r_{k}}}}\mathop  \to \limits^w \nu$, we have
  $$c{{\cal H}^s}\lfloor_\frac{{{C^x}}}{{{r_{{s_k}}}}}\mathop  \to \limits^w \nu.$$
  Because of the uniqueness of the weak convergence limit, $\nu=c{{\cal H}^s}\lfloor_ {F }$. Then we have
  $${\mathop{\rm Tan}\nolimits} (\mu ,x)\subset \{ {c{\cal H}^s}\lfloor_ F| F \in \omega({C^x}), c>0\}. $$
 This completes the proof.
\qed

\section{Proof of Theorem \ref{th:1.2}}
\begin{definition}\label{de:5.1}
	Let $\tau, s > 0$, a Borel measure $\mu$ on $\mathbb{R}^d$ is called \textbf{Ahlfors-David $s$-regular} (\textbf{$s$-regular} for short) if for all $x \in \operatorname{spt} \mu$ and $0 < r \leq \text{diam}(\operatorname{spt}\mu)$, the inequality
	$$
	\tau^{-1} \cdot r^s \leq \mu(B(x, r)) \leq \tau \cdot r^s
	$$
	holds, where $\text{diam}(\operatorname{spt} \mu)$ is the diameter of $\operatorname{spt}\mu$ and $B(x, r)$ is the closed ball with center $x$ and radius $r$.\\
	A closed set $G \subset \mathbb{R}^d$ is called \textbf{Ahlfors-David $s$-regular} (\textbf{$s$-regular} for short) if there exists a non-trivial $s$-regular measure $\mu$ such that $G = \operatorname{spt}\mu$.
\end{definition}

\begin{remark}\label{zu:5}
	(1). Assume the Borel measure $\mu$ is $s$-regular. Then there exists $\tau > 0$ such that for all $x \in \operatorname{spt} \mu$, we have
	$$
	0 < \tau^{-1} \cdot 2^{-s} \leq \theta_*^s(\mu, x) \leq \theta^{*s}(\mu, x) \leq \tau \cdot 2^{-s} < \infty.
	$$
	Furthermore, by (\cite{ref016}; Lemma 2.11), for any $s$-regular Radon measure $\mu$ on $\mathbb{R}^d$ and any $x \in \operatorname{spt} \mu$, we have
	$$
		\begin{aligned}
		\operatorname{Tan} (\mu ,x)= \{ \nu: \exists \; {r_i} \to {\rm{0}}, 0<c<\infty, c\mathop {{{\mu _{x,{r_i}}}} \over {{r_i}^s}}\mathop  \to \limits^w \nu\}.
		\end{aligned}
		$$
	(2). According to the mass distribution principle (\cite{ref17}; Proposition 2.2), a closed set $C$ is $s$-regular if and only if $\mathcal{H}^s\lfloor_C$ is $s$-regular. This can be regarded as an equivalent definition of $s$-regular sets. Additionally, it is evident that the Hausdorff dimension of any $s$-regular set is $s$ (see \cite{barany2023self}; Definition 2.1.4 and Proposition 2.1.5).\\
	(3). If a closed set $C$ is an $s$-regular set, then there exists $\tau > 0$ such that for all $x \in \operatorname{spt} \mu$,
	$$
	0 < \tau^{-1} \cdot 2^{-s} \leq \theta_*^s(C, x) \leq \theta^{*s}(C, x) \leq \tau \cdot 2^{-s} < \infty.
	$$
	(4). For any $s$-regular set $C$ in $\mathbb{R}^d$, $\mathcal{H}^s\lfloor_C$ is a Radon measure. This follows from the fact that $\mathcal{H}^s$ is Borel regular (\cite{ref10}; 4.5. Corollary), hence $\mathcal{H}^s\lfloor_C$ is also Borel regular. Since $\mathcal{H}^s\lfloor_C$ is $s$-regular, it is locally finite. Therefore, by (\cite{ref10}; 1.11. Corollary), $\mathcal{H}^s\lfloor_C$ is a Radon measure.\\
	(5). Any self-conformal set $F$ in $\mathbb{R}^d$ satisfying the weak separation condition is $s$-regular (\cite{angelevska2020self}; Theorem 3.1 and Proposition 3.3), where $s = \dim_H (F)$ and $\mu = \mathcal{H}^s\lfloor_F$. In particular, any self-similar set in $\mathbb{R}^d$ satisfying the open set condition and having Hausdorff dimension $s$ is also $s$-regular (\cite{barany2023self}; Theorem 4.1.2).
\end{remark}

\subsection{$\omega({C^x}) \supset \{\operatorname{spt}\nu|\nu  \in {\mathop{\rm Tan}\nolimits} (\mu ,x)\}$}

\begin{lemma}\label{le:5.1}
Given an $s$-regular Radon measure $\mu$ on $\mathbb{R}^d$ with $C = \operatorname{spt} \mu$. For any $x \in C$ and any $\nu \in \operatorname{Tan} (\mu, x)$, we have
	$
	\operatorname{spt}\nu \in \omega({C^x}).
	$
\end{lemma}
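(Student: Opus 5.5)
Fix $x\in C$ and $\nu\in\operatorname{Tan}(\mu,x)$. By Remark \ref{zu:5} (1) there are $r_i\to 0$ and $0<c<\infty$ with $\nu_i:=c\,\mu_{x,r_i}/r_i^s\xrightarrow{w}\nu$. Note that $\operatorname{spt}\nu_i=C^x/r_i$. By Remark \ref{re:2.14} the space $(\Gamma,\rho)$ is sequentially compact, so after passing to a subsequence we get $C^x/r_i\xrightarrow{\rho}F$ for some closed $F$. By Definition \ref{de:3.1}, $F\in\omega(C^x)$. Passing to a subsequence does not change the weak limit, so it remains to show $\operatorname{spt}\nu=F$. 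We check the two inclusions using the characterization of $\rho$-convergence in Lemma \ref{le:3.4}.

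\emph{Inclusion $F\subset\operatorname{spt}\nu$.} Let $z\in F$ and $\delta>0$. By Lemma \ref{le:3.4} (i) there are $z_i\in C^x/r_i$ with $z_i\to z$. Then $x+r_iz_i\in C$, and $s$-regularity gives
\[
\nu_i(B(z_i,\delta/2))=c\,r_i^{-s}\mu(B(x+r_iz_i,r_i\delta/2))\ge c\,\tau^{-1}(\delta/2)^s
\]
for large $i$, since $r_i\delta/2\le\operatorname{diam}C$ eventually. (If $C$ is a single point, $\mu$ is not $s$-regular with $s>0$ in a meaningful way; I would treat that degenerate case separately or note that it is excluded.) For large $i$ we have $B(z_i,\delta/2)\subset B(z,\delta)$. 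Weak convergence gives upper semicontinuity on compact sets, so
\[
\nu(B(z,\delta))\ge\limsup_i\nu_i(B(z,\delta))\ge c\tau^{-1}(\delta/2)^s>0.
\]
Alternatively, one can test against a continuous bump equal to $1$ on $B(z,\delta)$ and supported in $B(z,2\delta)$. Since $\delta$ is arbitrary, $z\in\operatorname{spt}\nu$.

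\emph{Inclusion $\operatorname{spt}\nu\subset F$.} Suppose $z\notin F$. Since $F$ is closed, there is $\delta>0$ with $\overline{B(z,2\delta)}\cap F=\emptyset$. I claim that $(C^x/r_i)\cap\overline{B(z,\delta)}=\emptyset$ for all large $i$. Otherwise some subsequence has points $w_{i_k}$ in it; these points lie in a compact ball, so a further subsequence converges to a point $w\in\overline{B(z,\delta)}$. By Lemma \ref{le:3.4} (ii), $w\in F$, which is a contradiction. Hence $\nu_i(U(z,\delta))=0$ for large $i$, where $U(z,\delta)$ denotes the open ball. Weak convergence gives lower semicontinuity on open sets, so $\nu(U(z,\delta))\le\liminf_i\nu_i(U(z,\delta))=0$. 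Thus $z\notin\operatorname{spt}\nu$.

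Together the two inclusions give $\operatorname{spt}\nu=F\in\omega(C^x)$. The main point needing care is the first inclusion. There the uniform lower regularity bound, applied at the moving centres $x+r_iz_i\in C$, must be transferred to the limit measure through the semicontinuity of weak limits. Some care is also needed with the range of radii $r\le\operatorname{diam}(\operatorname{spt}\mu)$ allowed in Definition \ref{de:5.1}.
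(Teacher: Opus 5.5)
Your proof is correct and uses the same two ingredients as the paper. One is testing the weak convergence on balls, which shows that points away from the set-limit carry no $\nu$-mass. The other is the uniform lower bound $\mu(B(y,r))\ge\tau^{-1}r^s$ applied at the moving centres $x+r_iz_i\in C$, which shows that points of the set-limit lie in $\operatorname{spt}\nu$; Lemma \ref{le:3.4} translates between point sequences and $\rho$-convergence in both arguments.

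The only difference is organizational. You first extract a $\rho$-convergent subsequence by compactness of $\Gamma$ and then identify its limit $F$ with $\operatorname{spt}\nu$. The paper instead checks conditions (i)--(ii) of Lemma \ref{le:3.4} directly for $\operatorname{spt}\nu$ along the full sequence $r_n$, which gives the slightly stronger Claim 1 that is reused in Lemma \ref{le:5.4}. Your version recovers that claim by the usual sub-subsequence argument.
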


\begin{prof}
For each $x \in C$ and any $\nu \in {\mathop{\rm Tan}\nolimits} (\mu ,x)$, there exists a sequence $\{ {r_n}\}$ and a positive number $c$ such that ${r_n} \to 0$ and $ c\mathop {{{\mu _{x,{r_n}}}} \over {{r_n}^s}}\mathop  \to \limits^w \nu$ 
 by Remark \ref{zu:5} (1).
 That is, 
 $$\mathop {\lim }\limits_{n \to \infty } {{{\rm{ }}c} \over {{r_n}^s}}\int {fd{\mu _{x,{r_n}}}}=\mathop {\lim }\limits_{n \to \infty } {{{\rm{ }}c} \over {{r_n}^s}}\int_{{{{C^x}} \over {{r_n}}}} {fd{\mu _{x,{r_n}}}}  = \int {fd\nu }\; for \; all \; f \in {C_c}({{\mathbb{R}^d}}),$$
where ${C_c}({{\mathbb{R}^d}})$ denotes the space of compactly supported continuous real-valued functions on ${{\mathbb{R}^d}}$.\\
To prove $\operatorname{spt}{\nu}\in \omega({C^x}),$ we only need to show that  $${{{C^x}} \over {{r_n}}}\mathop  \to \limits^{\rho} \operatorname{spt}{\nu}.$$
Next, we will use Lemma \ref{le:3.4} to complete the proof of the following claim.\\
\textbf{Claim 1} If $ c\mathop {{{\mu _{x,{r_n}}}} \over {{r_n}^s}}\mathop  \to \limits^w \nu$ with $0<c<\infty$ and ${r_n} \to 0$ as $n \to \infty $, then  ${{{C^x}} \over {{r_n}}}\mathop  \to \limits^{\rho} \operatorname{spt}{\nu}.$\\
(i) For any $y\in \operatorname{spt}{\nu},$ then $\nu(B(y,r))>0$ for any $r>0$. We shall form a sequence $\{y_n\}$ satisfying $y_n\in {{{C^x}} \over {{r_n}}}$ and $\mathop {\lim }\limits_{n \to \infty }y_n=y$. For any $r>0$, we can choose a function $f\in {C_c}({{\mathbb{R}^d}})$ satisfying $\operatorname{spt} f\subset B(y,r)$ and $\int {fd\nu  = \int_{B(y,r)} {fd\nu  > 0} }$. Hence
$$\mathop {\lim }\limits_{n \to \infty } {{{\rm{ }}c} \over {{r_n}^s}}\int_{{{{C^x}} \over {{r_n}}}} {fd{\mu _{x,{r_n}}}}= \mathop {\lim }\limits_{n \to \infty } {{{\rm{ }}c} \over {{r_n}^s}}\int_{{{{C^x}} \over {{r_n}}}\cap B(y,r)} {fd{\mu _{x,{r_n}}}} >0.$$
So ${\mu _{x,{r_n}}}({{{C^x}} \over {{r_n}}}\cap B(y,r))>0$ when $n$ is large enough.
That is, ${{{C^x}} \over {{r_n}}} \cap B(y,r) \ne \emptyset $ when $n$ is large enough.\\
Let $r = {1 \over k}, k = 1,2, \cdots $, there exists a positive integer ${n_k}$ such that ${{{C^x}} \over {{r_n}}} \cap B(y,{1 \over k}) \ne \emptyset $ for $n \ge {n_k}$.
Thus it is possible to form a sequnence $\{y_n\}, n\ge {n_1}$, satisfying  ${y_n}\in {{{C^x}} \over {{r_n}}} \cap B(y,{1 \over k}) $ for $n=n_k, n_k+1, \ldots, n_{k+1}-1$, and this sequence clearly converges toward $y$.\\
(ii) For any subsequence ${\{ {n_k}\} _{k \in {\mathbb{N_+}}}}$ and any sequence ${\{y_{{n_k}}\} _{k \in {\mathbb{N_+}}}}$ with ${y_{{n_k}}} \in {{{C^x}} \over {{r_{n_k}}}}$, suppose  $\{y_{{n_k}}\}$ converges, let $y=\mathop {\lim }\limits_{k \to \infty } {y_{{n_k}}}$. 
Now we prove that
$y\in \operatorname{spt}{\nu}$. \\
If $y \notin \operatorname{spt}{\nu}$, there exists a positive number $r$ such that ${\nu}(B(y,r))=0$.  We choose a function $f\in {C_c}({{\mathbb{R}^d}})$ satisfying $\operatorname{spt} f\subset B(y,r)$ and
$f(x)=1$ for $x\in B(y,r/2)$. Then 
$\int {fd\nu  = \int_{B(y,r)} {fd\nu  =0} }$. 
Hence 
\begin{equation}
	\begin{aligned}\label{eq1} 
0&=\mathop {\lim }\limits_{k \to \infty }{{{\rm{ }}c} \over {{r_{n_k}}^s}} \int_{{{{C^x}} \over {{r_{n_k}}}}} {fd{\mu _{x,{r_{n_k}}}}} \\
&\ge \mathop {\lim \sup }\limits_{k \to 0} {{{\rm{ }}c} \over {{r_{n_k}}^s}}\int_{{{{C^x}} \over {{r_{n_k}}}}\cap B(y,r/2)} {fd{\mu _{x,{r_{n_k}}}}}\\
	& =\mathop {\lim \sup }\limits_{k \to 0}{{{\rm{ }}c} \over {{r_{n_k}}^s}} {\mu _{x,{r_{n_k}}}}({{{C^x}} \over {{r_{n_k}}}}\cap B(y,r/2)).
	\end{aligned}
\end{equation}
In fact, we have $B({y_{{n_k}}},r/4) \subset B(y,r/2)$ when $k$ is large enough since  $y=\mathop {\lim }\limits_{k \to \infty } {y_{{n_k}}}$.\\
Hence by \eqref{eq1}, we have
	\begin{equation}\label{eq2} 
		\mathop {\lim \sup }\limits_{k \to 0} {{{\rm{ }}c} \over {{r_{n_k}}^s}}{\mu _{x,{r_{n_k}}}}({{{C^x}} \over {{r_{n_k}}}}\cap B({y_{{n_k}}},r/4))=0.
	\end{equation}
Since ${y_{{n_k}}} \in {{{C^x}} \over {{r_{n_k}}}}$,
there exists ${x_{{n_k}}} \in C$ such that ${y_{{n_k}}} = {{{x_{{n_k}}} - x} \over {{r_{{n_k}}}}}$.
Moreover, since $\mu$ is $s$-regular, there exists $\tau > 0$ such that for all $x\in C$ and $0 < r\leq \text{diam}(C)$, the following inequalities hold:
$${\tau^{ - 1}}\cdot{r^s} \leq \mu (B(x,r)) \leq \tau \cdot{r^s}.$$
Hence 
\begin{equation}
		\begin{aligned}
			{{{\rm{ }}c} \over {{r_{n_k}}^s}}{\mu _{x,{r_{n_k}}}}({{{C^x}} \over {{r_{n_k}}}}\cap B({y_{{n_k}}},r/4))&={{c{\mu}(C \cap B({x_{{n_k}}},r{r_{{n_k}}}/4))} \over {^{{r_{{n_k}}}^s}}}\\
			& =
			{c{\mu}(B({x_{{n_k}}},r{r_{{n_k}}}/4))\over {^{{r_{{n_k}}}^s}}}\\
			&\ge c\tau^{-1}{({{r} \over {4}})^s}.
		\end{aligned}
	\end{equation}
Since $c\tau^{-1}{({{r} \over {4}})^s}>0$, this contradicts equation \eqref{eq2}.
Hence $y\in \operatorname{spt}{\nu}$. \\
To sum up, by Lemma \ref{le:3.4}, we have
${{{C^x}} \over {{r_n}}}\mathop  \to \limits^{\rho} \operatorname{spt}{\nu}.$\qed
\end{prof}

\subsection{$\omega({C^x}) = \{\operatorname{spt} \nu|\nu  \in {\mathop{\rm Tan}\nolimits} (\mu ,x)\}$}

\begin{lemma}[\cite{ref10}; 1.23.Theorem]\label{le:5.3}
If ${\mu _1},{\mu _2}, \cdots $ are Radon measures on ${{\mathbb{R}^d}}$ with $$\mathop {\sup }\limits_{i \ge 1} \{ {\mu _i}(K)\}  < \infty $$ for all compact sets $K\subset{{\mathbb{R}^d}}$, then there is a weakly convergent subsequence of $\{ {\mu _i}\} $. 
\end{lemma}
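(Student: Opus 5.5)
The plan is the standard compactness argument: extract a subsequence along which the integrals of a countable dense family of test functions converge, extend the limit to all of ${C_c}({{\mathbb{R}^d}})$ using the uniform local mass bound, and then identify the limit functional as a Radon measure through the Riesz representation theorem.

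\textbf{Step 1: a countable dense family.} For $k\in{\mathbb{N}_+}$ fix $\psi_k\in{C_c}({{\mathbb{R}^d}})$ with $0\le\psi_k\le 1$, $\psi_k=1$ on $B(0,k)$ and $\operatorname{spt}\psi_k\subset B(0,k+1)$. Let $\mathcal{D}=\{p\,\psi_k: k\in{\mathbb{N}_+},\ p \text{ a polynomial with rational coefficients}\}$; this family is countable. Given $f\in{C_c}({{\mathbb{R}^d}})$ with $\operatorname{spt} f\subset B(0,k)$ and $\varepsilon>0$, the Weierstrass theorem on the compact ball $B(0,k+1)$ gives a rational polynomial $p$ with $|p-f|<\varepsilon$ there. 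Since $f=f\psi_k$, we get $\sup|p\psi_k-f|\le\sup_{B(0,k+1)}|p-f|<\varepsilon$. The approximant $p\psi_k$ is supported in the fixed compact set $K_k=B(0,k+1)$, which is what the estimates below require.

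\textbf{Step 2: diagonal extraction.} For $g\in\mathcal{D}$ with $\operatorname{spt} g\subset K_k$ we have
$$\Big|\int g\,d\mu_i\Big|\le \|g\|_\infty\sup_i\mu_i(K_k)<\infty,$$
so each sequence $\{\int g\,d\mu_i\}_i$ is bounded. Enumerate $\mathcal{D}=\{g_1,g_2,\dots\}$. Successively choose nested subsequences along which $\int g_j\,d\mu_i$ converges, and take the diagonal subsequence $\{\mu_{i_l}\}$; then $\lim_l\int g\,d\mu_{i_l}$ exists for every $g\in\mathcal{D}$.

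\textbf{Step 3: extension to all test functions.} Fix $f\in{C_c}({{\mathbb{R}^d}})$ with support in $B(0,k)$ and $\varepsilon>0$, and pick $g=p\psi_k\in\mathcal{D}$ with $\|f-g\|_\infty<\varepsilon$. Writing $M=\sup_i\mu_i(K_k)$, we have
$$\Big|\int f\,d\mu_{i_l}-\int f\,d\mu_{i_{l'}}\Big|\le 2\varepsilon M+\Big|\int g\,d\mu_{i_l}-\int g\,d\mu_{i_{l'}}\Big|.$$
Hence $\{\int f\,d\mu_{i_l}\}_l$ is Cauchy, and $L(f)=\lim_l\int f\,d\mu_{i_l}$ is well defined. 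The functional $L$ is linear and positive ($f\ge 0\Rightarrow L(f)\ge 0$), being a limit of positive linear functionals.

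\textbf{Step 4: identification of the limit.} By the Riesz representation theorem for positive linear functionals on ${C_c}({{\mathbb{R}^d}})$, there is a Radon measure $\mu$ with $L(f)=\int f\,d\mu$ for all $f\in{C_c}({{\mathbb{R}^d}})$. This says exactly that $\mu_{i_l}\mathop\to\limits^w\mu$ in the sense of Definition \ref{de:2.9}.

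\textbf{Main obstacle.} The only delicate point is Step 1. The dense family must consist of functions whose supports lie in one fixed compact set $K_k$ for each $f$, so that the uniform bound $\sup_i\mu_i(K_k)$ controls the approximation error uniformly in $i$. The cutoff $\psi_k$ is introduced precisely to guarantee this; the rest of the argument is routine.
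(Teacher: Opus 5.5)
Your proof is correct and is essentially the standard argument behind Mattila's Theorem 1.23, which the paper cites without giving a proof of its own: a countable test family with controlled supports (your cutoff $\psi_k$ supplies the support control that the uniform bound $\sup_i\mu_i(K)$ requires), a diagonal subsequence, and the Riesz representation theorem to identify the limit functional as a Radon measure.
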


\begin{lemma}\label{le:5.4}
Given an $s$-regular Radon measure $\mu$ on $\mathbb{R}^d$ with $C = \operatorname{spt}\mu$.
Then for any $x\in C$ and any $F\in\omega(C^x)$, there exists $\nu\in\mathop{\rm Tan}\nolimits(\mu,x)$ such that
$F = \operatorname{spt}\nu.$
\end{lemma}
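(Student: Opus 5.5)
Fix $x\in C$ and $F\in\omega(C^x)$. By Definition \ref{de:3.1} there is a sequence $r_n\to 0$ with $\frac{C^x}{r_n}\mathop\to\limits^\rho F$. The idea is to pass to a further subsequence along which the rescaled measures converge weakly, and then identify the support of the limit with $F$ using Lemma \ref{le:5.1}.

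\textbf{Step 1: uniform local bounds.} Consider the rescaled measures $\nu_n=\frac{\mu_{x,r_n}}{r_n^s}$. For any compact $K\subset B(0,R)$ we have
$$\nu_n(K)\le r_n^{-s}\mu(B(x,Rr_n))\le \tau R^s$$
by $s$-regularity (Definition \ref{de:5.1}), valid once $Rr_n\le \operatorname{diam}(C)$, i.e. for all large $n$. For the finitely many remaining $n$ the measures are Radon, hence finite on $K$. Thus $\sup_n\nu_n(K)<\infty$ for every compact $K$.

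\textbf{Step 2: extract a weak limit.} Lemma \ref{le:5.3} then gives a subsequence $\{r_{n_k}\}$ and a Radon measure $\nu$ with $\nu_{n_k}\mathop\to\limits^w\nu$. We must check that $\nu\neq 0$, so that it qualifies as a tangent measure. Take $f\in C_c(\mathbb{R}^d)$ with $0\le f\le 1$ and $f=1$ on $B(0,1)$. Then
$$\int f\,d\nu_{n_k}\ge r_{n_k}^{-s}\mu(B(x,r_{n_k}))\ge\tau^{-1},$$
so $\int f\,d\nu\ge\tau^{-1}>0$. Hence $\nu$ is nonzero, and with $c=1$ in Remark \ref{zu:5} (1) we get $\nu\in\operatorname{Tan}(\mu,x)$.

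\textbf{Step 3: identify the support.} Claim 1 in the proof of Lemma \ref{le:5.1}, applied along the subsequence $r_{n_k}$, gives $\frac{C^x}{r_{n_k}}\mathop\to\limits^\rho\operatorname{spt}\nu$. Every subsequence of a $\rho$-convergent sequence converges to the same limit, so also $\frac{C^x}{r_{n_k}}\mathop\to\limits^\rho F$. Since $(\Gamma,\rho)$ is Hausdorff (Remark \ref{re:2.14}), limits are unique, and therefore $F=\operatorname{spt}\nu$.

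\textbf{Main obstacle.} The only delicate point is the non-triviality of $\nu$ in Step 2; the lower regularity bound handles it. One should also note that the restriction $r\le\operatorname{diam}(C)$ in the regularity bounds is harmless because $r_n\to0$. If $C$ is a single point, that restriction becomes vacuous, and one instead uses the trivial Dirac computation.
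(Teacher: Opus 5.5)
Your proposal is correct and follows essentially the same route as the paper. You bound $\mu_{x,r_n}(K)/r_n^s$ uniformly using the upper $s$-regularity estimate, extract a weakly convergent subsequence via Lemma \ref{le:5.3}, and conclude $\operatorname{spt}\nu=F$ from Claim 1 of Lemma \ref{le:5.1} together with uniqueness of $\rho$-limits. Your additional checks (that $\nu\neq 0$ via the lower bound, the restriction $r\le\operatorname{diam}(C)$, and the one-point case) are details the paper leaves implicit, and you handle them correctly.
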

\begin{prof}
For any 
$F \in \omega({C^x})$, by Definition \ref{de:3.1}, there exists a sequence 
$\{r_n\}$ such that 
$$ F = (\rho )\mathop {\lim }\limits_{n \to \infty }{{{C^x}} \over {{r_n}}}.$$
Suppose $\{\mathop {{{\mu _{x,{r_n}}}} \over {{r_n}^s}}\}$ has a weakly convergent subsequence, by Remark \ref{zu:5} (1),
then there exists a subsequence $\{r_{{n_k}} \}\subset\{r_n\}$ and $\nu \in {\mathop{\rm Tan}\nolimits} (\mu ,x)$ such that 
	$\mathop {{{\mu _{x,{r_{n_k}}}}} \over {{r_{n_k}}^s}}\mathop  \to \limits^w \nu.$
By Claim 1 of Lemma \ref{le:5.1}, we have ${{{C^x}} \over {{r_{n_k}}}}\mathop  \to \limits^{\rho} \operatorname{spt}{\nu}$.\\
Since $ F = (\rho )\mathop {\lim }\limits_{n \to \infty }{{{C^x}} \over {{r_n}}}$, we have ${{{C^x}} \over {{r_{n_k}}}}\mathop  \to \limits^{\rho} F$.
Because of the uniqueness of the convergence limit by topology $\rho$, we obtain that $\operatorname{spt}\nu=F$.\\
Hence we only need to prove that
 $\{\mathop {{{\mu _{x,{r_n}}}} \over {{r_n}^s}}\}$ has a weakly convergent subsequence to obtain Lemma \ref{le:5.4}, see the following claim.\\
\textbf{Claim 1}
For any $F= (\rho )\mathop {\lim }\limits_{n \to \infty }{{{C^x}} \over {{r_n}}} \in \omega({C^x})$, then $\{\mathop {{{\mu _{x,{r_n}}}} \over {{r_n}^s}}\}$ has a weakly convergent subsequence.\\
Indeed, according to Lemma \ref{le:5.3}, we just need to prove that 
\begin{equation}\label{eq:5.5} 
\sup_{n\geq1}\left\{\frac{\mu_{x,r_n}}{r_n^s}(K)\right\}<\infty
\end{equation}
for all compact sets $K\subset{{\mathbb{R}^d}}$.\\
Below we provide the proof of \eqref{eq:5.5}.\\
For any compact set $K\subset{{\mathbb{R}^d}}$, there exists a positive number $l$ such that 
$K\subset B(0,l)$.  
Since $\mu$ is $s$-regular, there exists $\tau>0$ such that for all $x\in C$ and $0 < r\le diam(C)$, it holds that
$${\tau^{ - 1}}\cdot{r^s} \le \mu (B(x,r)) \le \tau \cdot{r^s}.$$ 
	Hence
	\begin{align*}
		\mathop {{{\mu _{x,{r_n}}}} \over {{r_n}^s}}(K) &\le \mathop {{{\mu _{x,{r_n}}}} \over {{r_n}^s}}(B(0,l))  \\
		&  = \mathop {{\mu(B(x,{r_n}l))} \over {{r_n}^s}} \\
		&  \le \tau \cdot{l^s}.
	\end{align*}
  So we have
  $\mathop {\sup }\limits_{n \ge 1} \{ \mathop {{{\mu _{x,{r_n}}}} \over {{r_n}^s}}(K)\}  \le \tau \cdot{l^s}< \infty$. The inequality
 \eqref{eq:5.5} is proved.\\
  This completes the proof of Lemma \ref{le:5.4}.\qed
\end{prof}

\textbf{Proof of Theorem \ref{th:1.2}}
\quad On the one hand, by Lemma \ref{le:5.1}, we know that
$$\{ \operatorname{spt}\nu|\nu  \in {\mathop{\rm Tan}\nolimits} (\mu ,x)\}  \subset \omega({C^x}).$$
On the other hand, by Lemma \ref{le:5.4}, we know that
$$\omega({C^x}) \subset \{ \operatorname{spt} \nu|\nu  \in {\mathop{\rm Tan}\nolimits} (\mu ,x)\}.$$
Hence $\omega({C^x})=\{ \operatorname{spt} \nu|\nu  \in {\mathop{\rm Tan}\nolimits} (\mu ,x)\}$.
\qed

\begin{corollary}\label{co:1.3}
Given any $s$-regular set $C$, let $\mu = {{\cal H}^s}\lfloor_C$. Then for any $x\in C$, we have
\begin{align}\label{E：6.2.3}
\omega({C^x})=\{\operatorname{spt} \nu|\nu\in {\mathop{\rm Tan}\nolimits} (\mu,x)\}.
\end{align}
\end{corollary}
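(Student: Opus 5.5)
This follows from Theorem \ref{th:1.2} once we know that $\mu={{\cal H}^s}\lfloor_C$ is an $s$-regular Radon measure whose support is exactly $C$. So the plan is to check these hypotheses and then invoke Theorem \ref{th:1.2}.

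The steps are as follows. By Remark \ref{zu:5} (2), since $C$ is an $s$-regular (closed) set, ${{\cal H}^s}\lfloor_C$ is $s$-regular. By Remark \ref{zu:5} (4), it is a Radon measure. It then remains to identify $\operatorname{spt}\mu$ with $C$.

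Suppose $y\in C$. By $s$-regularity, $\mu(B(y,r))\ge \tau^{-1}r^s>0$ for all small $r>0$, so $y\in\operatorname{spt}\mu$.

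Conversely, $\mu({\mathbb{R}^d}\backslash C)=0$ by definition of the restriction. Since $C$ is closed, minimality of the support gives $\operatorname{spt}\mu\subset C$.

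Hence $\operatorname{spt}\mu=C$, and Theorem \ref{th:1.2} applied to $\mu$ yields \eqref{E：6.2.3} for every $x\in C$.

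The only step needing any care is the identification $\operatorname{spt}\mu=C$. Specifically, one must check that the definition of an $s$-regular set (via some regular measure $\mu'$ with $\operatorname{spt}\mu'=C$) really produces the lower bound $\mathcal{H}^s\lfloor_C(B(y,r))\gtrsim r^s$ at every point of $C$. I would take this from the mass distribution principle as cited in Remark \ref{zu:5} (2): the upper bound $\mu'(B(y,r))\le\tau r^s$ gives $\mathcal{H}^s\lfloor_C(B(y,r))\gtrsim \mu'(B(y,r))\ge \tau^{-1}r^s$, which is positive.
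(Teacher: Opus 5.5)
Your proposal is correct and takes the same route as the paper. The paper's proof is a one-line deduction from Theorem \ref{th:1.2} via Remark \ref{zu:5} (the regularity/density facts for $C$ and the Radon property of ${\cal H}^s\lfloor_C$); you do the same, but also verify explicitly that $\operatorname{spt}\mu=C$, which the paper leaves implicit. You are right to get the lower bound at points of $C$ from the defining measure via the mass distribution principle: Definition \ref{de:5.1} only asserts the $s$-regularity bounds of $\mu$ at points of $\operatorname{spt}\mu$, so using them directly would be circular.
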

\begin{prof}
It can be directly deduced from Remark \ref{zu:5}(3)-(4) and Theorem \ref{th:1.2}.
\qed
\end{prof}

\begin{remark}
Corollary \ref{co:1.3} applies to some fractals that satisfy $s$-regular, such as self-similar sets satisfying the open set condition and self-conformal sets satisfying the weak separation condition.
\end{remark} 

  
\section*{Declaration of competing interest} 
The author declares that the publication of this paper has no conflict of interest.

\section*{Data availability} 
No data was used for the research described in the article.

\end{document}